\definecolor{color1}{RGB}{27,158,119}
\definecolor{color2}{RGB}{217,95,2}
\definecolor{color3}{RGB}{117,112,179}
\definecolor{color4}{RGB}{231,41,138}
\pgfplotsset{compat=1.18}
\newtheorem{theorem}{Theorem}[section]
\newtheorem{lem}[theorem]{Lemma}
\newtheorem{prop}[theorem]{Proposition}
\newtheorem{proposition}[theorem]{Proposition}
\newtheorem{cor}[theorem]{Corollary}
\newtheorem{corollary}[theorem]{Corollary}
\newtheorem{remark}[theorem]{Remark}
\theoremstyle{definition}
\def\R{\mathbb R}
\def\N{\mathbb N}
\def\eps{\varepsilon}
\def\H{\operatorname{H}}
\def\G{\mathcal{G}}
\DeclarePairedDelimiterX{\dual}[2]{\langle}{\rangle}{#1, #2}
\title[NLS Ground states on Quantum Graphs]{Ground States for the Defocusing Nonlinear Schrödinger Equation on Non-Compact Metric Graphs}
\author[E. Durand-Simonnet]{Elio Durand-Simonnet}
\address{Institut de Mathématiques de Toulouse ; UMR5219, Université de Toulouse ; CNRS, UPS IMT, F-31062 Toulouse Cedex 9 (France)}
\email{elio.durand\_simonnet@math.univ-toulouse.fr}
\author[B. Shakarov]{Boris Shakarov}
\address{Institut de Mathématiques de Toulouse ; UMR5219, Université de Toulouse ; CNRS, UPS IMT, F-31062 Toulouse Cedex 9 (France)}
\email{boris.shakarov@math.univ-toulouse.fr}
\thanks{The work of E.D.S. and B. S. is 
	partially supported by ANR project CIMI ANR-11-LABX-0040 and the ANR project NQG ANR-23-CE40-0005. E.D.S. is 
	partially supported by ANR project MINT ANR-18-EURE-0023}
\date{\today}
\subjclass[2010]{35Q55 (35A15, 35B38, 37K45)}
\date{\today}
\keywords{nonlinear Schr\"odinger equation, standing waves, action
	ground state, energy ground state, nonlinear quantum graphs}
\begin{document}
		
\begin{abstract}
   We investigate the existence and stability of ground states for the defocusing nonlinear Schrödinger equation on non-compact metric graphs. We establish a sharp criterion for the existence of action ground states in terms of the spectral properties of the underlying Hamiltonian: ground states exist if and only if the bottom of the spectrum is negative and the frequency lies within a suitable range. We further explore the relation between action and energy ground states, showing that while every action minimizer yields an energy minimizer, the converse fails in general. In particular, we prove that energy ground states may not exist for arbitrary masses. This discrepancy is illustrated through explicit examples on star graphs with $\delta$ and $\delta'$-type vertex conditions: in the mass-subcritical case, we exhibit a large interval of masses for which no energy minimizer exists, whereas in the supercritical regime, energy ground states exist for all masses.
\end{abstract}

\maketitle

% \textcolor{blue}{Possible journals: In order of my preference: JDE, Communications on Pure and Applied Analysis (Changzheng Qu)  Proceedings of the Royal Society of Edinburgh Section A: Mathematics (Michele Coti Zelati - Susana Gutierrez), bulletin of the london mathematical society, NoDEA (31), kyushu, CMS, Comptes rendus mathematique,  scimagojr.com,   imrn,  ,  annales de toulouse} 
%studies in applied mathematics
\section{Introduction}

Quantum graphs, which model wave propagation in networks, have received considerable attention in both mathematical physics and nonlinear analysis. In particular, the nonlinear Schrödinger equation (NLS) on quantum graphs provides a fundamental framework for studying nonlinear wave phenomena in branched structures, with applications in diverse areas such as optics \cite{Ag00, LeAs98} and condensed matter physics \cite{BiBaAl01, BrGiMa04, BuCaRaSo01}.

In this work, we investigate the defocusing nonlinear Schrödinger equation on metric graphs, where the underlying differential operator \( \H \) is the Laplacian endowed with general vertex conditions, as described in \cite[Chapter~1]{BeKu13} and detailed in Section~\ref{secPreliminaries}. More specifically, we study the defocusing NLS on a \emph{non-compact} metric graph \( \G \), meaning a graph composed of a finite set of edges \( \mathcal{E} \) and vertices \( \mathcal{V} \), with at least one edge of infinite length.

The operator \( \H \) acts as the Laplacian along each edge, i.e.,
\[
    \operatorname{H} \psi = (-\psi_e'')_{e \in \mathcal E},
\]
and is equipped with suitable vertex conditions at each vertex \( v \in \mathcal{V} \), drawn from the Shapiro–Lopatinski framework \cite{LiMa72}. We introduce the associated quadratic form
\begin{equation*}
    Q_{\H}(\psi) = \langle \operatorname{H} \psi, \psi \rangle_{L^2(\mathcal G)},
\end{equation*}
with domain \( D(Q_{\H}) \subset H^1(\G) \), depending on the vertex conditions. The precise definitions of \( \H \), as well as of the relevant Lebesgue and Sobolev spaces on \( \G \), are provided in Section~\ref{secPreliminaries}.

We consider the defocusing nonlinear Schrödinger equation
\begin{equation} \label{eqTimeDepNLS}
    \begin{cases}
        i \partial_t u = \operatorname{H} u + |u|^{p-1} u, \\
        u(t = 0) = u_0 \in D(Q_{\H}),
    \end{cases}
\end{equation}
for \( u : \mathbb{R} \times \mathcal{G} \to \mathbb{C} \) and exponent \( p > 1 \). The initial datum \( u_0 \) belongs to the energy space \( D(Q_{\H}) \), ensuring local well-posedness of \eqref{eqTimeDepNLS} via standard contraction arguments \cite{Ca03}. Global well-posedness follows from the conservation of two quantities, the \emph{energy} and the \emph{mass}, defined for all \( \psi \in D(Q_{\H}) \) by
\begin{equation}
    \label{eqEnMass}
    E(\psi) = \frac{1}{2} Q_{\H}(\psi) + \frac{1}{p + 1} \| \psi \|_{L^{p + 1} (\G)}^{p + 1}, 
    \qquad
    M(\psi) = \frac{1}{2} \| \psi \|_{L^2(\G)}^2.
\end{equation}

We will primarily focus on the associated stationary problem, obtained through the standard ansatz
\begin{equation*}  
    u(t) = e^{i \omega t} \psi,
\end{equation*}  
where \( \omega \in \mathbb{R} \) is the frequency parameter, and \( \psi \in D(Q_{\H}) \) is time-independent. Substituting this ansatz into \eqref{eqTimeDepNLS} reduces the problem to the stationary defocusing nonlinear Schrödinger equation
\begin{equation} \label{eqStationaryNLS}  
    \operatorname{H} \psi + \omega \psi + |\psi|^{p - 1} \psi = 0.
\end{equation}  
Equation \eqref{eqStationaryNLS} governs the standing wave solutions of the system.

These solutions can be characterized as critical points of the \textit{action} functional 
\[
    S_{\omega} (\psi) = E(\psi) + \omega M(\psi), \qquad \psi \in D(Q_{\H}),
\]
leading to the variational problem 
\begin{equation} \label{eqMinAction}
    s_{\omega} = \inf \left\{ S_{\omega} (\psi): \psi \in D(Q_{\H})\right\}.
\end{equation}
If a minimizer exists, it satisfies $S'_{\omega} (\psi) = 0$, i.e., equation \eqref{eqStationaryNLS}. We denote the set of all minimizers to \eqref{eqMinAction} by 
\begin{equation}\label{eqActionSet}
    \mathcal{A}_\omega = \{ \psi \in D(Q_{\H}) : S_\omega(\psi) = s_\omega\},
\end{equation}
and refer to them as \emph{action ground states}. 

An alternative approach to \eqref{eqStationaryNLS} is via the \emph{energy minimization problem} at fixed mass $m > 0$:
\begin{equation}\label{eqEnMinProb}
    e_m = \inf \{ E(\psi) : \psi \in D(Q_{\H}), \, M(\psi) = m \}.
\end{equation}
Here the parameter $\omega$ in \eqref{eqStationaryNLS} arises as a Lagrange multiplier. A natural question is whether the infimum $e_m$ is attained, i.e., whether there exists a function \( \psi \in D(Q_{\H}) \) such that $M(\psi) = m$ and $E(\psi) = e_m$. We refer to such minimizers as \textit{energy ground states} and denote the set by
\begin{equation} \label{eqBDefEnGs}
    \mathcal{B}_m =  \{ \psi \in D(Q_{\H}) : E(\psi) = e_m, \ M(\psi) = m \}.
\end{equation}

Our primary objective in this work is to establish a necessary and sufficient condition for the existence of action ground states. Before proceeding with our analysis, we first review the relevant literature on the subject.

Quantum graphs provide a simplified one-dimensional model for studying wave propagation in fiber-like structures. Their effectiveness in approximating higher-dimensional counterparts has been investigated in \cite{Ko00, Ko02}. In \cite{TeTzVi14}, a connection was established between energy/mass minimizers on product spaces and ground states on  $\mathbb{R}^d \times \mathcal M$ where $\mathcal M$ is a compact manifold.
The Gross-Pitaevskii equation on  $\mathbb{R} \times \mathbb{T}$ with a general nonlinearity was recently studied in \cite{deGrSm24, MaMu24}, where it was shown that energy/momentum minimizers become effectively one-dimensional as the torus  $\mathbb{T}$  shrinks to zero. The bifurcation between a one-dimensional line soliton and a fully two-dimensional soliton on the cylinder  $\mathbb{R} \times \mathbb{T}$  was analyzed in \cite{AkBaIbKi24, Ya14, Ya15} More recently in \cite{CoSh24} a line with a  $\delta$-potential was obtained as the limiting case of a fractured strip $\R \times [0,L]$ with Neumann boundary conditions. 

The study of the nonlinear Schrödinger equation on quantum graphs has gained significant attention over the past two decades, particularly in the case of the focusing nonlinearity (where the nonlinear term appears with a minus sign). In this setting, numerous works have investigated the existence of standing waves and their stability. The first contribution can be reconnected to the case of the line with a delta potential, as an example of a star-graph with two edges. In this case, the works \cite{FuJe08, FuOhOz08,GoHoWe04} establish the existence of action ground states and their stability,  and in \cite{CoFuFi08} the full picture of stability and instability is provided for different choices of parameters in the equation. For the energy minimizers, the existence of energy ground states is proven in \cite{AdNoVi13} on the line with a quadratic form which includes those considered here. 

Similar results were later obtained in more general graphs. Notably, the classical concentration compactness principle of Lions \cite{Li85} is slightly modified for the graphs. Specifically, in addition to the three standard behaviors of vanishing, compactness, and dichotomy, an additional possible behavior must be accounted for, the \textit{run-away}, where mass escapes to infinity on one infinite edge, see \cite{AdCaFiNo14}. This gives rise to a general criterion of existence for both energy and action ground states on graphs. Roughly speaking, if the energy (or the action) of the line ground state is superior to one of the possible candidates for the minimization problem, then a ground state exists as the \textit{run-away} behavior can be ruled out. Then vanishing and dichotomy are ruled out due to the convexity of the energy. Due to this criterion, the existence of energy ground states is proven in \cite{AdCaFiNo14,AdCaFiNo14A, AdSeTi15, AdSeTi17, BoCa23, CoDoGaSeTr23}.
For a general recent review on the focusing NLS on quantum graphs, we refer to \cite{KaNoPe22}.

In contrast, the defocusing case has received significantly less attention, despite its mathematical and physical relevance \cite{PiSt16}. The present work aims to contribute to bridging this gap.

In  $\mathbb{R}$, it is well known that for \( \operatorname{H} = -\partial_{xx} \), $H^1(\R)$ solutions to \eqref{eqStationaryNLS} do not exist. However, in bounded domains, solutions to the defocusing stationary equation do exist under various boundary conditions. This follows from the compact Sobolev embedding \( H^1(I) \hookrightarrow L^p(I) \) for any  $p \in [2, \infty]$  and classical variational methods, which will be discussed later in this work (see, for example \cite{GuCoTs17}).

Quantum graphs lie at the interface of these two settings, combining features of both bounded and unbounded domains. This makes them an interesting framework to investigate. To the best of our knowledge, the defocusing stationary NLS on graphs has been investigated in only two works. In \cite{KaOh09}, the problem was studied on the real line with a negative delta potential, where existence and uniqueness (up to a phase shift) of solutions to \eqref{eqStationaryNLS} were established. This result was later extended in \cite{PaGo18} to star graphs. In both cases, the analysis relies crucially on the availability of explicit solutions, made possible by the symmetry of the star-graph and the absence of finite-length edges. In contrast, the present work is carried out in a general setting, where no explicit solutions are available, symmetries cannot be exploited, and the presence of a nontrivial compact core requires a different approach.

Moreover, to the best of our knowledge, this is the first work that investigates the nonlinear Schrödinger equation under such a general class of vertex conditions, even when compared to the extensive literature on the focusing case. Previous studies have almost exclusively concentrated on Kirchhoff or $\delta$-type couplings, leaving the role of more general self-adjoint conditions essentially unexplored.

Given the scarcity of results in the defocusing case and under general vertex conditions, our aim is to advance the theory by considering the NLS on a general non-compact quantum graph endowed with arbitrary vertex conditions, as discussed in Section~\ref{secPreliminaries}. To illustrate the abstract results, we will then analyze in detail two specific examples in Section~\ref{secStarGraphs}, namely star graphs with $\delta$- and $\delta'$-type vertex conditions.

The defocusing case presents several important differences compared to the focusing one. In particular, the minimization of the action functional does not require additional constraints, such as restricting to the Nehari manifold. As a consequence, the search for action minimizers is more straightforward than the identification of energy ground states.

Furthermore, since the vertex conditions act locally, the main difficulty in the construction of an action ground state reduces to ruling out the vanishing scenario through appropriate scaling arguments. In this framework, the existence criterion for action ground states simplifies to the sharp condition
\[
    s_\omega < 0.
\]

Since the only possible negative contribution to the action functional comes from the quadratic form, the existence criterion requires a careful balance between the choice of vertex conditions and the frequency parameter $\omega$. In particular, if we denote by $l_{\H}$ the bottom of the spectrum of $\operatorname{H}$, namely
\begin{equation} \label{eqLGamOm}
    l_{\H} = \inf \left\{ \left\langle \operatorname{H} \psi, \psi \right\rangle_{L^2(\mathcal G)} : \psi \in D(Q_{\H}), \ \| \psi \|_{L^2(\G)} = 1 \right\},
\end{equation}
then a necessary condition for the existence of action minimizers is that $l_{\H} < 0$. 

Our main result shows that this spectral condition is also sufficient, provided the frequency is chosen in the appropriate range. More precisely, as long as $\omega \in (0, -l_{\H})$, the existence of action ground states is guaranteed. Moreover, in the borderline case $\omega = 0$, the same conclusion holds for subcritical nonlinearities $p \in (1,5)$.

\begin{theorem} \label{thmCriterion}
    Let $\G$ be a non-compact metric graph and $\omega \neq 0$. Then an action ground state exists if and only if $l_{\H} < 0$ and $\omega \in (0, -l_{\H})$. In addition, for $\omega = 0$ and $p \in (1,5)$, an action ground state exists under the same  condition $l_{\H} < 0$.
\end{theorem}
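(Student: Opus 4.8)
The plan is to prove the sharp equivalence by reducing the existence of a \emph{nontrivial} action ground state to the two-sided condition $-\infty < s_\omega < 0$, and then translating each inequality into a spectral statement about $l_{\H}$ and $\omega$. First I would record the elementary reduction. Since $S_\omega(0)=0$ we always have $s_\omega\le 0$. If $\psi_*$ minimizes and $s_\omega=0$, then the scalar function $g(t):=S_\omega(t\psi_*)=\tfrac{t^2}{2}\bigl(Q_{\H}(\psi_*)+\omega\|\psi_*\|_{L^2(\G)}^2\bigr)+\tfrac{t^{p+1}}{p+1}\|\psi_*\|_{L^{p+1}(\G)}^{p+1}$ is nonnegative on $[0,\infty)$ with $g(1)=0$; if $\|\psi_*\|_{L^{p+1}(\G)}>0$ the quadratic coefficient would be forced negative, making $g(t)<0$ for small $t$, a contradiction. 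Hence $s_\omega=0$ admits only the trivial minimizer, and a nontrivial action ground state exists if and only if $s_\omega$ is finite and strictly negative.

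Next I would characterize $s_\omega<0$. Dropping the nonnegative term $\tfrac{1}{p+1}\|\psi\|_{L^{p+1}(\G)}^{p+1}$ shows that $s_\omega<0$ forces some $\psi$ with $Q_{\H}(\psi)+\omega\|\psi\|_{L^2(\G)}^2<0$, i.e. $l_{\H}<-\omega$; conversely, given such a $\psi$, evaluating $S_\omega$ on $\eps\psi$ and using $p+1>2$ makes the negative quadratic term dominate for small $\eps$, so $s_\omega<0$. Thus, for every $\omega$, $s_\omega<0\iff l_{\H}<-\omega$. For finiteness I would show that $\omega<0$ gives $s_\omega=-\infty$ by spreading a fixed small-amplitude bump along an infinite edge away from all vertices: the dilation $\psi(\cdot/\lambda)$ keeps $Q_{\H}=\|\psi'\|_{L^2(\G)}^2/\lambda\to0$ while $\tfrac{\omega}{2}\|\psi\|_{L^2(\G)}^2+\tfrac{1}{p+1}\|\psi\|_{L^{p+1}(\G)}^{p+1}$ scales like $\lambda$ and is negative once the amplitude is chosen small. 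For $\omega\ge0$, boundedness below follows from a coercivity lemma: using that there are finitely many vertices and the sharp one-dimensional trace inequality on a bounded collar of each vertex, one obtains $Q_{\H}(\psi)\ge c_0\|\psi'\|_{L^2(\G)}^2-C_0\|\psi\|_{L^2(K)}^2$ for a fixed \emph{bounded} set $K$; since $K$ is bounded, Hölder followed by Young's inequality (valid because $p+1>2$) absorbs $\|\psi\|_{L^2(K)}^2$ into a fraction of $\tfrac{1}{p+1}\|\psi\|_{L^{p+1}(\G)}^{p+1}$, yielding $S_\omega(\psi)\ge \tfrac{c_0}{2}\|\psi'\|_{L^2(\G)}^2+\tfrac{\omega}{2}\|\psi\|_{L^2(\G)}^2+\tfrac{1}{2(p+1)}\|\psi\|_{L^{p+1}(\G)}^{p+1}-C_1$. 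Combining these facts gives $-\infty<s_\omega<0\iff \omega\ge0$ and $\omega<-l_{\H}$, which for $\omega\neq0$ is precisely $l_{\H}<0$ and $0<\omega<-l_{\H}$; this establishes both directions of the criterion, leaving only attainment.

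For attainment when $\omega>0$ I would run the direct method. The coercivity bound shows any minimizing sequence $(\psi_n)$ is bounded in $H^1(\G)$, so up to a subsequence $\psi_n\rightharpoonup\psi_*$ weakly in $H^1$. The functional is weakly lower semicontinuous: $\|\psi'\|_{L^2(\G)}^2$ and the $L^2$, $L^{p+1}$ norms are weakly lsc, while the vertex terms depend only on the point values $\psi(v)$, which converge because the trace map is a bounded functional on $H^1$ of each edge and hence weakly continuous. This gives $S_\omega(\psi_*)\le s_\omega$, so it remains only to exclude $\psi_*=0$, the \emph{vanishing} scenario. If $\psi_*=0$, then the vertex terms — the only possibly negative contribution to $S_\omega$ — tend to $0$ since $\psi_n(v)\to0$, whereas the Dirichlet energy, the $L^2$ part (here $\omega>0$) and the $L^{p+1}$ part are all nonnegative; hence $\liminf_n S_\omega(\psi_n)\ge0$, contradicting $s_\omega<0$. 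Therefore $\psi_*\neq0$ is a genuine minimizer. I expect this exclusion of vanishing — the statement that the only source of negative action is the compact core carrying the negative spectrum of $\H$ — to be the main obstacle, as it is exactly where the non-compactness of $\G$ and the vertex conditions interact.

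Finally, the borderline case $\omega=0$ needs separate treatment, because the coercivity lemma then controls $\|\psi_n'\|_{L^2(\G)}$ and $\|\psi_n\|_{L^{p+1}(\G)}$ but not $\|\psi_n\|_{L^2(\G)}$, so a minimizing sequence may lose $L^2$-compactness by letting mass run away to infinity. Here I would use the subcriticality $p\in(1,5)$: since $p+1$ lies below the one-dimensional mass-critical exponent, the Gagliardo–Nirenberg inequality lets one trade the uncontrolled $L^2$ norm against the bounded $\dot H^1$ and $L^{p+1}$ norms, so that a far-field truncation (which only adds to the nonnegative tail of $S_0$) produces an $L^2$-bounded minimizing sequence without raising the action above $s_0$. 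The direct method and the vanishing exclusion of the previous paragraph then apply verbatim to yield a minimizer, completing the proof.
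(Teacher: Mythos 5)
Your treatment of $\omega \neq 0$ is correct and is essentially the paper's own proof: coercivity of $S_\omega$ via vertex-trace estimates absorbed by H\"older--Young (Lemma \ref{lem2}), a dilation along an infinite edge showing $s_\omega = -\infty$ for $\omega < 0$ (Lemma \ref{lemOmNeg}), the equivalence $s_\omega < 0 \iff \omega < -l_{\H}$ (Lemma \ref{lem4}; your version with a near-optimal test function for $l_{\H}$ even avoids the paper's case split between discrete and essential spectrum), and the direct method with compactness of vertex traces (Lemmas \ref{lemStrongConv} and \ref{lemExisGSAc}).

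The case $\omega = 0$, $p \in (1,5)$, however, contains a genuine gap. You propose to use Gagliardo--Nirenberg to ``trade the uncontrolled $L^2$ norm against the bounded $\dot{H}^1$ and $L^{p+1}$ norms''. No inequality of this kind exists: \eqref{eqGN} controls $L^q$ by $L^2$ and $\dot{H}^1$, not the reverse, and a scaling count shows that any bound $\|\psi\|_{L^2(\G)}^2 \le C \|\psi'\|_{L^2(\G)}^a \|\psi\|_{L^{p+1}(\G)}^b$ consistent with dilations and amplitude rescalings would force $a = -2(p-1)/(p+3) < 0$. Concretely, plateaus on an infinite edge of height $N^{-1/(p+1)}$ and length $N$ (with unit ramps) have $\|\psi\|_{L^{p+1}(\G)} \sim 1$ and $\|\psi'\|_{L^2(\G)} \to 0$, yet $\|\psi\|_{L^2(\G)}^2 \sim N^{(p-1)/(p+1)} \to \infty$. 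The far-field truncation does not repair this: cutting off each $\psi_n$ at a radius $R_n$ depending on $n$ does produce a minimizing sequence of compactly supported, hence $L^2$, functions without raising the action, but with no \emph{uniform} $L^2$ bound; so the direct method cannot ``apply verbatim'', the weak limit exists only in $\dot{H}^1(\G) \cap L^{p+1}(\G)$, and nothing at this level places it in $L^2(\G)$.

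The missing idea is the one the paper uses in Lemma \ref{lemNew}: minimize over the relaxed reflexive space $\dot{H}^1(\G) \cap L^{p+1}(\G)$, note that the resulting minimizer solves the Euler--Lagrange equation \eqref{eqStationaryNLS} with $\omega = 0$, and invoke the classification of decaying zero-energy solutions on a half-line: they are exactly the algebraic profiles \eqref{eqSol2}, decaying like $x^{-2/(p-1)}$, which belong to $L^2(\R^+)$ precisely when $p < 5$. So the threshold $5$ enters through the integrability of this tail, not through mass-criticality in the Gagliardo--Nirenberg sense; the numerical coincidence with the critical exponent is just that. That this ODE step cannot be bypassed by a purely compactness-based argument is confirmed by Theorem \ref{thmCritCase}: for $p \ge 5$ the relaxed problem behaves the same way, yet no nontrivial $H^1(\G)$ solution can live on an infinite edge, so any scheme that does not see the decay dichotomy would ``prove'' too much.
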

In other words, ground states exist precisely when the Hamiltonian $\H$ admits a bound state, and the frequency $\omega$ is chosen within the corresponding spectral gap.

For the case $\omega = 0$ and $p \geq 5$, the situation becomes more delicate: the existence of a minimizer is no longer guaranteed, and a general existence criterion is difficult to formulate, as it depends sensitively on the type and distribution of vertex conditions across the graph. Nevertheless, we show that if a minimizer exists, it must vanish identically on every infinite edge—that is, it necessarily has compact support and is supported only on edges of finite length. This phenomenon is, in fact, a general feature of solutions to \eqref{eqStationaryNLS} in this regime: when $p \geq 5$ and $\omega = 0$, any solution must be entirely supported within the compact core.

\begin{theorem}\label{thmCritCase}
Let $\G$ be a non-compact graph and $l_{\H} < 0$, $\omega = 0$ and $p \geq 5$. Then any non-trivial solution $u\in D(Q_{\H})$ of \eqref{eqStationaryNLS} must vanish identically on every infinite edge of the graph. In particular, $u$ is supported on finite-length edges.
\end{theorem}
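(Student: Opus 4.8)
The plan is to argue edge by edge: since both the hypotheses and the conclusion are local on each infinite edge, I would fix one infinite edge, identify it with the half-line $[0,\infty)$, and show the restriction of $u$ vanishes there. Because $u \in D(Q_{\H}) \subset H^1(\G)$, its restriction to this edge satisfies $u, u' \in L^2([0,\infty))$ and $u(x) \to 0$ as $x \to \infty$; moreover, by elliptic bootstrapping (using $u \in H^1 \hookrightarrow C \cap L^\infty$ on the edge, so that $|u|^{p-1}u$ is continuous), $u$ is classical on the open edge and solves the ODE $u'' = |u|^{p-1}u$ pointwise, which is \eqref{eqStationaryNLS} with $\omega = 0$.

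First I would exploit the Hamiltonian structure of this ODE. Setting
\[
    \mathcal{E}(x) = \tfrac12 |u'(x)|^2 - \tfrac{1}{p+1}|u(x)|^{p+1},
\]
a direct differentiation using $u'' = |u|^{p-1}u$ shows $\mathcal{E}' \equiv 0$ (the two contributions, both equal to $|u|^{p-1}\operatorname{Re}(\bar u' u)$, cancel), so $\mathcal{E}$ is constant along the edge; crucially this computation is insensitive to the complex phase of $u$. To pin down the value of the constant, I would rewrite $|u'|^2 = 2\mathcal{E} + \tfrac{2}{p+1}|u|^{p+1}$ and let $x \to \infty$: since $|u| \to 0$ we get $|u'(x)|^2 \to 2\mathcal{E}$, and because $u' \in L^2([0,\infty))$ this limit must vanish. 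Hence $\mathcal{E} = 0$ and
\[
    |u'(x)| = \sqrt{\tfrac{2}{p+1}}\,|u(x)|^{(p+1)/2} \qquad \text{for all } x .
\]

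Next, assuming $u \not\equiv 0$, I would reach a contradiction with $u \in L^2$ through a decay lower bound. The identity above forbids interior zeros: if $u(x_0)=0$ then $u'(x_0)=0$, and since $p \geq 5$ makes the nonlinearity locally Lipschitz, uniqueness for the initial value problem forces $u \equiv 0$. Thus $\rho := |u| > 0$ is smooth, and from $\rho' = \operatorname{Re}(\bar u u')/\rho$ together with $|\operatorname{Re}(\bar u u')| \le |u|\,|u'|$ one obtains the one-sided differential inequality $\rho' \geq -c\,\rho^{(p+1)/2}$ with $c = \sqrt{2/(p+1)}$. Integrating $\frac{d}{dx}\rho^{-(p-1)/2} \leq \frac{p-1}{2}c$ yields the lower bound $\rho(x) \geq \bigl(\rho(0)^{-(p-1)/2} + \tfrac{p-1}{2}c\,x\bigr)^{-2/(p-1)}$, so $\rho(x) \gtrsim (1+x)^{-2/(p-1)}$ for large $x$.

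The proof then closes on an exponent count: $\int^\infty \rho^2 \gtrsim \int^\infty (1+x)^{-4/(p-1)}\,dx$ diverges precisely when $4/(p-1) \leq 1$, i.e. when $p \geq 5$, contradicting $\rho = |u| \in L^2([0,\infty))$. Therefore $u \equiv 0$ on each infinite edge, which is the assertion; the claim that $u$ is supported on finite-length edges is then immediate. I expect the main obstacle to be the rigorous identification $\mathcal{E} = 0$ — that is, converting membership in $H^1$ into the vanishing of the conserved energy — and, relatedly, making the decay estimate robust to the complex-valued, possibly sign-changing nature of $u$ (handled above by passing to $\rho = |u|$ and using only the one-sided inequality $\rho' \ge -c\,\rho^{(p+1)/2}$). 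The threshold $p=5$ emerges naturally as the exponent at which the algebraically decaying orbit connecting to zero just fails to be square-integrable.
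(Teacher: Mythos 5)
Your proof is correct, and it takes a genuinely different route from the paper's. The paper proceeds via classification: any $H^1$ solution decays on an infinite edge, the only nontrivial decaying solutions of $u''=|u|^{p-1}u$ on a half-line are, up to a phase shift, the explicit algebraic profile \eqref{eqSol2} (proved as in \cite{KaOh09}), and a direct computation shows this profile is not in $L^2(\R^+)$ precisely when $p\geq 5$. You avoid the classification entirely: conservation of the ODE energy $\mathcal E=\tfrac12|u'|^2-\tfrac1{p+1}|u|^{p+1}$, combined with $u,u'\in L^2$, forces $\mathcal E=0$; the resulting identity $|u'|=\sqrt{2/(p+1)}\,|u|^{(p+1)/2}$ excludes interior zeros (by ODE uniqueness, the nonlinearity being locally Lipschitz for $p\geq5$) and gives the one-sided inequality $\rho'\geq -c\,\rho^{(p+1)/2}$ for $\rho=|u|$, hence the decay lower bound $\rho(x)\gtrsim (1+x)^{-2/(p-1)}$, which contradicts $\rho\in L^2$ exactly when $4/(p-1)\leq 1$, i.e. $p\geq5$. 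Your argument is more self-contained --- it needs no uniqueness theorem and handles complex, sign-changing solutions directly through $\rho=|u|$ rather than through phase-shift uniqueness --- at the cost of not producing the explicit profile, which the paper's route yields as a byproduct and reuses in Lemma \ref{lemNew} (existence for $p<5$) and in Section \ref{secStarGraphs}. Both proofs ultimately rest on the same mechanism: the zero-energy orbit connecting to $0$ decays like $x^{-2/(p-1)}$, which is square-integrable if and only if $p<5$; your differential-inequality argument is a clean way of extracting exactly that rate without solving the ODE.
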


Thus, in most cases, the existence of action ground states for the nonlinear model reduces to analyzing the spectral properties of the Hamiltonian operator $\H$.  For star graphs, a criterion for the existence of a negative eigenvalue of the Hamiltonian is provided in \cite[Theorem 3.7]{KoSc06}. In the case of graphs containing at least one compact edge, \cite[Theorem 3.2]{KoSc06} establishes a criterion for determining whether a given scalar  $l \in \mathbb{C}$  is an eigenvalue of \( \operatorname{H} \). 

As a consequence of the conservation of the action, we will also obtain the stability of the set $\mathcal{A}_\omega$ for the equation \eqref{eqTimeDepNLS}. A set $\mathcal A$ is said to be \textit{stable} if for any $\varepsilon > 0$, there exists $\eta > 0$ such that if $u_0 \in D(Q_{\H})$ satisfies 
    $$ 
    \inf_{\psi \in \mathcal{A}} \| u_0 - \psi \|_{H^1(\mathcal G)} < \eta,
    $$ 
    then the corresponding solution $u$ to \eqref{eqTimeDepNLS} satisfies
\[
    \sup_{t \geq 0} \inf_{\psi \in \mathcal{A}} \| u(t, \cdot) - \psi \|_{H^1(\mathcal G)} < \varepsilon.
\] 
Finally, we will also show that the function $\omega \mapsto s_\omega$
%defined for $\omega \in (0, -l_{\H}) $
is continuous and strictly increasing.
\begin{cor}\label{corStability}
   Let $l_{\H}<0$ and $\omega \in [0, -l_{\operatorname{H}})$ for $p \in (1,5)$, or $\omega \in (0,-l_{\H})$ for $p \geq 5$. Then the function $\omega \mapsto s_\omega$ is continuous and strictly increasing, and the set $\mathcal{A}_\omega$ is stable.
\end{cor}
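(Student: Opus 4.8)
The plan is to handle the three claims separately: the monotonicity and continuity of $\omega \mapsto s_\omega$ follow from the fact that $s_\omega$ is a concave function of $\omega$, together with the existence of non-trivial minimizers granted by Theorem~\ref{thmCriterion}, while the stability of $\mathcal{A}_\omega$ reduces to a standard Cazenave--Lions argument powered by the compactness of minimizing sequences already used to prove existence.

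For the first part, I would start from the observation that for each fixed $\psi \in D(Q_{\H})$ the map $\omega \mapsto S_\omega(\psi) = E(\psi) + \omega M(\psi)$ is affine with slope $M(\psi) \geq 0$. Since $s_\omega$ is the pointwise infimum over $\psi$ of this family of affine functions, it is concave, and therefore continuous on the open interval $(0,-l_{\H})$. Monotonicity in the wide sense is immediate, as $\omega_1 < \omega_2$ forces $S_{\omega_1}(\psi) \leq S_{\omega_2}(\psi)$ for every $\psi$ and hence $s_{\omega_1} \leq s_{\omega_2}$. To make this strict I would test with a ground state $\psi_2 \in \mathcal{A}_{\omega_2}$, which exists by Theorem~\ref{thmCriterion}, obtaining $s_{\omega_1} \leq S_{\omega_1}(\psi_2) = s_{\omega_2} - (\omega_2-\omega_1)M(\psi_2)$; since $s_{\omega_2} < 0 = S_{\omega_2}(0)$ the minimizer $\psi_2$ cannot be trivial, so $M(\psi_2) > 0$ and the inequality is strict. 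Continuity at the left endpoint $\omega = 0$ in the subcritical case is then recovered by squeezing: testing with $\psi_0 \in \mathcal{A}_0$ gives $s_\omega \leq s_0 + \omega M(\psi_0)$, so $\limsup_{\omega \to 0^+} s_\omega \leq s_0$, while monotonicity yields $s_\omega \geq s_0$ for $\omega > 0$, and the two bounds force $\lim_{\omega \to 0^+} s_\omega = s_0$.

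For the stability of $\mathcal{A}_\omega$ I would argue by contradiction. The crucial ingredient, which I expect to be the real obstacle, is the compactness statement that every minimizing sequence for $s_\omega$ admits a subsequence converging strongly in $H^1(\G)$ to an element of $\mathcal{A}_\omega$, equivalently $\mathrm{dist}_{H^1}(\psi_n,\mathcal{A}_\omega) \to 0$ whenever $S_\omega(\psi_n) \to s_\omega$. This is exactly the relative compactness of minimizing sequences established in the proof of Theorem~\ref{thmCriterion}, where the strict inequality $s_\omega < 0$ rules out both vanishing and the escape of mass along the infinite edges; the only point requiring care is to record it for arbitrary minimizing sequences and to upgrade weak to strong $H^1$ convergence, which in this Hilbert setting follows from convergence of norms. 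Granting this, suppose $\mathcal{A}_\omega$ were unstable. Then there would exist $\varepsilon_0 > 0$, initial data $u_0^n$ with $\mathrm{dist}_{H^1}(u_0^n,\mathcal{A}_\omega) \to 0$, and times $t_n$ such that $\mathrm{dist}_{H^1}(u^n(t_n),\mathcal{A}_\omega) \geq \varepsilon_0$, where $u^n$ denotes the global solution of \eqref{eqTimeDepNLS} with datum $u_0^n$. Since $S_\omega$ is continuous on $D(Q_{\H})$ and equals $s_\omega$ on $\mathcal{A}_\omega$, we get $S_\omega(u_0^n) \to s_\omega$, and since $E$ and $M$---hence $S_\omega = E + \omega M$---are conserved along the flow, $S_\omega(u^n(t_n)) = S_\omega(u_0^n) \to s_\omega$. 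Thus $(u^n(t_n))_n$ is a minimizing sequence, so the compactness input forces $\mathrm{dist}_{H^1}(u^n(t_n),\mathcal{A}_\omega) \to 0$, contradicting the lower bound $\varepsilon_0$. This contradiction yields the stability of $\mathcal{A}_\omega$ and completes the proof.
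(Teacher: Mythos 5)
Your proposal is correct, and for the stability claim it coincides with the paper's own argument (Lemma \ref{lemStabGSAc}): assuming instability, conservation of $E$ and $M$ turns the sequence $u^n(t_n)$ into a minimizing sequence for $s_\omega$, and the strong $H^1(\G)$-compactness of minimizing sequences extracted from the proof of Lemma \ref{lemExisGSAc} gives the contradiction. Where you genuinely differ is in the treatment of $\omega \mapsto s_\omega$ (the paper's Lemma \ref{prpSOmega}). The paper takes minimizers $\phi_{\omega_1}$, $\phi_{\omega_2}$ at \emph{both} frequencies and reads strict monotonicity and continuity off the chain \eqref{eqSComparison1}, $S_{\omega_1}(\phi_{\omega_1}) < S_{\omega_1}(\phi_{\omega_2}) < S_{\omega_2}(\phi_{\omega_2}) < S_{\omega_2}(\phi_{\omega_1})$, justifying strictness by the remark that equality would force one profile to solve \eqref{eqStationaryNLS} for two distinct frequencies, and continuity from $|s_{\omega_2}-s_{\omega_1}| \leq |\omega_2-\omega_1|\,\|\phi_{\omega_1}\|_{L^2(\G)}^2$. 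You instead obtain interior continuity for free from concavity ($s_\omega$ is a pointwise infimum of affine functions with nonnegative slopes), handle the endpoint $\omega=0$ by the squeeze $s_0 \leq s_\omega \leq s_0 + \omega M(\psi_0)$, and get strictness by testing only with the minimizer at the \emph{larger} frequency, strictness coming simply from $M(\psi_2)>0$ (nontriviality forced by $s_{\omega_2}<0$). Your route is more elementary and needs less input: continuity requires no existence of minimizers at all, and strict monotonicity requires a minimizer at one frequency only, avoiding the paper's Euler--Lagrange/two-frequency argument; what the paper's route additionally delivers is the quantitative local Lipschitz bound and the identity \eqref{eqSOmegaPot}, neither of which the corollary itself needs. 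One caveat you share with the paper: at $\omega=0$, $p\in(1,5)$, the $H^1$-compactness of arbitrary minimizing sequences invoked for stability of $\mathcal{A}_0$ is not literally covered by Lemma \ref{lemExisGSAc}, whose $L^2$-bound uses $\omega>0$; the paper glosses over this point as well, so your proposal is at the same level of rigor there.
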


In contrast to action ground states, the existence of energy ground states for the defocusing NLS is significantly more challenging. Notably, the energy functional is no longer convex for all masses. In some cases, the minimization problem \eqref{eqEnMinProb} satisfies  $e_m > -\infty$, yet no minimizer exists due to the possibility of dichotomy in the concentration-compactness argument. In other words, an energy minimizer may not exist for a given mass  $m > 0$, as its existence depends on the structure of the graph and the parameters in equation \eqref{eqStationaryNLS}. A concrete example illustrating this phenomenon will be presented in Section \ref{secStarGraphs}. 

Another key difference from the focusing case lies in the relationship between action and energy ground states. In the focusing case, energy minimizers typically also minimize the action. However, the conditions under which the reverse holds remain unclear. For further discussion on this, we refer to \cite{DeDoGaSe23, DoSeTi23, JeLu22}.

In our setting, the situation is reversed: every action ground state is also an energy ground state. However, the converse is more difficult to establish, as the relationship between the parameter  $\omega$  and the mass of action ground states remains unclear. As a first step in addressing this issue, we demonstrate that this question is linked to the uniqueness of energy minimizers. Specifically, we prove that if two distinct energy minimizers exist and have two different associated Lagrangian multipliers, then at least one of them cannot be an action minimizer.

\begin{theorem} \label{thmExisEnGs}
Let \( \omega \in [0, - l_{\H}) \) and  $\phi \in \mathcal{A}_\omega$  with  $m = M(\phi)$, then  $\phi \in \mathcal{B}_m$. Moreover, if for  $m > 0$, there exist  $\psi, \phi \in \mathcal{B}_m$ such that  $\psi \neq \phi$  and satisfying \eqref{eqStationaryNLS} with respectively $\omega_\psi, \omega_\phi$ such that $\omega_\psi = \omega_\phi$, then either  $\psi \notin \mathcal{A}_{\omega_\psi}$ or  $\phi \notin \mathcal{A}_{\omega_\phi}$.
\end{theorem}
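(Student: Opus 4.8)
The plan is to treat the two assertions separately. The first is essentially immediate from the definitions, while the second rests on the convexity structure specific to the defocusing problem. For the first claim, I would argue directly from the splitting $S_\omega = E + \omega M$. Fix $\phi \in \mathcal{A}_\omega$ and set $m = M(\phi)$. For an arbitrary competitor $\psi \in D(Q_{\H})$ lying on the mass shell $\{M = m\}$, the global minimality of $\phi$ for the action gives $S_\omega(\phi) \le S_\omega(\psi)$; subtracting the common term $\omega m = \omega M(\phi) = \omega M(\psi)$ yields $E(\phi) \le E(\psi)$. Since $\psi$ was arbitrary with $M(\psi)=m$, this shows $E(\phi) = e_m$ and hence $\phi \in \mathcal{B}_m$. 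No sign restriction on $\omega$ is needed here; the hypothesis $\omega \in [0,-l_{\H})$ serves only to ensure, via Theorem~\ref{thmCriterion}, that $\mathcal{A}_\omega$ is nonempty.

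For the second claim I would argue by contradiction, assuming $\psi \in \mathcal{A}_{\omega_\psi}$ and $\phi \in \mathcal{A}_{\omega_\phi}$ with common value $\omega := \omega_\psi = \omega_\phi$, so that $S_\omega(\psi) = S_\omega(\phi) = s_\omega$ and $M(\psi) = M(\phi) = m$. After replacing $\psi,\phi$ by $|\psi|,|\phi|$—which are again action ground states—I may assume both are nonnegative. The heart of the argument is displacement-type convexity in the mass density: writing $\rho_t = (1-t)\psi^2 + t\phi^2$ and $u_t = \sqrt{\rho_t} \in D(Q_{\H})$ for $t \in [0,1]$, the identity $\|u_t\|_{L^2(\G)}^2 = (1-t)\|\psi\|_{L^2(\G)}^2 + t\|\phi\|_{L^2(\G)}^2$ keeps the mass fixed at $m$ along the whole path, so the term $\omega M(u_t)=\omega m$ is constant. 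The nonlinear term is controlled by the strict convexity of $s \mapsto s^{(p+1)/2}$ on $[0,\infty)$, valid since $p > 1$:
\[
  \|u_t\|_{L^{p+1}(\G)}^{p+1} = \int_{\G} \rho_t^{(p+1)/2}\,dx \le (1-t)\,\|\psi\|_{L^{p+1}(\G)}^{p+1} + t\,\|\phi\|_{L^{p+1}(\G)}^{p+1},
\]
with equality precisely when $\psi = \phi$ almost everywhere. The quadratic form would be handled by the convexity of $\rho \mapsto Q_{\H}(\sqrt{\rho})$, whose edge contribution is the classical convexity of the Fisher information $\int |(\sqrt{\rho})'|^2$. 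Granting all three bounds, one obtains $S_\omega(u_t) \le (1-t)s_\omega + t\,s_\omega = s_\omega$; minimality of $s_\omega$ forces equality for every $t$, and the strict inequality above then collapses to $|\psi| = |\phi|$. A final phase-rigidity step—writing the solutions as $\sqrt{\rho}\,e^{\ii\theta}$, showing the associated current vanishes and the phase is globally constant by connectedness and the vertex conditions—identifies $\psi$ and $\phi$ up to a constant phase, contradicting their distinctness as two energy ground states.

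The main obstacle is the quadratic-form step together with the two reductions it relies on. Under the general self-adjoint vertex conditions considered here, $Q_{\H}$ carries boundary contributions at the vertices that are neither automatically convex nor monotone as functions of the density: off-diagonal couplings produce geometric-mean terms $\sqrt{\rho_e(v)\rho_{e'}(v)}$, which are concave and hence push the interpolation inequality the wrong way. Establishing $Q_{\H}(u_t) \le (1-t)Q_{\H}(\psi) + t\,Q_{\H}(\phi)$ therefore requires exploiting the precise sign structure of the admissible conditions—transparent for $\delta$-type couplings, where the vertex term is linear in the density, but delicate in general. The same structural input is what underlies both the diamagnetic inequality $Q_{\H}(|\psi|) \le Q_{\H}(\psi)$ used in the reduction to nonnegative representatives and the phase-rigidity argument on the graph, and it is in pinning down these vertex-level estimates that the real work lies.
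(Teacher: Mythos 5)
Your first claim and its proof are correct and coincide with the paper's own argument (Lemma~\ref{lemAcToEn}): on the mass shell $\{M=m\}$ the action and the energy differ by the constant $\omega m$, so minimality of $S_\omega$ transfers directly to minimality of $E$.

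The second part does not work, for two distinct reasons. First, an issue of interpretation that matters: the hypothesis ``$\omega_\psi=\omega_\phi$'' in the statement is a misprint. Both the introduction (``two distinct energy minimizers \dots\ two different associated Lagrangian multipliers'') and the result the paper actually proves (Lemma~\ref{prpProperties1}) assume $\omega_\psi\neq\omega_\phi$. Read literally, with equal multipliers, the claim is false: take $l_{\H}<0$, $\omega\in(0,-l_{\H})$, and $\phi\in\mathcal A_\omega$ (nonempty by Theorem~\ref{thmCriterion}), so that $\phi\in\mathcal B_m$ with $m=M(\phi)>0$; then $\psi=e^{i\theta}\phi$ with $\theta\neq 0$ is a second element of $\mathcal B_m$, distinct from $\phi$, solving \eqref{eqStationaryNLS} with the same frequency, and it \emph{is} an action minimizer, since $S_\omega$, $\mathcal A_\omega$, $\mathcal B_m$ and \eqref{eqStationaryNLS} are all invariant under constant phases. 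Your proposed proof collides with exactly this obstruction: even if every interpolation step were justified, the endpoint of your argument is that $\psi$ and $\phi$ agree up to a constant phase, which is perfectly compatible with $\psi\neq\phi$, so no contradiction is ever reached. Second, the analytic core of your scheme is an admitted hole: under the general self-adjoint vertex conditions of the paper, neither the diamagnetic-type reduction $Q_{\H}(|\psi|)\le Q_{\H}(\psi)$ nor the convexity $Q_{\H}\bigl(\sqrt{\rho_t}\bigr)\le(1-t)Q_{\H}(\psi)+t\,Q_{\H}(\phi)$ is available. Indeed $D(Q_{\H})$ need not even be stable under $\psi\mapsto|\psi|$ (consider a Dirichlet-type condition such as $\psi_{e_1}(v)+\psi_{e_2}(v)=0$), and the off-diagonal entries of $\Lambda_v$ produce terms of the form $\sqrt{\rho_e(v)\rho_{e'}(v)}$, concave in the density, which break the interpolation inequality when their coefficients have the unfavorable sign. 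So the argument is incomplete even for the (false) literal statement.

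For the intended hypothesis $\omega_\psi\neq\omega_\phi$ (say $\omega_\psi<\omega_\phi$), the paper's proof is a short comparison argument that avoids all of this machinery. Since $E(\psi)=E(\phi)=e_m$ and $M(\psi)=M(\phi)=m$, one has $S_{\omega_\phi}(\phi)-S_{\omega_\psi}(\psi)=(\omega_\phi-\omega_\psi)\,M(\psi)$. If both functions were action minimizers, the left-hand side would equal $s_{\omega_\phi}-s_{\omega_\psi}$; but the strict chain \eqref{eqSComparison1} — strict because no single nontrivial profile can solve \eqref{eqStationaryNLS} for two different frequencies — gives $s_{\omega_\phi}<S_{\omega_\phi}(\psi)=s_{\omega_\psi}+(\omega_\phi-\omega_\psi)\,M(\psi)$, a contradiction. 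You should correct the hypothesis and run this two-line comparison; the equal-multiplier version can at best hold modulo phase invariance and cannot be proved as stated.
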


In Section \ref{secCriterion}, we will prove Theorems \ref{thmCriterion} and \ref{thmExisEnGs}, along with additional properties of ground states. Section \ref{secStarGraphs} then focuses on the specific case of a star graph with  $\delta $ or $\delta'$-vertex conditions. Section \ref{secStarGraphs} serves two main purposes.

First, in this setting, explicit solutions to \eqref{eqStationaryNLS} are available and are unique up to a phase shift. The explicit solution for the $\delta$-vertex conditions was derived in \cite{PaGo18}, while we will compute the solution for the $\delta'$-vertex conditions.

Second, exploiting these explicit solutions, we will demonstrate that for  $p < 5$, there exists a large interval of masses for which no energy ground state exists. Specifically, we will show that there exists a threshold $m_p$ with $m_p = \infty$ for $p \geq 5$ and $m_p < \infty$ for $p\in (1,5)$ such that energy ground states exist only for  $m\in (0,m_p)$. This is shown in Proposition \ref{prpSec4Main}.   

The remainder of this paper is organized as follows. In Section \ref{secPreliminaries}, we introduce the necessary notation and preliminaries, including the spectral properties of the operator $\H$ and the well-posedness of problem \eqref{eqTimeDepNLS}. In Section \ref{secCriterion}, we establish the proof of Theorem \ref{thmCriterion} and Theorem \ref{thmCritCase}, providing a rigorous criterion for the existence of ground states. Finally, in Section \ref{secStarGraphs}, we focus on the specific case of the star graph with $\delta$ and $\delta'$-vertex conditions, which is of particular interest as it allows for the construction of explicit solutions, providing deeper insights into the analytical properties of ground states.

\section{Preliminaries} \label{secPreliminaries}

\subsection{Metric graphs}

In this paper, we consider a metric graph $\mathcal G = (\mathcal E, \mathcal V)$, with $\mathcal E$ the set of edges and $\mathcal V$ the set of vertices.  The sets $\mathcal E$ and $\mathcal V$ will always be supposed of finite cardinality. For $v \in \mathcal V$, we denote by $J_v$ the set of edges adjacent to $v$ and by $d_v$ the cardinal of $J_v$. 
An edge $e \in \mathcal E$ has a length $L_e$ that can be finite or infinite. The coordinate system on the metric graph is largely arbitrary. When the length $L_e$ is finite, the edge  $e \in \mathcal E$ can be identified with a different choice for an interval $I_e \subset \R$, as $I_e = [0, L_e]$.  When the length $L_e$ is infinite, the edge can be identified with any half-line $I_e = [a,\infty)$, where $a \in \R$ can be chosen freely. We highlight that throughout this work, every graph $\G$ considered has at least one infinite edge. 
For later convenience, we define $L$ as the minimum between half of the minimum length between every edge and $1/2$ 
\begin{equation} \label{eqMinLenght}
    L = \frac{1}{2} \min \left( \min_{e \in \mathcal E} L_e, 1 \right).
\end{equation}
This quantity is well defined and $L >0$ since $|\mathcal E| < \infty$. A point $\tilde{x} \in \G$ on the graph can be identified by giving the edge $e \in \mathcal{E}$ and the coordinate $x$ on the edge: $\tilde{x} = (e, x)$. With slight abuse of notation, we will use $x$ for both a point on the graph and a coordinate on one edge when there is no ambiguity.  

A function on the metric graph is defined component-wise, giving its value on any edge. Assigning  $\psi$  on  $\G$  corresponds to specifying its edge components  $\{\psi_e\}_{e \in E}$, where each  $\psi_e$  is a function  $\psi_e: I_e \to \mathbb{C}$. Equivalently, for a point  $\tilde x = (e, x)$  on the graph, we define  $\psi(\tilde x) = \psi_e(x)$. Since a general metric graph  $\G$  is described by a collection of points and intervals, a natural choice of metric structure on  $\G$  is given by the Lebesgue spaces defined on these intervals. In particular, if a function $\psi$ is integrable, then every component $\psi_e$ on the edge is integrable on $I_e$ and
\[
    \int_{\G} \psi (x) dx = \sum_{e \in \mathcal E} \int_{I_e} \psi_e (x) dx.
\]
For $p \geq 2$ and  $k \in \N$, this allows to define the Lebesgue and Sobolev spaces 
\begin{equation*}
    L^p (\G) = \bigoplus_{e \in \mathcal E} L^p(I_e), \quad H^k (\G) = \bigoplus_{e \in \mathcal E} H^k(I_e) 
\end{equation*}
endowed with the norms given by 
\begin{equation*}
    \left\| \psi \right\|_{L^p (\mathcal G)}^p = \sum_{e \in \mathcal E} \left\| \psi_e \right\|_{L^p(I_e)}^p, \quad \left\| \psi \right\|_{H^k (\mathcal G)}^2 = \sum_{e \in \mathcal E} \left\| \psi_e \right\|_{H^k(I_e)}^2,
\end{equation*}
and the scalar product on $L^2(\G)$ as
\[
    \langle \psi, \phi \rangle_{L^2 (\mathcal G)} = \operatorname{Re} \left( \int_{\G} \psi (x) \overline{\phi(x)} dx \right) = \sum_{e \in \mathcal E} \operatorname{Re} \left( \int_{I_e} \psi_e(x) \overline{\phi_e(x)} dx \right).
\]
For $\psi \in H^1(\mathcal G), v \in \mathcal V$ and $e \in J_v$, we denote by $\psi_e(v)$ the value of $\psi_e$ at the vertex $v$ and by $\Psi(v) \in \mathbb C^{d_v}$ the vector given by
\[
    \Psi(v) = (\psi_{e_1} (v), \ldots, \psi_{e_{d_v}} (v)),
\]
where $\{ e_1, \dots, e_{d_v} \} = J_v$. For $q \in [2,\infty]$ and $\psi \in H^1(\mathcal G)$, the Gagliardo–Nirenberg inequality
\begin{equation} \label{eqGN}
    \| \psi \|_{L^q (\mathcal G)} \leq C \| \psi' \|_{L^2 (\mathcal G)}^{\frac{1}{2}-\frac{1}{q}} \| \psi \|_{L^2 (\mathcal G)}^{\frac{1}{2}+\frac{1}{q}}
\end{equation}
holds for some $C > 0$ \cite{AdSeTi16}.

\subsection{Hamiltonian operator}

We equip the metric graph $\mathcal G$ with the operator $\operatorname{H} : L^2(\mathcal G) \to L^2(\mathcal G)$, called the \emph{Hamiltonian operator}, acting as
\[
    \operatorname{H} \psi = (-\psi_e'')_{e \in \mathcal E}
\]
and defined on a subdomain of $H^2(\mathcal G)$ including local vertex conditions of the form
\begin{equation*} \label{eqVertexConditions}
    A_v \Psi(v) + B_v \Psi'(v) = 0
\end{equation*}
for all $v \in \mathcal V$, where $A_v$ and $B_v$ are $(d_v \times d_v)$-matrices. The domain of $\operatorname{H}$ is given by
\[
    D(\operatorname{H}) = \left\{ \psi \in H^2(\mathcal G): A_v \Psi(v) + B_v \Psi'(v) = 0 \text{ for all } v \in \mathcal V \right\}.
\]
A comprehensive introduction to the Hamiltonian operator is given in \cite[Chapter 1]{BeKu13}. In particular, a self-adjointness criterion for $\operatorname{H}$ is the following. 
\begin{proposition}\label{prpHSelfAdj}
    The following assertions are equivalent:
    \begin{enumerate}
        \item The operator $\operatorname{H}$ is self-adjoint.
        \item For every $v \in \mathcal V$, the matrices $A_v$ and $B_v$ satisfy the following conditions: the $(d_v \times 2d_v)$-matrix $(A_v, B_v)$ has maximal rank and the matrix $A_v B_v$ is symmetric.
        \item For every $v \in \mathcal V$, there are three orthogonal and mutually orthogonal projectors $P_{D,v}, P_{N,v}$ and $P_{R,v} = I_{d_v} - P_{D,v} - P_{N,v}$ acting on $\mathbb C^{d_v}$ and an invertible symmetric matrix $\Lambda_v$ acting on the subspace $P_{R,v} \mathbb C^{d_v}$ such that, for $\psi \in D(\operatorname{H})$,
        \[
            P_{D,v} \Psi(v) = 0,
            \quad P_{N,v} \Psi'(v) = 0,
            \quad P_{R,v} \Psi'(v) = \Lambda_v P_{R,v} \Psi(v).
        \]
    \end{enumerate}
\end{proposition}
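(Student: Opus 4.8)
The plan is to reduce the whole statement to a finite-dimensional symplectic linear-algebra fact at each vertex, the bridge being Green's identity. \emph{Green's identity.} For $\psi,\phi\in H^2(\G)$ (the maximal domain, before imposing vertex conditions), integrating by parts twice on each edge and collecting the boundary contributions at the vertices yields
\[
  \langle \H\psi,\phi\rangle_{L^2(\G)}-\langle\psi,\H\phi\rangle_{L^2(\G)}
  =\sum_{v\in\mathcal V}\Big(\langle\Psi(v),\Phi'(v)\rangle_{\C^{d_v}}-\langle\Psi'(v),\Phi(v)\rangle_{\C^{d_v}}\Big),
\]
where $\Psi'(v)$ is the vector of inward normal derivatives at $v$ and $\langle\cdot,\cdot\rangle_{\C^{d_v}}$ the Hermitian product. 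I would record this boundary form and introduce on $\C^{2d_v}$ the form $\omega_v((x,y),(x',y'))=\langle x,y'\rangle-\langle y,x'\rangle$. A standard trace/extension argument (each edge can be prescribed arbitrary endpoint values and derivatives, e.g.\ via Hermite interpolants) shows that $\psi\mapsto(\Psi(v),\Psi'(v))_{v\in\mathcal V}$ maps $H^2(\G)$ \emph{onto} $\prod_v\C^{2d_v}$; hence the image of $D(\H)$ under this map is exactly $\prod_v\L_v$, where $\L_v=\ker(A_v,B_v)\subset\C^{2d_v}$.

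\emph{Step 1: $(1)\Leftrightarrow(2)$.} By the identity above, $\H$ is symmetric iff each $\L_v$ is isotropic for $\omega_v$, and $\H$ is self-adjoint iff each $\L_v$ is \emph{Lagrangian}, i.e.\ maximal isotropic. Indeed, the adjoint again acts as $-d^2/dx^2$ with domain $\{\phi\in H^2(\G):(\Phi(v),\Phi'(v))\in\L_v^{\omega_v}\ \forall v\}$, writing $\L_v^{\omega_v}$ for the $\omega_v$-orthogonal complement, so $D(\H^*)=D(\H)$ precisely when $\L_v=\L_v^{\omega_v}$ for all $v$. Now $\dim\L_v=2d_v-\operatorname{rank}(A_v,B_v)$, so the half-dimension $\dim\L_v=d_v$ required for the Lagrangian property is equivalent to $(A_v,B_v)$ having maximal rank $d_v$; and, for a subspace of this dimension, the isotropy $\langle x,y'\rangle=\langle y,x'\rangle$ for all $(x,y),(x',y')\in\L_v$ translates, after expressing $\L_v$ through $(A_v,B_v)$, into the symmetry condition on $A_vB_v^*$ of (2). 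This gives the equivalence.

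\emph{Step 2: $(2)\Leftrightarrow(3)$.} This is a normal-form statement for the Lagrangian subspace $\L_v\subset\C^{2d_v}$, which I would prove directly. Set
\[
  N_v=\{x\in\C^{d_v}:(x,0)\in\L_v\},\qquad D_v=\{y\in\C^{d_v}:(0,y)\in\L_v\}.
\]
Isotropy applied to $(x,0)$ and $(0,y)$ gives $\langle x,y\rangle=0$, so $N_v\perp D_v$; let $P_{N,v},P_{D,v}$ be the orthogonal projections onto $N_v,D_v$ and $P_{R,v}=I_{d_v}-P_{N,v}-P_{D,v}$, with $R_v:=\operatorname{ran}P_{R,v}$. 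A dimension count using $\dim\L_v=d_v$ shows that on $R_v$ the subspace $\L_v$ is the graph of a linear map $\Lambda_v$, i.e.\ $P_{R,v}\Psi'(v)=\Lambda_v P_{R,v}\Psi(v)$, while the $D_v$-components force $P_{D,v}\Psi(v)=0$ and the $N_v$-components force $P_{N,v}\Psi'(v)=0$; isotropy restricted to $R_v$ makes $\Lambda_v$ symmetric, and its invertibility follows since $\Lambda_v x=0$ would force $(x,0)\in\L_v$, whence $x\in N_v\cap R_v=\{0\}$. This yields (3). Conversely, given the projectors and $\Lambda_v$ as in (3), the matrices $A_v=P_{D,v}-\Lambda_v P_{R,v}$ and $B_v=P_{N,v}+P_{R,v}$ reproduce the same subspace, have maximal rank, and satisfy the symmetry condition, giving (2).

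\emph{Main obstacle.} The genuinely analytic point, and the one I expect to be most delicate, is Step 1: upgrading symmetry to self-adjointness requires the surjectivity of the vertex trace map on $H^2(\G)$ together with the identification of $D(\H^*)$ as the $\omega_v$-orthogonal complements, i.e.\ verifying that no boundary data is lost in passing to the adjoint. Once the Lagrangian reformulation is in place, the remaining content of Steps 1 and 2 is the purely algebraic fact that a subspace of $\C^{2d_v}$ is Lagrangian iff it is isotropic of dimension $d_v$, and that every such subspace admits the Dirichlet–Neumann–Robin decomposition constructed above.
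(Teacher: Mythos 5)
The paper itself offers no proof of Proposition~\ref{prpHSelfAdj}: it is quoted as a known criterion, with a pointer to \cite[Chapter~1]{BeKu13}. Your proposal is, in substance, a correct reconstruction of the proof given in that reference (going back to Kostrykin--Schrader): Green's identity reduces self-adjointness to a statement about the vertex spaces $\L_v=\ker(A_v,B_v)\subset\C^{2d_v}$ relative to the Hermitian-symplectic boundary form $\omega_v$; self-adjointness becomes the Lagrangian property of each $\L_v$; and condition (3) is the Dirichlet--Neumann--Robin normal form of a Lagrangian subspace. Your Step 2 is complete and correct: the orthogonality $N_v\perp D_v$, the vanishing conditions $P_{D,v}\Psi(v)=0$ and $P_{N,v}\Psi'(v)=0$ (both are isotropy tested against $(n,0)$ and $(0,d)$), the graph property over $R_v$ by dimension count, self-adjointness of $\Lambda_v$ from isotropy, invertibility from $N_v\cap R_v=\{0\}$, and the converse via $A_v=P_{D,v}-\Lambda_vP_{R,v}$, $B_v=P_{N,v}+P_{R,v}$ all check out.

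Two points deserve attention. First, your algebra produces the condition ``$A_vB_v^*$ is self-adjoint'', whereas item (2) as printed in the paper reads ``$A_vB_v$ is symmetric''. Yours is the correct condition: for the paper's own $\delta$-coupling matrices \eqref{eqDeltaMatrix} one computes that $A B^*$ is the diagonal matrix with single nonzero entry $-\gamma$, hence self-adjoint, while $AB$ has row $K-1$ equal to $(-1,\dots,-1)$ and is not symmetric, even though $\operatorname{H}_\delta$ is self-adjoint; so the printed statement must be read with $B_v^*$, and your proof establishes the corrected version. Second, the step you assert rather than prove---that $\operatorname{H}^*$ again acts as $-d^2/dx^2$ with domain contained in $H^2(\G)$---is precisely where the analysis lives, and it should be written out: test against $C_c^\infty$ functions supported in the open edges to get $\phi''\in L^2$ edgewise for every $\phi\in D(\operatorname{H}^*)$, hence $\phi\in H^2(\G)$; only then do Green's identity and the surjectivity of the trace map identify $D(\operatorname{H}^*)$ with the boundary data in $\prod_v\L_v^{\omega_v}$, and the nondegeneracy of $\omega_v$ (giving $\dim\L_v^{\omega_v}=2d_v-\dim\L_v$) turns ``maximal isotropic'' into ``isotropic of dimension $d_v$''. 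With that paragraph supplied, your argument is a complete proof, matching the one the paper cites.
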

In what follows, the operator $\operatorname{H}$ is always supposed to be self-adjoint. Thus, by the third point of Proposition \ref{prpHSelfAdj}, its quadratic form $Q_{\H}$ can be expressed as
\begin{align}\label{eqQuadFormDef}
    Q_{\H}(\psi) & = \langle \operatorname{H} \psi, \psi \rangle_{L^2(\mathcal G)}  = \| \psi' \|_{L^2(\G)}^2 + \sum_{v \in \mathcal V} \langle \Lambda_v P_{R,v} \Psi(v), P_{R,v} \Psi(v) \rangle_{\mathbb C^{d_v}}.
\end{align}
It is defined on the domain
\begin{equation}
    \label{eqDomainQ}
    D(Q_{\H}) = \left\{ \psi \in H^1(\mathcal G): P_{D, v} \Psi(v) = 0 \text{ for all } v \in \mathcal V \right\}.
\end{equation}
Observe that the quadratic form comprises two parts: the $L^2(\mathcal G)$-norm of the derivative of the function and a quadratic part coming from the vertex conditions. The following lemma holds.
\begin{lem} \label{lemStrongConv}
    Let $(\psi_n)_{n \in \mathbb N} \subset D(Q_{\H})$ be a sequence bounded in $H^1 (\mathcal G)$. Then, there exists $\psi \in D(Q_{\H})$ such that $\psi_n \rightharpoonup \psi$ as $n \to \infty$ weakly in $H^1(\mathcal G)$ up to a subsequence. Furthermore, for $v \in \mathcal V$, we have
    \begin{equation} \label{eqStrConv}
         \sum_{v \in \mathcal V} \langle \Lambda_v P_{R,v} \Psi_n (v), P_{R,v} \Psi_n (v) \rangle_{\mathbb C^{d_v}} \to \sum_{v \in \mathcal V} \langle \Lambda_v P_{R,v} \Psi (v), P_{R,v} \Psi (v) \rangle_{\mathbb C^{d_v}} \text{ as $n \to \infty$}
    \end{equation}
    up to a subsequence.
\end{lem}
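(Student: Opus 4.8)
The plan is to split the argument into two independent parts: extracting a weak limit that lies in $D(Q_{\H})$, and establishing the convergence of the finite-dimensional vertex contributions along this limit. The whole lemma reduces to upgrading weak convergence in $H^1(\mathcal G)$ to strong convergence of the boundary vectors $\Psi_n(v)$, after which everything is elementary.

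First, since $H^1(\mathcal G)$ is a Hilbert space and $(\psi_n)$ is bounded, reflexivity (equivalently, Banach--Alaoglu) yields a subsequence, still denoted $(\psi_n)$, and a limit $\psi \in H^1(\mathcal G)$ with $\psi_n \rightharpoonup \psi$ weakly in $H^1(\mathcal G)$. To locate $\psi$ in $D(Q_{\H})$, I would use that for each vertex $v \in \mathcal V$ and each adjacent edge $e \in J_v$, the one-dimensional Sobolev embedding $H^1(I_e) \hookrightarrow C(\overline{I_e})$ makes the point evaluation $\phi \mapsto \phi_e(v)$ a bounded linear functional on $H^1(\mathcal G)$, for both finite and half-infinite edges. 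Consequently the trace map $\phi \mapsto \Phi(v)$ is a bounded linear operator from $H^1(\mathcal G)$ into the finite-dimensional space $\mathbb C^{d_v}$, so it sends weakly convergent sequences to weakly convergent ones: $\Psi_n(v) \rightharpoonup \Psi(v)$ in $\mathbb C^{d_v}$. Since $\mathbb C^{d_v}$ is finite-dimensional, weak and strong convergence coincide, whence $\Psi_n(v) \to \Psi(v)$. As $\psi_n \in D(Q_{\H})$ gives $P_{D,v}\Psi_n(v) = 0$ for every $n$, passing to the limit and using continuity of the projector $P_{D,v}$ yields $P_{D,v}\Psi(v) = 0$; since $v$ is arbitrary and $\mathcal V$ is finite, $\psi \in D(Q_{\H})$ by \eqref{eqDomainQ}.

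For the second assertion, I would argue directly from the strong convergence $\Psi_n(v) \to \Psi(v)$ obtained above. Applying the bounded projector $P_{R,v}$ gives $P_{R,v}\Psi_n(v) \to P_{R,v}\Psi(v)$ in $\mathbb C^{d_v}$, and since $\xi \mapsto \langle \Lambda_v \xi, \xi \rangle_{\mathbb C^{d_v}}$ is a continuous quadratic form (with $\Lambda_v$ a fixed symmetric matrix, available from the self-adjointness of $\operatorname{H}$ via Proposition \ref{prpHSelfAdj}), we obtain $\langle \Lambda_v P_{R,v}\Psi_n(v), P_{R,v}\Psi_n(v)\rangle_{\mathbb C^{d_v}} \to \langle \Lambda_v P_{R,v}\Psi(v), P_{R,v}\Psi(v)\rangle_{\mathbb C^{d_v}}$. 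Summing this convergence over the finite vertex set $\mathcal V$ gives exactly \eqref{eqStrConv} along the extracted subsequence.

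The only genuine point is the passage from weak convergence in the infinite-dimensional space $H^1(\mathcal G)$ to strong convergence of the vertex values. This is where an argument could conceivably fail in a non-compact setting, but here it is rescued entirely by the \emph{locality} and \emph{finite-dimensionality} of the vertex data: each $\Psi(v)$ lives in $\mathbb C^{d_v}$ and is produced by a bounded trace operator, so no mass can escape through the vertices. Once strong convergence of the finitely many boundary vectors is secured, both the membership $\psi \in D(Q_{\H})$ and the convergence of the vertex forms follow by elementary continuity, with no concentration-compactness considerations required.
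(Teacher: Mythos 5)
Your proof is correct, and it differs from the paper's in one genuine (if modest) respect: the mechanism used to upgrade weak $H^1(\mathcal G)$ convergence to strong convergence of the vertex vectors $\Psi_n(v)$. The paper restricts each $\psi_n$ to a small interval of length $L$ (as in \eqref{eqMinLenght}) adjacent to the vertex and invokes the \emph{compact} embedding of $H^1$ into $C$ on that interval, which entails extracting a further subsequence before passing to the vertex values. You instead observe that point evaluation at a vertex is a \emph{bounded linear functional} on $H^1(\mathcal G)$ (by the one-dimensional Sobolev embedding on each edge, finite or half-infinite), so that the trace map into $\mathbb C^{d_v}$ is weak-to-weak continuous and the convergence $\Psi_n(v) \to \Psi(v)$ follows from the definition of weak convergence, strong convergence in $\mathbb C^{d_v}$ being free by finite dimensionality. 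This is strictly more elementary---no compactness is needed, only boundedness of the trace---and it yields convergence of the vertex data along the whole weakly convergent subsequence, with no further extraction. The remaining steps (passing $P_{D,v}\Psi_n(v) = 0$ to the limit to conclude $\psi \in D(Q_{\H})$ via \eqref{eqDomainQ}, and continuity of the finite-dimensional quadratic forms $\xi \mapsto \langle \Lambda_v \xi, \xi \rangle_{\mathbb C^{d_v}}$ summed over the finitely many vertices to obtain \eqref{eqStrConv}) coincide with the paper's argument; your formulation of the last step via continuity of the vertex forms is in fact slightly cleaner than the paper's estimate-based one.
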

\begin{proof}
    Let $\psi \in H^1(\mathcal G)$ be such that $\psi_n \rightharpoonup \psi$ as $n \to \infty$ weakly in $H^1(\mathcal G)$ up to a subsequence. Let $v \in \mathcal V$ and $e \in J_v$. We identify $e$ with $[v, A]$ or $[v,\infty)$. The compact embedding $H^1(v, L) \subset C\left( [v, L] \right)$ holds, where $L$ has been defined in \eqref{eqMinLenght}. Therefore, we have $\psi_{n,e} (v) \to \psi_e (v)$ as $n \to \infty$ up to a subsequence, so that $\Psi_n(v) \to \Psi(v)$ as $n \to \infty$. In particular, $P_{D,v} \Psi(v) = 0$, so $\psi \in D(Q_{\H})$. As
    \[
       \left| \langle \Lambda_v P_{R,v} \Psi(v), P_{R,v} \Psi(v) \rangle_{\mathbb C^{d_v}} \right| \lesssim \sum_{v \in \mathcal{V}} \sum_{e, e' \in J_v} |\psi_e(v)| |\psi_{e'}(v)|,
    \]
    we obtain \eqref{eqStrConv}, and this concludes the proof.
\end{proof}
We also show that the essential spectrum of $\operatorname{H}$ is given by $[0,\infty)$. This result can be proven by the classical Weyl's theorem \cite[Theorem $6.19$]{Te14} and has been shown under several different settings, as for example for star-graphs in \cite[Proposition $3.2$]{GrIg19}. We present a slightly alternative proof that covers all the relevant settings in our work. The idea of the proof is to compute first the essential spectrum of the Hamiltonian operator with Dirichlet-vertex conditions and then to deduce the essential spectrum of $\operatorname{H}$ through the computation of a Weyl sequence.
\begin{prop} \label{prpEssSpec}
    The essential spectrum of $\operatorname{H}$ is given by $\sigma_{ess} ( \operatorname{H} ) = [0, \infty)$.
\end{prop}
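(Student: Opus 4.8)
The plan is to follow the route indicated above: first identify the essential spectrum of the fully decoupled Dirichlet operator, and then transfer it to $\H$ by constructing singular (Weyl) sequences. Write $\H_D$ for the operator acting as $-\psi_e''$ on each edge but subject to Dirichlet conditions $\Psi(v)=0$ at every vertex. Imposing Dirichlet data at all vertices severs the coupling, so $\H_D = \bigoplus_{e \in \mathcal E} (-\partial_{xx})^{D}_{I_e}$ splits as an orthogonal direct sum over the edges: each finite edge contributes a Dirichlet Laplacian on $[0,L_e]$, and each infinite edge a Dirichlet Laplacian on a half-line. The former has purely discrete spectrum $\{(n\pi/L_e)^2 : n \geq 1\}$, while the latter has $\sigma = \sigma_{ess} = [0,\infty)$. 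Since $|\mathcal E| < \infty$ and $\G$ carries at least one infinite edge, taking a finite direct sum yields $\sigma_{ess}(\H_D) = [0,\infty)$.

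For the inclusion $[0,\infty) \subseteq \sigma_{ess}(\H)$ I would build an explicit Weyl sequence on an infinite edge, identified with $[0,\infty)$. Fix $\lambda = k^2 \geq 0$ and set $\psi_n(x) = c_n\, \chi\!\big((x - x_n)/R_n\big)\, e^{\ii k x}$, where $\chi$ is a fixed smooth bump, $x_n \to \infty$, $R_n \to \infty$, and $c_n$ normalizes $\|\psi_n\|_{L^2}=1$. A direct computation gives $(\H - \lambda)\psi_n = -c_n\big(R_n^{-2}\chi'' + 2\ii k R_n^{-1}\chi'\big)\big((x-x_n)/R_n\big)\,e^{\ii k x}$, whose $L^2$-norm is $O(R_n^{-1}) \to 0$, while $\psi_n \rightharpoonup 0$ because the supports escape to infinity. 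The decisive point is that, for $n$ large, $\supp \psi_n$ lies in the interior of the edge, so $\Psi_n(v) = \Psi_n'(v) = 0$ at every vertex and hence $\psi_n \in D(\H)$ \emph{irrespective of the vertex conditions}. Thus $(\psi_n)$ is a singular sequence for $\H$ at $\lambda$, giving $\lambda \in \sigma_{ess}(\H)$.

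The reverse inclusion $\sigma_{ess}(\H) \subseteq [0,\infty)$ is the crux, since the vertex conditions are precisely what can push the bottom of the spectrum below $0$; the task is to show that this effect produces only discrete spectrum. Given a singular sequence $(\psi_n)$ at some $\lambda$, so $\|\psi_n\|=1$, $\psi_n \rightharpoonup 0$ and $(\H-\lambda)\psi_n \to 0$, note that $\langle (\H-\lambda)\psi_n,\psi_n\rangle \to 0$. I localize with a smooth IMS partition $\chi^2 + \theta^2 = 1$, where $\chi = 1$ on a bounded neighborhood $K$ of the compact core (all vertices, all finite edges, and a unit segment of each infinite edge) and $\theta$ is supported on the infinite edges away from the vertices, and use the identity
\[
\langle (\H-\lambda)\psi_n,\psi_n\rangle = \langle (\H-\lambda)\chi\psi_n,\chi\psi_n\rangle + \langle (\H-\lambda)\theta\psi_n,\theta\psi_n\rangle - \langle (|\chi'|^2+|\theta'|^2)\psi_n,\psi_n\rangle.
\]
Because $K$ is bounded, the embedding $H^1(K)\hookrightarrow L^2(K)$ is compact, so weak nullity forces $\chi\psi_n \to 0$ strongly; this kills the first and third terms and also gives $\|\theta\psi_n\|^2 = 1-\|\chi\psi_n\|^2 \to 1$. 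On the support of $\theta$ the quadratic form $Q_{\H}$ carries no vertex contribution, whence $\langle (\H-\lambda)\theta\psi_n,\theta\psi_n\rangle = \|(\theta\psi_n)'\|_{L^2}^2 - \lambda\|\theta\psi_n\|^2 \geq -\lambda\|\theta\psi_n\|^2$. If $\lambda<0$ this is bounded below by a fixed positive constant, while the left-hand side tends to $0$, a contradiction; hence $\lambda \geq 0$. Combining the two inclusions yields $\sigma_{ess}(\H) = [0,\infty)$.

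The main obstacle is this last step: one must guarantee that the genuinely spectrum-altering vertex couplings contribute nothing to the \emph{essential} spectrum, even though they are responsible for the possibility $l_{\H}<0$. The localization argument above resolves this by confining their influence to the relatively compact core—where it can only generate discrete spectrum—while the tails see a nonnegative free operator. As an alternative one may observe that $\H$ and $\H_D$ are self-adjoint extensions of a common symmetric operator with finite deficiency indices, so their resolvents differ by a finite-rank operator and the classical Weyl theorem gives $\sigma_{ess}(\H)=\sigma_{ess}(\H_D)$ at once; I would nonetheless favor the hands-on Weyl-sequence version to keep the argument self-contained.
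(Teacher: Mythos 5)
Your proposal is correct in substance but takes a genuinely different route from the paper's. The paper also introduces the decoupled Dirichlet operator $\H_D$, but it computes $\sigma_{ess}(\H_D)=[0,\infty)$ via a resolvent argument (extending functions on an infinite edge symmetrically to the whole line), and it proves \emph{both} inclusions between $\sigma_{ess}(\H)$ and $\sigma_{ess}(\H_D)$ by a single mechanism: given a Weyl sequence for either operator, multiply it by a cutoff $1-\eta$ vanishing on a neighborhood of the compact core, renormalize, and observe that the truncated sequence is simultaneously a Weyl sequence for $\H$ and for $\H_D$, since it vanishes near every vertex and hence satisfies any vertex conditions; ``switching the roles of $\H$ and $\H_D$'' then gives equality. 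You instead treat the two inclusions by different tools: $[0,\infty)\subseteq\sigma_{ess}(\H)$ by explicit modulated bumps escaping along an infinite edge (never passing through $\H_D$ at all), and $\sigma_{ess}(\H)\subseteq[0,\infty)$ by an IMS partition together with compactness of the embedding $H^1\hookrightarrow L^2$ on the core and positivity of the free form on the tails. Your direct-sum computation $\H_D=\bigoplus_{e}(-\partial_{xx})^{D}_{I_e}$ is cleaner and more standard than the paper's Step 1, and your closing remark (common symmetric restriction with finite deficiency indices, finite-rank resolvent difference, classical Weyl theorem) is precisely the shortcut the paper acknowledges and declines to take. What the paper's route buys is uniformity—one truncation argument serves both directions; what yours buys is self-containedness and a transparent identification of where positivity enters.

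Two steps need tightening. First, your compactness argument silently assumes that the singular sequence $(\psi_n)$ is bounded in $H^1(\G)$; this is true but requires the form bound on the vertex terms used in the paper (see the proof of Lemma~\ref{lem2}): since $\lvert\sum_{v}\langle\Lambda_v P_{R,v}\Psi_n(v),P_{R,v}\Psi_n(v)\rangle\rvert\leq\eps\|\psi_n'\|_{L^2(\G)}^2+C_\eps\|\psi_n\|_{L^2(\G)}^2$, the convergence $\langle(\H-\lambda)\psi_n,\psi_n\rangle_{L^2(\G)}\to0$ forces $\sup_n\|\psi_n'\|_{L^2(\G)}<\infty$. Second, the strong convergence $\chi\psi_n\to0$ in $L^2$ does \emph{not} ``kill'' the first IMS term: $\langle(\H-\lambda)\chi\psi_n,\chi\psi_n\rangle$ contains the kinetic contribution $\|(\chi\psi_n)'\|_{L^2}^2$, which has no reason to vanish. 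What is true—and is all your contradiction needs—is that this term is bounded \emph{below} by $o(1)$: the vertex contributions tend to zero by the compact embedding of $H^1$ into continuous functions on the bounded core, the term $-\lambda\|\chi\psi_n\|_{L^2}^2$ is nonnegative when $\lambda<0$, and the kinetic part is nonnegative. With the inequality stated this way, the conclusion $0\geq-\lambda>0$ and hence $\lambda\geq0$ goes through unchanged; as written, however, the claimed convergence to zero of the first term is false in general.
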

\begin{proof}
    \textit{Step 1}: We introduce the Hamiltonian operator $\operatorname{H}_D$ with \emph{Dirichlet-vertex conditions}, defined on the domain
    \[
        D( \operatorname{H}_D ) = \left\{ \psi \in H^2( \mathcal G ) : \psi_e (v) = 0 \text{ for all } v \in \mathcal V, e \in J_v \right\}.
    \]
    This operator is self-adjoint (see \cite[Chapter $1$]{BeKu13}) and, for $\psi \in D( \operatorname{H}_D )$, we have
    \[
        \langle \operatorname{H}_D \psi, \psi \rangle_{L^2 ( \mathcal G )} = \operatorname{Re} \left( \int_{\mathcal G} | \psi' (x) |^2 dx \right) \geq 0.
    \]
    Thus, $\sigma( \operatorname{H}_D ) \subset [0, \infty).$ To show the reverse inclusion, we consider $\lambda \in \rho( \operatorname{H}_D )$, where $\rho( \operatorname{H}_D )$ denotes the resolvent set of $\operatorname{H}_D$. For all $\psi \in L^2 ( \mathcal G )$, there exists a unique $\phi \in D( \operatorname{H}_D )$ such that, for all $e \in \mathcal E$,
    \[
        ( - \partial_{xx} - \lambda )^{-1} \psi_e (x) = \phi_e (x), \quad x \in I_e.
    \]
    Let $e \in \mathcal E$ such that $L_e = \infty$. We extend $\psi_e$ (respectively $\phi_e$) by central symmetry on $\mathbb R$, and we denote its extension by $\tilde \psi_e$ (respectively $\tilde \phi_e$), that is 
    \begin{equation}
        \tilde \psi_e(x) = \begin{cases}
            \psi_e(x) \quad \ \ \, x \geq 0, \\
            \psi_e(-x) \quad x < 0,
        \end{cases}
    \end{equation}
    and the same for $\tilde \phi_e$ so that $\tilde \psi_e \in L^2(\mathbb R)$ and $\tilde \phi_e \in H^2( \mathbb R )$. Then, $\tilde \phi_e$ is the only function in $H^2( \mathbb R )$ (up by multiplication by a constant) which satisfies
    \[
        (-\partial_{xx} - \lambda)^{-1} \tilde \psi_e (x) = \tilde \phi_e (x), \quad x \in \mathbb R,
    \]
    otherwise this would contradict the fact that $\phi$ is the unique preimage of $\psi$ under the operator $(\operatorname{H}_D - \lambda)^{-1}$. In particular, the unbounded operator $(-\partial_{xx} - \lambda)$ defined on the domain $\left\{ \psi \in H^2( \mathbb R ) : f(0) = 0 \right\} \subset L^2 ( \mathbb R )$ is invertible. This operator has no discrete spectrum and its essential spectrum is given by $[0, \infty)$. % (see e.g. \cite[Section ?]{AlGeHoHo88}).
    Therefore, $\lambda \in \mathbb C \setminus [0, \infty)$ and finally
    \begin{equation} \label{eqEssSpecDir}
        \sigma_{ess} ( \operatorname{H}_D ) = [0, \infty).
    \end{equation}
    \textit{Step 2}: Let $\lambda \in \sigma_{ess} ( \operatorname{H} )$. From \cite[Chapter $9$]{EdEv18}, there exists a Weyl's sequence $(\phi_n)_{n \in \mathbb N} \subset D( \operatorname{H} )$, i.e. the sequence satisfies the following conditions:
    \begin{align}
        & \| \phi_n \|_{L^2 ( \mathcal G )} = 1 \text{ for all } n \in \mathbb N, \label{eqWeyl1} \\
        & \phi_n \rightharpoonup 0 \text{ weakly in } L^2( \mathcal G) \text{ as } n \to \infty, \label{eqWeyl2} \\
        & ( \operatorname{H} - \lambda ) \phi_n \to 0 \text{ in } L^2( \mathcal G) \text{ as } n \to \infty. \label{eqWeyl3}
    \end{align}
    As \eqref{eqWeyl1} holds, observe that
    \[
        \| \operatorname{H} \phi_n \|_{L^2 ( \mathcal G )}^2 = \| \lambda \phi_n \|_{L^2 ( \mathcal G )}^2 + o(1) = |\lambda|^2 + o(1),
    \]
    so $(\phi_n)_{n \in \mathbb N}$ is bounded in $H^2 ( \mathcal G )$. Consider a function $\eta \in C^\infty ( \mathcal G )$ such that, for $e \in \mathcal E$,
    \begin{align*}
        \eta_e (x) & = 1 \text{ for } x \in (0, L_e) & \text{ if } L_e < \infty, \\
        \eta_e (x) & =
        \begin{cases}
            1 \text{ for } x \in (0, L) \\
            0 \text{ for } x \in (2L, \infty)
        \end{cases}
        & \text{ if } L_e = \infty,
    \end{align*}
    where $L$ is defined in \eqref{eqMinLenght}.
    Observe that $\eta$ is compactly supported in $\mathcal G$. The sequence $(\eta \phi_n)_{n \in \mathbb N}$ is bounded in $H^2( \mathcal G )$, thus is bounded in $L^\infty ( \mathcal G )$ by Sobolev embeddings. We have
    \[
        \| \eta \phi_n \|_{L^2 ( \mathcal G )}^2 \leq \| \eta \phi_n \|_{L^\infty (\mathcal G)} \langle \eta, \phi_n \rangle_{L^2 ( \mathcal G )} \to 0 \text{ as } n \to \infty.
    \]
    by \eqref{eqWeyl2}, which implies that
    \begin{equation} \label{eqWeyl21}
        \| (1 - \eta) \phi_n \|_{L^2 ( \mathcal G )}^2 \to 1 \text{ as } n \to \infty.
    \end{equation}
    Furthermore, for any $\psi \in L^2( \mathcal G )$, we have
    \[
        | \langle (1 - \eta) \phi_n, \psi \rangle_{L^2 ( \mathcal G )} | \leq \| \psi \|_{L^2 ( \mathcal G )}^2 \| \eta \phi_n \|_{L^2 ( \mathcal G )}^2 + | \langle \phi_n, \psi \rangle_{L^2 ( \mathcal G )} | \to 0 \text{ as } n \to \infty,
    \]
    so
    \begin{equation} \label{eqWeyl22}
        (1 - \eta) \phi_n \rightharpoonup 0 \text{ weakly in } L^2 ( \mathcal G ) \text{ as } n \to \infty.
    \end{equation}
    On each edge $e \in \mathcal E$, we have
    \begin{equation}\label{eqCommut}
    \begin{aligned}
         ( \partial_{xx} + \lambda ) ( \eta_e \phi_{n,e} ) & = \partial_{xx} \eta_e \phi_{n,e} + 2 \partial_x \eta_e \partial_x \phi_{n,e} + \eta_e \partial_{xx} \phi_{n,e} + \lambda \eta_e \phi_{n,e} \\
         &= \eta_e (\partial_{xx} + \lambda) \phi_{n,e} + [\partial_{xx} + \lambda, \eta_e]  \phi_{n,e}.
    \end{aligned}
    \end{equation}
    Since $\phi_{n,e}$ is uniformly bounded in $H^2(I_e)$ and $\phi_{n,e} \rightharpoonup 0$ as $n \to \infty$ in $L^2(I_e)$, then, up to extracting a subsequence, it follows that $\phi_{n,e} \rightharpoonup 0$ in $H^2(I_e)$. Since $ \partial_{xx} \eta_e, \partial_{x} \eta_e $ are smooth and supported in bounded domains, from \eqref{eqWeyl2}, \eqref{eqWeyl3} and \eqref{eqCommut}, we obtain that
    \begin{equation*}
        \begin{aligned}
             \| ( \partial_{xx} + \lambda )  ( \eta_e \phi_{n,e} ) \|_{L^2 ( \mathcal G )} \leq  \| \eta_e ( \partial_{xx} + \lambda ) \phi_{n,e} ) \|_{L^2 ( \mathcal G )} +  \|  [\partial_{xx} + \lambda, \eta_e]  \phi_{n,e} \|_{L^2 ( \mathcal G )}  \to 0 
        \end{aligned}
    \end{equation*}
   as $n \to \infty$. This implies that 
   \begin{equation*}
       \| (\H - \lambda)(\eta \phi_n) \|_{L^2(\G)}^2 \to 0 \ \mbox{ as } \ n\to \infty.
   \end{equation*}
   Using \eqref{eqWeyl3}, we have
    \begin{equation} \label{eqWeyl23}
        \begin{aligned}
            \| ( \operatorname{H} - \lambda ) ( 1 - \eta ) \phi_n \|_{L^2 ( \mathcal G )}^2
            %& = \| ( \operatorname{H} - \lambda ) \phi_n \|_{L^2 ( \mathcal G )}^2 - 2 \langle ( \operatorname{H} - \lambda ) \phi_n , ( \operatorname{H} - \lambda ) ( \eta \phi_n ) \rangle_{L^2 ( \mathcal G )} + \| ( \operatorname{H} - \lambda ) ( \eta \phi_n ) \|_{L^2 ( \mathcal G )}^2 \\
            & \lesssim \| ( \operatorname{H} - \lambda ) \phi_n \|_{L^2 ( \mathcal G )}^2 + \| ( \operatorname{H} - \lambda ) ( \eta \phi_n ) \|_{L^2 ( \mathcal G )}^2 \to 0 \text{ as } n \to \infty.
        \end{aligned}
    \end{equation}
    Let $( \varphi_n )_{n \in \mathbb N} \subset D( \operatorname{H} )$ be given by
    \[
        \varphi_n \coloneqq \frac{(1 - \eta) \varphi_n}{\| (1 - \eta)\varphi_n \|_{L^2 ( \mathcal G )}}, \quad n \in \mathbb N.
    \]
    By \eqref{eqWeyl21}, \eqref{eqWeyl22} and \eqref{eqWeyl23}, $( \varphi_n )_{n \in \mathbb N}$ is a Weyl's sequence for $\operatorname{H}$. For $v \in \mathcal V$, we have $\varphi_{n,e} = 0$ on the neighborhood of $v$ for each $e \in J_v$. Thus $(\phi_n) \subset D(\H_D)$ and
    \begin{align*}
        ( \operatorname{H}_D - \lambda ) \varphi_n & = ( \operatorname{H} - \lambda ) \varphi_n  \to 0 \text{ in } L^2 ( \mathcal G ) \text{ as } n \to \infty.
    \end{align*}
    Hence, $( \varphi_n )_{n \in \mathbb N}$ is also a Weyl sequence for $\operatorname{H}_D$. In particular, $\lambda \in \sigma_{ess} ( \operatorname{H}_D ) \subset [0,\infty)$ by \eqref{eqEssSpecDir}.
    We may show the reverse inclusion by switching the role $\operatorname{H}$ and $\operatorname{H}_D$. Therefore
    \[
        \sigma_{ess} ( \operatorname{H} ) = \sigma_{ess} ( \operatorname{H}_D ) = [0, \infty)
    \]
    and this concludes the proof.
\end{proof}
We also remark that the operator \( \H \) can have at most a finite number of negative eigenvalues, as established in \cite[Theorem 3.7]{KoSc06}. Each of these eigenvalues belongs to the discrete spectrum \( \sigma_{\text{dis}}(\H) \), since \( \H \) can be viewed as the Laplacian perturbed by a finite-rank operator determined by the vertex conditions.

\subsection{Well-posedness}

The Cauchy problem \eqref{eqTimeDepNLS} is globally well-posed on the energy space, and conserves mass and energy, as stated in the following proposition.
\begin{proposition} \label{propLocalWP}
    For $u_0 \in D(Q_{\H})$, there exists $T \in (0, \infty]$ such that equation \eqref{eqTimeDepNLS} has an unique global solution $u \in C( [0, \infty), D(Q_{\H}) ) \cap C^1( [0, T), D(Q_{\H})^\star )$. 
    Furthermore, the solution $u$ satisfies the conservation of mass and energy, i.e.
    \[
        M (u(t, \cdot)) = M (u_0), \quad E (u(t, \cdot)) = E (u_0)
    \]
    for all $t \in [0, \infty)$.
\end{proposition}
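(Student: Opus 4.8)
The plan is to treat \eqref{eqTimeDepNLS} as an abstract semilinear Schrödinger equation driven by the self-adjoint generator $\H$, obtain a local solution by a contraction argument, and then globalize using the two conservation laws, where the defocusing sign of the nonlinearity is decisive. First I would record the functional setup. By Proposition~\ref{prpEssSpec} together with the remark that $\H$ carries only finitely many negative eigenvalues, $\H$ is self-adjoint and bounded below, say $\H \geq l_{\H}$. Fixing $\beta > -l_{\H}$ and setting $\tilde\H = \H + \beta \geq 1$, the energy space coincides with $X := D(Q_{\H}) = D(\tilde\H^{1/2})$, a Hilbert space for the equivalent norm $\norm{\psi}_X^2 = \norm{\tilde\H^{1/2}\psi}_{L^2(\G)}^2$ that is continuously embedded in $H^1(\G)$. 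By Stone's theorem $-\ii\H$ generates a unitary $C_0$-group $(e^{-\ii t\H})_{t\in\R}$ on $L^2(\G)$, which restricts to an isometric group on $X$ and extends to one on $X^\star$, so \eqref{eqTimeDepNLS} is equivalent to the Duhamel equation
\begin{equation*}
    u(t) = e^{-\ii t\H}u_0 - \ii\int_0^t e^{-\ii (t-s)\H}\, g(u(s))\,ds, \qquad g(\psi) = \abs{\psi}^{p-1}\psi,
\end{equation*}
and the nonlinearity is variational, $g = G'$ with $G(\psi) = \tfrac{1}{p+1}\norm{\psi}_{L^{p+1}(\G)}^{p+1} \in C^1(X,\R)$.

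For the local theory I would verify the hypotheses of the standard contraction argument for such equations (as in \cite[Chapter~3]{Ca03}): that $G \in C^1(X,\R)$ and that $g$ is Lipschitz continuous on bounded subsets of $X$, with values in $L^2(\G)$ and in $X^\star$. Since each edge is an interval, the one-dimensional embedding $H^1(\G) \hookrightarrow L^\infty(\G)$ holds (Gagliardo--Nirenberg \eqref{eqGN} with $q=\infty$), so for $u,v$ ranging over a bounded set of $X$,
\begin{equation*}
    \norm{g(u)-g(v)}_{L^2(\G)} \lesssim \bigl(\norm{u}_{L^\infty(\G)}^{p-1}+\norm{v}_{L^\infty(\G)}^{p-1}\bigr)\norm{u-v}_{L^2(\G)} \lesssim \norm{u-v}_{X},
\end{equation*}
and the corresponding bound in $X^\star$ follows from the same pointwise inequality by H\"older's inequality and duality against $L^{p+1}(\G)$. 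This produces a unique maximal solution $u \in C([0,T_{\max}),X)\cap C^1([0,T_{\max}),X^\star)$ together with the blow-up alternative: either $T_{\max}=\infty$, or $\norm{u(t)}_X \to \infty$ as $t\to T_{\max}$. The conservation of $M$ and $E$ I would obtain by pairing the equation with $u$ and with $\d_t u$ and taking imaginary and real parts respectively; this is rigorous for data in $D(\H)$ and extends to all $u_0 \in X$ by density of $D(\H)$ in $X$ and continuous dependence on the datum.

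The main obstacle is the globalization, i.e. producing a uniform-in-time $H^1$ bound that rules out the blow-up alternative, and this is exactly where the defocusing sign enters. Because the $L^{p+1}$-term in $E$ is nonnegative, conservation of energy gives $\tfrac12 Q_{\H}(u(t)) \leq E(u(t)) = E(u_0)$. It then remains to dominate $\norm{u'}_{L^2(\G)}^2$ by $Q_{\H}(u)$ and $M(u)$, which is delicate since both $Q_{\H}$ and its vertex term in \eqref{eqQuadFormDef} may be negative. I would control the vertex contribution through the boundary values using the trace estimate $\abs{\Psi(v)}^2 \lesssim \norm{u}_{L^\infty(\G)}^2 \lesssim \eps\norm{u'}_{L^2(\G)}^2 + C_\eps\norm{u}_{L^2(\G)}^2$ (again \eqref{eqGN} with $q=\infty$ followed by Young's inequality), and absorb it:
\begin{equation*}
    \norm{u'}_{L^2(\G)}^2 \leq \tfrac{1}{1-\eps}\bigl(Q_{\H}(u) + C_\eps\norm{u}_{L^2(\G)}^2\bigr) \leq \tfrac{1}{1-\eps}\bigl(2E(u_0) + 2C_\eps\,\norm{u_0}_{L^2(\G)}^2\bigr).
\end{equation*}
Combined with conservation of mass, $\norm{u(t)}_{L^2(\G)} = \norm{u_0}_{L^2(\G)}$, this yields $\sup_{t}\norm{u(t)}_{H^1(\G)} < \infty$, hence $\sup_t \norm{u(t)}_X < \infty$, so by the blow-up alternative $T_{\max}=\infty$ and the solution is global. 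I expect the absorption step to be the crux: in the focusing problem the nonlinear term would carry the opposite sign and could not be discarded, so it is precisely the defocusing structure that supplies the a priori bound.
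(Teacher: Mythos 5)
Your proposal is correct and takes essentially the same route as the paper, whose proof consists precisely of invoking the standard contraction argument of \cite[Chapter~4]{Ca03} for local well-posedness and then globalizing via conservation of mass and energy together with the defocusing sign of the nonlinearity. Your explicit absorption of the vertex contribution to $Q_{\H}$ by the Gagliardo--Nirenberg/Young trace estimate, yielding the uniform $H^1(\G)$ bound, is exactly the (unstated) content behind the paper's appeal to the defocusing structure, so nothing essential is missing or different.
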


The proof of this result follows standard arguments; see \cite[Chapter 4]{Ca03} for reference. Using the conservation of mass and energy, along with the defocusing nature of the nonlinearity, we deduce that the unique solution to the Cauchy problem exists globally.

\section{Existence of ground states}\label{secCriterion}

\subsection{Action ground states}

To enhance clarity, we structure the proof of Theorem \ref{thmCriterion} and Theorem \ref{thmCritCase} through a series of intermediate lemmas. We begin by proving that if  $\omega \geq 0$, then  $s_\omega > -\infty$. Moreover, on non-compact graphs, this condition is in fact equivalent:  $s_\omega > -\infty$  necessarily implies  $\omega \geq 0$. Next, we establish that $\omega < -l_{\operatorname{H}}$ if and only if  $s_\omega < 0$, leading to the conclusion that on non-compact graphs, the condition  $s_\omega \in (-\infty, 0)$  holds if and only if  $l_{\operatorname{H}} < 0$  and  $\omega \in [0, -l_{\operatorname{H}})$. We then demonstrate that, for $\omega \neq 0$ and for $\omega = 0, 1< p < 5$, an action ground state exists if and only if $s_\omega \in (-\infty, 0)$, which proves Theorem \ref{thmCriterion}. For $p \geq 5$, we show that if a solution exists, it has to vanish on the infinite edges, which proves Theorem \ref{thmCritCase}.

Finally, applying the classical Cazenave-Lions argument \cite{CaLi82}, we establish the orbital stability of the set of ground states.  We will also show some properties of $s_\omega$. 

We start with the lower bound on $s_\omega$.
\begin{lem}\label{lem2}
    If $\omega \geq 0$, then $s_\omega > -\infty$.
\end{lem}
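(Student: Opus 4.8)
The plan is to show that $S_\omega$ is bounded below on all of $D(Q_{\H})$ (not merely on a constrained manifold) by exploiting the defocusing sign of the nonlinearity. Writing out the functional with the help of \eqref{eqQuadFormDef},
\[
S_\omega(\psi) = \frac12\|\psi'\|_{L^2(\G)}^2 + \frac12\sum_{v\in\mathcal V}\langle \Lambda_v P_{R,v}\Psi(v), P_{R,v}\Psi(v)\rangle_{\mathbb C^{d_v}} + \frac1{p+1}\|\psi\|_{L^{p+1}(\G)}^{p+1} + \frac\omega2\|\psi\|_{L^2(\G)}^2,
\]
I observe that for $\omega \geq 0$ every term is nonnegative except the vertex sum, which is the only one that can drive $S_\omega$ toward $-\infty$. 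Since each $\Lambda_v$ is bounded and $|P_{R,v}\Psi(v)| \leq |\Psi(v)|$, this term is bounded below by $-C\sum_{v}\sum_{e\in J_v}|\psi_e(v)|^2$, with $C$ depending only on the finitely many operator norms $\|\Lambda_v\|$. Thus the whole problem reduces to controlling the pointwise values $|\psi_e(v)|^2$ at the vertices, which is the main obstacle.

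To control them I would use a trace inequality localized to a short interval near each vertex. For an incidence $(v,e)$, identify $e$ so that $v$ sits at the origin and consider the subinterval of length $L$ given in \eqref{eqMinLenght}, which is contained in $I_e$ because $L \leq L_e$. The fundamental theorem of calculus followed by averaging the base point over $(0,L)$ yields
\[
|\psi_e(v)|^2 \leq \frac1L\|\psi_e\|_{L^2(0,L)}^2 + 2\|\psi_e\|_{L^2(0,L)}\|\psi_e'\|_{L^2(0,L)},
\]
and Young's inequality turns this into $|\psi_e(v)|^2 \leq \varepsilon\|\psi_e'\|_{L^2(I_e)}^2 + C_\varepsilon\|\psi_e\|_{L^2(0,L)}^2$ for any $\varepsilon>0$.

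The remaining local mass $\|\psi_e\|_{L^2(0,L)}^2$ must then be absorbed by the positive $L^{p+1}$ term. Since the interval is bounded, Hölder gives $\|\psi_e\|_{L^2(0,L)}^2 \lesssim \|\psi_e\|_{L^{p+1}(0,L)}^2$, and because $p>1$ forces $p+1>2$, a second application of Young's inequality yields $\|\psi_e\|_{L^{p+1}(0,L)}^2 \leq \delta\|\psi_e\|_{L^{p+1}(I_e)}^{p+1} + C_\delta$ for any $\delta>0$, with $C_\delta$ independent of $\psi$. Summing the resulting bounds over the finitely many incidences $(v,e)$, the vertex term is estimated by $C\bigl(\varepsilon\|\psi'\|_{L^2(\G)}^2 + \delta\|\psi\|_{L^{p+1}(\G)}^{p+1} + C_{\varepsilon,\delta}\bigr)$ for a $\psi$-independent constant $C_{\varepsilon,\delta}$.

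Finally, I would first fix $\varepsilon$ small enough that the coefficient of $\|\psi'\|_{L^2(\G)}^2$ in $S_\omega$ stays positive, then fix $\delta$ small enough that the coefficient of $\|\psi\|_{L^{p+1}(\G)}^{p+1}$ stays positive; discarding the nonnegative mass term (the only place where $\omega\geq 0$ is actually used), I am left with $S_\omega(\psi) \geq -C_{\varepsilon,\delta}$ uniformly in $\psi$, whence $s_\omega \geq -C_{\varepsilon,\delta} > -\infty$. I emphasize that at the endpoint $\omega = 0$ there is no mass term to help, so the defocusing $L^{p+1}$ contribution is indispensable; this is precisely what distinguishes the present lower bound from the focusing setting.
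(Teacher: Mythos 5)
Your proof is correct and follows essentially the same strategy as the paper's: a trace estimate near each vertex on an interval of length $L$ (fundamental theorem of calculus plus averaging and Young's inequality), followed by Hölder and Young to absorb the local $L^2$ mass into the defocusing $L^{p+1}$ term, and summation over the finitely many vertex--edge incidences. The only cosmetic difference is your use of two separate small parameters $\varepsilon$ and $\delta$ where the paper fixes the $L^{p+1}$ coefficient at $\tfrac{1}{2(p+1)}$ directly.
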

\begin{proof}
    Let $v \in \mathcal{V}$ and $e \in J_v$. We identify it with $e \subseteq [v, w_e)$  with $v \in \R$ and $w_e \in (v, \infty]$.  By the definition of  $L$  in \eqref{eqMinLenght}, it follows that  $[v, v+L] \subset e$.
    Then for any $\psi \in D(Q_{\operatorname{H}})$ and any $x \in [v, v+ L]$, we have
    \begin{equation*}
        | \psi_e (v) |^2 = |\psi_e(x)|^2 + \int_v^{x} \partial_y |\psi_e(y)|^2dy \leq |\psi_e(x)|^2 + 2 \int_v^{x} |\psi_e'(y) \psi_e(y)|dy.
    \end{equation*}
    It follows from Young's inequality that for any $\eps >0$ we get
    \begin{equation*}
        | \psi_e (v) |^2 = \frac{1}{L} \int_v^{v+L} | \psi_e (v) |^2 dx \leq \frac{C}{\eps} \| \psi_e \|_{L^2(v,v+L)}^2 + \eps \| \psi_e' \|_{L^2(v,v+L)}^2.
    \end{equation*}
    where $C>0$ depends only on $v$ and $L$. Then the triangular inequality implies that 
    \begin{align*}
        \langle \Lambda_v P_{R,v} \Psi(v), P_{R,v} \Psi(v) \rangle_{\mathbb C^{d_v}} & \lesssim \sum_{v\in\mathcal{V}} \sum_{e,e' \in J_v}|\psi_e(v)||\psi_{e'}(v)| \\
        &\lesssim \sum_{v \in \mathcal{V}} \sum_{e \in J_v}| \psi_e(v) |^2 \leq \sum_{v\in \mathcal{V}} \sum_{e \in J_v} \frac{C}{\eps} \| \psi_e \|_{L^2(v,v+L)}^2 + \eps \| \psi_e' \|_{L^2(v,v+L)}^2.
    \end{align*}
    Now we use Hölder's and Young's inequalities to obtain that there exists $R = R(L) > 0$ such that
    \[
        \frac{C}{\eps} \| \psi_e \|_{L^2(v,v+L)}^2 \leq \frac{1}{2(p+1)} \| \psi_e \|_{L^{p+1}(v,v+L)}^{p+1} + \frac{R}{\eps}.
    \]
    Since $|\mathcal{V}| < \infty$ and $|\mathcal{E}| <\infty$, there exists $K = K(|\mathcal{V}|, |\mathcal{E}|) > 0$ such that we obtain the lower bound on the action 
    \begin{equation}\label{eqBound2}
        S_\omega(\psi) \geq \left( \frac{1}{2} - \eps \right) \| \psi' \|_{L^2(\G)}^2 + \frac{\omega}{2} \| \psi \|_{L^2(\G)}^2 +  \frac{1}{2(p+1)} \| \psi \|_{L^{p+1}(\G)}^{p+1}  - \frac{KR}{\eps}
    \end{equation}
    which yields the result for $\eps \in (0, 1/2).$ 
\end{proof}

We show the opposite implication for non-compact graphs.
\begin{lem}\label{lemOmNeg}
    Suppose that $\G$ is non-compact. If $\omega < 0$, then $s_{\omega} = -\infty$.
\end{lem}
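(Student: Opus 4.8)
The plan is to exploit the non-compactness of $\G$ directly: since at least one edge has infinite length, we can place a test function deep inside that edge, away from every vertex, so that its contribution to the quadratic form reduces to the pure Dirichlet energy $\|\psi'\|_{L^2(\G)}^2$ with no vertex terms. We then drive $S_\omega$ to $-\infty$ along a two-parameter family, first suppressing the defocusing $L^{p+1}$ term with a small amplitude and then stretching the support so that the negative mass term $\frac{\omega}{2}\|\psi\|_{L^2(\G)}^2$ dominates.

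Concretely, I would fix an infinite edge, identify it with $[0,\infty)$, and choose a nonzero $\phi \in C_c^\infty((1,2))$, extended by zero to the rest of $\G$. Since $\phi$ vanishes together with its derivative near every vertex, it lies in $D(Q_{\H})$ and satisfies $Q_{\H}(\phi) = \|\phi'\|_{L^2(\G)}^2$ by \eqref{eqQuadFormDef}. For $\mu > 0$ and $\lambda > 1$, set $\psi_{\mu,\lambda}(x) := \mu\,\phi(x/\lambda)$; its support lies in $(\lambda, 2\lambda)$, hence stays inside the chosen infinite edge and away from all vertices, so that $\psi_{\mu,\lambda} \in D(Q_{\H})$ and again $Q_{\H}(\psi_{\mu,\lambda}) = \|\psi_{\mu,\lambda}'\|_{L^2(\G)}^2$.

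A change of variables gives the scalings
\begin{equation*}
    \|\psi_{\mu,\lambda}'\|_{L^2(\G)}^2 = \mu^2 \lambda^{-1} \|\phi'\|_{L^2(\G)}^2, \quad \|\psi_{\mu,\lambda}\|_{L^2(\G)}^2 = \mu^2\lambda\|\phi\|_{L^2(\G)}^2, \quad \|\psi_{\mu,\lambda}\|_{L^{p+1}(\G)}^{p+1} = \mu^{p+1}\lambda\|\phi\|_{L^{p+1}(\G)}^{p+1},
\end{equation*}
so that
\begin{equation*}
    S_\omega(\psi_{\mu,\lambda}) = \frac{\mu^2}{2\lambda}\|\phi'\|_{L^2(\G)}^2 + \mu^2\lambda\left(\frac{\mu^{p-1}}{p+1}\|\phi\|_{L^{p+1}(\G)}^{p+1} + \frac{\omega}{2}\|\phi\|_{L^2(\G)}^2\right).
\end{equation*}
Since $p > 1$ and $\omega < 0$, the bracket tends to $\frac{\omega}{2}\|\phi\|_{L^2(\G)}^2 < 0$ as $\mu \to 0$; hence I may freeze $\mu$ so small that the bracket equals a constant $-c < 0$. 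With this $\mu$ fixed, sending $\lambda \to \infty$ makes the first term vanish while $-c\,\mu^2\lambda \to -\infty$, whence $S_\omega(\psi_{\mu,\lambda}) \to -\infty$ and therefore $s_\omega = -\infty$.

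The main obstacle, and the reason a naive dilation does not suffice, is that under pure stretching (with fixed amplitude) the defocusing term $\|\psi\|_{L^{p+1}(\G)}^{p+1}$ scales exactly like the mass term $\|\psi\|_{L^2(\G)}^2$, both linearly in $\lambda$, so their combined coefficient need not be negative. The remedy is the extra amplitude parameter: sending $\mu \to 0$ first uses the superlinearity $p>1$ to render the positive $L^{p+1}$ contribution negligible against the negative mass contribution, after which the spatial dilation forces the action to $-\infty$.
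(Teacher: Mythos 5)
Your proof is correct and follows essentially the same strategy as the paper: test functions supported on an infinite edge (so the vertex terms drop out of $Q_{\H}$) are scaled so that the negative mass term dominates while the superlinearity $p>1$ keeps the defocusing $L^{p+1}$ term under control. The only cosmetic difference is that the paper fuses your two parameters into a single anisotropic scaling $\lambda^{1/2}\psi\bigl(\lambda^{\frac{p+1}{2}}x\bigr)$, chosen so the $L^{p+1}$ term is exactly invariant, whereas you decouple amplitude and dilation and take the limits sequentially; both are valid.
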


\begin{proof}
    Let $e$ be one of the infinite edges, identified with $e \cong [0,\infty)$. Let $\psi \in D(Q_{\operatorname{H}})$ be such that $\psi \not \equiv 0$ on $e$, $\psi(0) = 0$ and $\psi \equiv 0$ any other edge of the graph. Then for any $\lambda >0$, 
    \begin{equation*}
        2 S_{\omega}\left(\lambda^{1/2} \psi\left(\lambda^{\frac{p+1}{2}} x \right)\right) = \lambda^\frac{p+3}{2} \| \partial_x \psi \|_{L^2(\G)}^2 + \omega \lambda^\frac{1-p}{2} \| \psi \|_{L^2(\G)}^2 + \frac{2}{p+1}   \| \psi \|_{L^{p+1}(\G)}^{p+1} \to -\infty
    \end{equation*}
    as $\lambda \to 0$, since $p >1$.
\end{proof}

We proceed by obtaining an upper bound for $s_\omega$.
\begin{lem} \label{lem4}
    We have $\omega < - l_{\H}$ if and only if $s_\omega <0$.
\end{lem}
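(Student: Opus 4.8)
The plan is to exploit the scaling structure of the action functional together with the fact that, on the linear space $D(Q_{\H})$, the only term in $S_\omega$ capable of being negative is the shifted quadratic form
\[
    Q_\omega(\psi) := Q_{\H}(\psi) + \omega \| \psi \|_{L^2(\G)}^2.
\]
Writing $S_\omega(\psi) = \tfrac12 Q_\omega(\psi) + \tfrac{1}{p+1}\| \psi \|_{L^{p+1}(\G)}^{p+1}$, the $L^{p+1}$-contribution is nonnegative (this is precisely where the defocusing sign enters), so the sign of $s_\omega$ is governed entirely by whether $Q_\omega$ takes a negative value on $D(Q_{\H})$. The condition $\omega < -l_{\H}$ is exactly the statement that $\inf\{Q_\omega(\psi) : \| \psi \|_{L^2(\G)} = 1\} = l_{\H} + \omega < 0$.

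For the implication $\omega < -l_{\H} \Rightarrow s_\omega < 0$, I would first use the definition of $l_{\H}$ as an infimum to select $\psi_0 \in D(Q_{\H})$ with $\| \psi_0 \|_{L^2(\G)} = 1$ and $Q_{\H}(\psi_0) < -\omega$, so that $Q_\omega(\psi_0) < 0$. Since $D(Q_{\H})$ is cut out by the homogeneous linear conditions $P_{D,v}\Psi(v)=0$, it is invariant under scaling, hence $\lambda \psi_0 \in D(Q_{\H})$ for every $\lambda > 0$. Evaluating
\[
    S_\omega(\lambda \psi_0) = \frac{\lambda^2}{2} Q_\omega(\psi_0) + \frac{\lambda^{p+1}}{p+1} \| \psi_0 \|_{L^{p+1}(\G)}^{p+1},
\]
and using $p+1 > 2$, the negative quadratic term dominates as $\lambda \to 0^+$; thus $S_\omega(\lambda \psi_0) < 0$ for $\lambda$ small, which gives $s_\omega < 0$.

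For the converse $s_\omega < 0 \Rightarrow \omega < -l_{\H}$, I would take any $\psi \in D(Q_{\H})$ with $S_\omega(\psi) < 0$. Because the $L^{p+1}$-term is nonnegative, this forces $Q_\omega(\psi) < 0$; in particular $\psi \not\equiv 0$, so $\| \psi \|_{L^2(\G)} > 0$. Dividing by $\| \psi \|_{L^2(\G)}^2$ and invoking the variational characterization of $l_{\H}$ in \eqref{eqLGamOm} then yields
\[
    l_{\H} \leq \frac{Q_{\H}(\psi)}{\| \psi \|_{L^2(\G)}^2} < -\omega,
\]
which is the desired inequality.

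The argument is elementary; the only points requiring attention are the scale-invariance of $D(Q_{\H})$ (which makes the one-parameter family $\lambda \psi_0$ admissible) and the observation that $p + 1 > 2$ is exactly what lets the quadratic term win in the small-$\lambda$ limit. No genuine obstacle arises, since the defocusing sign removes any competition from the nonlinear term; this lemma is in fact the place where the simplicity of defocusing action minimization—no restriction to a Nehari manifold—becomes transparent.
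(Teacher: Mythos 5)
Your proof is correct and follows essentially the same route as the paper's: a small-amplitude scaling argument for the forward implication (the defocusing $L^{p+1}$-term vanishes at order $\lambda^{p+1}$, losing to the negative quadratic term at order $\lambda^2$), and dropping the nonnegative nonlinear term plus the variational characterization of $l_{\H}$ for the converse. The only structural difference is that the paper splits the forward direction into two cases according to whether $l_{\H}$ is an eigenvalue or lies in the essential spectrum (using an exact eigenfunction in the first case and near-minimizers in the second), whereas your uniform choice of an almost-minimizer $\psi_0$ with $Q_{\H}(\psi_0) < -\omega$ directly from the infimum in \eqref{eqLGamOm} handles both situations at once --- a mild streamlining rather than a different method.
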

\begin{proof}
    Suppose  $\omega < - l_{\H}$. We consider first the case $l_{\H} \in \sigma_{dis}(\H)$.   Let $\psi$ be such that $\| \psi \|_{L^2(\G)}^2 = 1$ and  
    \begin{equation*}
        \H \psi = l_{\H} \psi.
    \end{equation*}
    Then for any $\mu \in \R$
    \begin{multline*}
        S_\omega(\mu \psi) = \mu^2 \left( Q_{\H}(\psi) - l_{\H} \| \psi \|_{L^2(\G)}^2 \right) + \mu^2 (\omega + l_{\H}) \| \psi \|_{L^2(\G)}^2 + \frac{\mu^{p+1}}{p+1}\| \psi \|_{L^{p+1}(\G)}^{p+1} \\
        = \mu^2 (\omega + l_{\H}) \| \psi \|_{L^2(\G)}^2 + \frac{\mu^{p+1}}{p+1}\| \psi \|_{L^{p+1}(\G)}^{p+1} \to 0^{-}
    \end{multline*}
    as $\mu \to 0$ since $\omega + l_{\H} <0$ and $p > 1$. Thus $s_\omega < 0$.
    
    On the other hand, if $l_{\H} \in \sigma_{ess}(\H)$, then let $(\psi_n)_{n \in \mathbb N}$ be a minimizing sequence for $l_{\H}$. Since $Q(\psi_n) \to l_{\H}$ as $n \to \infty$, one can repeat the same proof by take $\psi_n$ with $n$ big enough.
    
    For the opposite implication, suppose $s_{\omega} <0$. Then there exists $\psi$ such that $S_\omega(\psi) <0$. Then 
    \begin{equation*}
        Q_{\H}(\psi) - l_{\H}\| \psi \|_{L^2(\G)}^2 + (\omega + l_{\H}) \| \psi \|_{L^2(\G)}^2 < 0. 
    \end{equation*}
    By definition of $l_{\H}$, $Q_{\H}(\psi) - l_{\H}\| \psi \|_{L^2(\G)}^2 \geq 0$. Thus $\omega < - l_{\H}$.
\end{proof}

Finally, we are ready to prove the criterion stated in Theorem \ref{thmCriterion}. We will separate the cases $\omega > 0$ and $\omega = 0$ as the second case is more complicated. We start with the first in the following Lemma. 
\begin{lem}\label{lemExisGSAc}  
    Let $\G$ be a non-compact graph and $\omega \neq 0$. If $s_\omega \in (-\infty,0)$, then a minimizer of \eqref{eqMinAction} exists.
\end{lem}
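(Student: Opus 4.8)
The plan is to run a direct method of the calculus of variations, exploiting the fact that in the defocusing regime every term of $S_\omega$ carries a favorable sign. First I would record that the hypotheses force $\omega>0$: since $s_\omega\in(-\infty,0)$ is in particular finite, Lemmas \ref{lem2} and \ref{lemOmNeg} give $\omega\ge 0$, and the assumption $\omega\neq 0$ then yields $\omega>0$. This strict positivity is exactly what makes the argument go through, and is precisely what is lost in the borderline case $\omega=0$.

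Next I take a minimizing sequence $(\psi_n)\subset D(Q_{\H})$ with $S_\omega(\psi_n)\to s_\omega$. Using the lower bound \eqref{eqBound2} with a fixed $\eps\in(0,1/2)$, and now crucially invoking $\omega>0$ so that the term $\tfrac{\omega}{2}\|\psi_n\|_{L^2(\G)}^2$ has a positive coefficient, the boundedness of $S_\omega(\psi_n)$ simultaneously controls $\|\psi_n'\|_{L^2(\G)}^2$, $\|\psi_n\|_{L^2(\G)}^2$ and $\|\psi_n\|_{L^{p+1}(\G)}^{p+1}$. Hence $(\psi_n)$ is bounded in $H^1(\G)$, and Lemma \ref{lemStrongConv} provides, up to a subsequence, a weak limit $\psi\in D(Q_{\H})$ together with the convergence \eqref{eqStrConv} of the vertex quadratic terms.

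It then remains to prove $S_\omega(\psi)\le s_\omega$. I would split $S_\omega$ into its four pieces: the derivative term $\tfrac12\|\psi'\|_{L^2(\G)}^2$, the vertex term $\tfrac12\sum_{v}\langle\Lambda_v P_{R,v}\Psi(v),P_{R,v}\Psi(v)\rangle$, the mass term $\tfrac{\omega}{2}\|\psi\|_{L^2(\G)}^2$, and the nonlinear term $\tfrac{1}{p+1}\|\psi\|_{L^{p+1}(\G)}^{p+1}$. The derivative and mass terms are weakly lower semicontinuous (using $\omega>0$ for the latter); passing to a further subsequence so that $\psi_n\to\psi$ a.e. on $\G$ by the local compact Sobolev embeddings and applying Fatou's lemma handles the $L^{p+1}$ term; and the vertex term \emph{converges} by \eqref{eqStrConv}. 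Combining these via the superadditivity of the lower limit (the convergent vertex term producing no loss) gives $S_\omega(\psi)\le\liminf_n S_\omega(\psi_n)=s_\omega$, whence $S_\omega(\psi)=s_\omega$. Finally $\psi$ is nontrivial: since $S_\omega(0)=0$ while $s_\omega<0$, the identity $S_\omega(\psi)=s_\omega<0=S_\omega(0)$ forces $\psi\not\equiv 0$.

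The decisive subtlety, and the step I expect to be the crux, is the $L^{p+1}$ term together with the (possibly sign-indefinite) vertex term. On a non-compact graph mass may run away along an infinite edge, so no strong $L^{p+1}$ convergence is available; but because this term enters $S_\omega$ with a \emph{positive} coefficient, only its lower semicontinuity is needed, not its continuity — in sharp contrast with the focusing problem, where this loss of compactness would demand a full concentration-compactness analysis. The potentially negative vertex contribution, which could spoil weak lower semicontinuity of $Q_{\H}$, is neutralized by its strong convergence in Lemma \ref{lemStrongConv}, so it suffices to keep $\tfrac12\|\psi'\|_{L^2(\G)}^2$ and the vertex part separate. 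The only genuine danger, vanishing, is excluded directly by the strict inequality $s_\omega<0$.
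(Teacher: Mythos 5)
Your proposal is correct and follows essentially the same route as the paper: coercivity from \eqref{eqBound2} with $\omega>0$ (obtained from Lemma \ref{lemOmNeg}), weak $H^1$ compactness plus the vertex-term convergence of Lemma \ref{lemStrongConv}, weak lower semicontinuity of the remaining (sign-definite) terms, and nontriviality of the limit forced by $s_\omega<0$. Your term-by-term decomposition with Fatou for the $L^{p+1}$ piece merely spells out what the paper compresses into the phrase ``weak lower semi-continuity,'' so no substantive difference arises.
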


\begin{proof}
    Suppose that $s_\omega \in (-\infty,0)$ and $\omega \neq 0$. This implies $\omega \in (0,-l_{\operatorname{H}})$ by Lemmas \ref{lemOmNeg} and \ref{lem4}. Let $(\psi_n)_{n \in \mathbb N} \subset D(Q_{\operatorname{H}})$ be a minimizing sequence for $s_{\omega} < 0$. Then from \eqref{eqBound2} we get 
    \begin{equation*}
        s_\omega + C \geq \frac{1}{4} \| \psi_n' \|_{L^2(\G)}^2 + \frac{\omega}{2} \| \psi_n \|_{L^2(\G)}^2,
    \end{equation*}
    that is there exists $C > 0$ such that $\| \psi_n \|_{H^1(\G)} \leq C$ for all $n \in \mathbb N$.  By Lemma \ref{lemStrongConv}, there exists $\psi \in D(Q_{\operatorname{H}})$ such that, up to a subsequence, $\psi_n \rightharpoonup \psi$ in $H^1(\G)$ as $n \to \infty$. In particular \eqref{eqStrConv} holds for any $v \in \mathcal V$, which implies that
    \[
        S_{\omega}(\psi) \leq s_{\omega} = \liminf_{n \to \infty} S_{\omega}(\psi_n) < 0
    \]
    by weak lower semi-continuity. In particular $\psi \not \equiv 0$ and, by the definition of $s_{\omega}$, $S_{\omega}(\psi) \geq s_{\omega}$. Thus $S_{\omega}(\psi) = s_{\omega}$ and $\psi_n \to \psi$ strongly in $H^1(\G)$ as $n \to \infty$ by lower semi-continuity.
\end{proof}
We now turn to the case $\omega = 0$. The key difference here is that \eqref{eqBound2} no longer directly provides a bound on the $L^2$ norm, making it nontrivial to show that the weak limit of the minimizing sequence lies in $H^1(\G)$. To overcome this problem, we first work in the intermediate weaker space $\dot{H}^1(\G) \cap L^{p+1}(\G)$ and establish the existence of a genuine minimizer there. We then show that this minimizer belongs to $L^2(\G)$ for $p < 5$, while for $p \geq 5$ it does only if the solution is supported on the compact core.
\begin{lem}\label{lemNew}
     Let $p \in (1,5)$. Then $s_0$ admits a minimizer
\end{lem}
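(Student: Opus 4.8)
The plan is to relax the problem to the larger space
\[
    X = \{\psi \in \dot{H}^1(\G) \cap L^{p+1}(\G) : P_{D,v}\Psi(v) = 0 \ \text{ for all } v \in \mathcal V\},
\]
to solve the minimization there, and then recover membership in $D(Q_{\H})$ a posteriori. Since $H^1(\G) \hookrightarrow L^{p+1}(\G)$, we have $D(Q_{\H}) \subset X$, so $\tilde s_0 := \inf_{X} S_0 \le s_0$; by Lemmas \ref{lem2} and \ref{lem4} we are in the case $s_0 \in (-\infty,0)$ (equivalently $l_{\H} < 0$). First I would take a minimizing sequence $(\psi_n) \subset X$ for $\tilde s_0$. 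The bound \eqref{eqBound2} with $\omega = 0$ does not involve the $L^2$-norm, and its derivation uses only local $H^1$ estimates near the vertices, so it continues to hold on $X$ and yields uniform bounds on $\|\psi_n'\|_{L^2(\G)}$ and $\|\psi_n\|_{L^{p+1}(\G)}$. Near each vertex, Hölder's inequality on the finite interval $[v,v+L]$ converts the $L^{p+1}$-bound into an $L^2$-bound, giving uniform local $H^1$ control and hence uniform bounds on the vertex vectors $\Psi_n(v)$.

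Next I would pass to the weak limit. Extracting a subsequence, $\psi_n' \rightharpoonup \psi'$ in $L^2(\G)$ and $\psi_n \rightharpoonup \psi$ in $L^{p+1}(\G)$ (which is reflexive), while the local $H^1$ bounds and Rellich's theorem, together with a diagonal extraction over an exhaustion of the infinite edges, give $\psi_n \to \psi$ uniformly on compact sets, in particular $\Psi_n(v) \to \Psi(v)$. Thus $\psi \in X$ with $P_{D,v}\Psi(v) = 0$, and the vertex term converges by the argument of Lemma \ref{lemStrongConv}, which relies only on local $H^1$ compactness. Weak lower semicontinuity of $\psi \mapsto \|\psi'\|_{L^2(\G)}^2$ and of $\psi \mapsto \|\psi\|_{L^{p+1}(\G)}^{p+1}$ then gives $S_0(\psi) \le \liminf_n S_0(\psi_n) = \tilde s_0$, and since $\psi \in X$ we conclude $S_0(\psi) = \tilde s_0$. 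Because $\tilde s_0 < 0 = S_0(0)$, the minimizer $\psi$ is nontrivial.

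The crux is to upgrade this relaxed minimizer to an element of $D(Q_{\H})$, i.e.\ to prove $\psi \in L^2(\G)$, and this is exactly the step where $p < 5$ enters. As an unconstrained minimizer of the $C^1$ functional $S_0$ over the linear class $\{P_{D,v}\Psi(v) = 0\}$, $\psi$ solves the Euler--Lagrange equation \eqref{eqStationaryNLS} with $\omega = 0$, i.e.\ $-\psi_e'' + |\psi_e|^{p-1}\psi_e = 0$ on each edge together with the natural vertex conditions. On an infinite edge identified with $[0,\infty)$, the energy $E_e = \tfrac12|\psi_e'|^2 - \tfrac{1}{p+1}|\psi_e|^{p+1}$ is conserved also in the complex case; evaluating it along a sequence $x_n \to \infty$ with $|\psi_e(x_n)| \to 0$ (which exists since $\psi_e \in L^{p+1}$) shows $E_e \ge 0$, while $E_e > 0$ would force $|\psi_e'| \ge \sqrt{2E_e}$ everywhere, contradicting $\psi_e' \in L^2$; hence $E_e = 0$. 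Then $|\psi_e'| = \sqrt{2/(p+1)}\,|\psi_e|^{(p+1)/2}$, so the conserved angular momentum $\mathrm{Im}(\overline{\psi_e}\psi_e')$, bounded by $|\psi_e||\psi_e'|$, vanishes along $x_n$ and therefore identically; thus $\psi_e$ has constant phase and $\rho = |\psi_e|$ solves $\rho' = -\sqrt{2/(p+1)}\,\rho^{(p+1)/2}$ on the tail. Integrating yields the algebraic decay $|\psi_e(x)| \lesssim (1+x)^{-2/(p-1)}$, which lies in $L^2([0,\infty))$ precisely when $4/(p-1) > 1$, that is $p < 5$.

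Since every infinite-edge component of $\psi$ is then in $L^2$ and the finite edges carry finite measure, $\psi \in L^2(\G)$, whence $\psi \in H^1(\G)$ and $\psi \in D(Q_{\H})$. Consequently $s_0 \le S_0(\psi) = \tilde s_0 \le s_0$, so $\tilde s_0 = s_0$ and $\psi$ is a genuine minimizer of \eqref{eqMinAction} for $\omega = 0$. I expect the main obstacle to be precisely this passage from the relaxed minimizer to $L^2$-membership: the Gagliardo--Nirenberg inequality \eqref{eqGN} controls $L^{p+1}$ by $L^2$ and $\dot{H}^1$ but not conversely, so no a priori $L^2$-bound on the minimizing sequence is available, and the needed integrability must instead be extracted from the sharp decay rate of the limiting ODE --- which is exactly what breaks down for $p \ge 5$ (cf.\ Theorem \ref{thmCritCase}).
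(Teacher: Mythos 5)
Your proposal is correct and follows essentially the same route as the paper: relax the minimization to $\dot{H}^1(\G)\cap L^{p+1}(\G)$, produce a relaxed minimizer via the bound \eqref{eqBound2}, weak compactness and local compactness at the vertices, then use the Euler--Lagrange equation and the decay of solutions on the infinite edges to recover $L^2$-membership exactly when $p<5$. The only differences are in execution, not in strategy: you derive the $x^{-2/(p-1)}$ decay self-containedly from the conserved quantities of the edge ODE (energy and angular momentum, forcing zero energy and constant phase), where the paper invokes the explicit decaying solution \eqref{eqSol2} and its uniqueness from \cite{KaOh09}, and you (correctly, and slightly more carefully than the paper) retain the linear vertex constraints $P_{D,v}\Psi(v)=0$ in the relaxed space so that the relaxed minimizer lands in $D(Q_{\H})$ once it is shown to lie in $H^1(\G)$.
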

\begin{proof}
    First, we slightly change the minimizing problem. Let us define 
    \begin{equation}
        \label{eqMinProb4}
        \tilde s = \inf \{S_0(u) : u \in \dot{H}^1(\G) \cap L^{p+1}(\G) \}
    \end{equation}
    Let $(\psi_n)_{n \in \mathbb N}$ be a minimizing sequence for $\tilde s$. By \eqref{eqBound2}, we see that this sequence is bounded in the $\dot{H}^1(\G)$ norm and the $L^{p+1}(\G)$, and the space $\dot{H}^1(\G) \cap L^{p+1}(\G)$ is reflexive. Thus, up to subsequences, there exists a weak limit $\psi \in \dot{H}^1(\G) \cap L^{p+1}(\G)$ such that $\psi_n \rightharpoonup \psi$ as $n \to \infty$ in $\dot{H}^1(\G) \cap L^{p+1}(\G)$. Notice that $(\psi_n)_{n \in \mathbb N} \subset H^1_{loc}(\G)$. By the theorem of Ascoli-Arzela, this implies that, up to subsequences, we get 
    \begin{equation*}
        \sum_{v \in \mathcal V} \langle \Lambda_v P_{R,v} \Psi_n(v), P_{R,v} \Psi_n(v) \rangle_{\mathbb C^{d_v}} \to \sum_{v \in \mathcal V} \langle \Lambda_v P_{R,v} \Psi(v), P_{R,v} \Psi(v) \rangle_{\mathbb C^{d_v}}
    \end{equation*}
    as $n \to \infty$, that is the contribution of the vertex conditions in the quadratic form defined in \eqref{eqQuadFormDef} converges point-wise. Thus, by the weak lower-semicontinuity of the $\dot{H}^1(\G)$ and $L^{p+1}(\G)$ norms, we get 
    \begin{equation*}
        S_0(\psi) \leq \liminf_{n \to \infty} S_0(\psi_n) = \tilde s
    \end{equation*}
    which yields $S_0(\psi) = \tilde s$.
    
    Now we show that $\psi \in \dot{H}^1(\G)$ and $L^{p+1}(\G)$ found above is in $H^1(\G)$ if and only if $ p \in (1,5)$. Indeed, notice that $\psi$ is still a solution to \eqref{eqStationaryNLS} for $\omega =0$, by the classical Euler-Lagrange theory. Moreover, being in $L^{p+1}(\mathcal G)$, it is still decaying at infinity on the infinite edges. The only (real and positive) decaying solutions are of the form
    \begin{equation}\label{eqSol2}
            \psi_e (x) = \left( \frac{1-p}{\sqrt{2(p+1)}} x + b \right)^{-\frac{2}{p-1}}
        \end{equation}
    for some  $b = b(p, \operatorname{H}) > 0$  depending on the vertex condition, where the infinite edge $e$ is identified with $[0,\infty)$. This can be proven similarly to Theorem $2$ in  \cite{KaOh09}, where it is also shown that these solutions are unique up to a phase shift in the complex plane. Now, by a direct computation, one can check that $\psi_e$ belongs to $L^2(\R^+)$ for $p \in (1,5)$. As a consequence, for $p \in (1,5)$, $\psi \in H^1(\G)$. As $H^1(\G) \subset \dot{H}^1(\G) \cap L^{p+1}(\G) $, we have a priori that $\tilde {s} \leq s_0$, we thus obtain $s_0 = \tilde s$ for $p \in (1,5)$ which concludes the proof. 
\end{proof}
Finally, we observe that the obstruction for $p \geq 5$ arises from the fact that the unique decaying solution at infinity, given by \eqref{eqSol2}, does not belong to $L^2(\mathbb{R}^+)$. Consequently, the only possibility for a nontrivial $H^1(\G)$ solution to \eqref{eqStationaryNLS} in this regime is to be compactly supported, i.e., entirely contained within the compact core and localized on edges of finite length.

\begin{proof}[Proof of Theorem \ref{thmCritCase}]
    The proof follows closely that of Lemma \ref{lemNew} above. Indeed, by direct computation, one can see that the unique nontrivial solution allowed on infinite edges in \eqref{eqSol2} is not in $L^2(\R^+)$. Thus, either the solution is not in  $H^1(\G)$, or it has to be equal to zero on infinite edges. 
\end{proof}

We show the opposite implication of the criterion stated in Theorem \ref{thmCriterion}. 
\begin{lem}\label{lemOppos}
    Let $\G$ be a non-compact graph. If a minimizer of \eqref{eqMinAction} exists, then $s_\omega \in (-\infty, 0)$.
\end{lem}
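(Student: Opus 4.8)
The plan is to exploit the freedom to rescale a minimizer along the ray $\mu \mapsto \mu\psi$. Unconstrained minimality of $\psi$ then turns into a single algebraic identity (the Nehari/Pohozaev relation), from which the sign of $s_\omega$ can be read off directly. The left endpoint of the interval is essentially free: if $\psi \in D(Q_{\H})$ attains the infimum in \eqref{eqMinAction}, then $s_\omega = S_\omega(\psi)$ is a finite real number, so $s_\omega > -\infty$ with no further work. (Equivalently, by Lemma \ref{lemOmNeg}, finiteness of $s_\omega$ on a non-compact graph already forces $\omega \geq 0$.)

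The substance lies in the strict bound $s_\omega < 0$. Since $D(Q_{\H})$ is a linear subspace and the Dirichlet condition $P_{D,v}\Psi(v)=0$ is homogeneous, $\mu\psi \in D(Q_{\H})$ for every $\mu > 0$. I would introduce
\[
    g(\mu) = S_\omega(\mu\psi) = \frac{\mu^2}{2}\bigl(Q_{\H}(\psi) + \omega\|\psi\|_{L^2(\G)}^2\bigr) + \frac{\mu^{p+1}}{p+1}\|\psi\|_{L^{p+1}(\G)}^{p+1}.
\]
Because $\psi$ minimizes $S_\omega$ over all of $D(Q_{\H})$, one has $g(\mu) \geq s_\omega = g(1)$ for every $\mu>0$, so $\mu=1$ is an interior global minimum of the smooth function $g$ on $(0,\infty)$. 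Hence $g'(1)=0$, which reads
\[
    Q_{\H}(\psi) + \omega\|\psi\|_{L^2(\G)}^2 = -\|\psi\|_{L^{p+1}(\G)}^{p+1}.
\]
Substituting this back into $g(1)$ collapses the quadratic contribution and yields
\[
    s_\omega = S_\omega(\psi) = \Bigl(\frac{1}{p+1} - \frac{1}{2}\Bigr)\|\psi\|_{L^{p+1}(\G)}^{p+1} = -\frac{p-1}{2(p+1)}\|\psi\|_{L^{p+1}(\G)}^{p+1}.
\]

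Since $p>1$, the prefactor is strictly negative, so $s_\omega \leq 0$, with equality precisely when $\psi \equiv 0$. As a ground state is by definition nontrivial, $\|\psi\|_{L^{p+1}(\G)} > 0$, and therefore $s_\omega < 0$, completing $s_\omega \in (-\infty,0)$. I expect the only genuinely delicate point to be the degenerate case $\psi \equiv 0$, for which $S_\omega(0)=0=s_\omega$ and the identity above carries no information; excluding it is exactly where the convention that a minimizer is a \emph{nontrivial} ground state enters, and it dovetails with Lemma \ref{lem4}, which records that $s_\omega<0$ is equivalent to $\omega < -l_{\H}$. Everything else — the membership $\mu\psi \in D(Q_{\H})$, the smoothness of $g$ on $(0,\infty)$, and the passage from global minimality to the stationarity condition $g'(1)=0$ — is routine.
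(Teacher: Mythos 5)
Your proof is correct, and it rests on the same basic device as the paper's: testing the action along the ray $\mu \mapsto \mu\psi$ through the minimizer. The execution differs, however. The paper argues by contradiction in two cases: if $s_\omega>0$, then $S_\omega(\mu\psi)\to 0$ as $\mu\to 0$ contradicts the definition of the infimum; if $s_\omega=0$, the identity $S_\omega(\mu\psi) = -\tfrac{\mu^2-\mu^{p+1}}{p+1}\|\psi\|_{L^{p+1}(\G)}^{p+1}$ produces strictly negative values for $\mu\in(0,1)$ unless $\psi\equiv 0$. You instead extract the first-order condition $g'(1)=0$ (a Nehari-type identity) from interior minimality along the ray and substitute it back, obtaining directly $s_\omega = -\tfrac{p-1}{2(p+1)}\|\psi\|_{L^{p+1}(\G)}^{p+1}<0$ for nontrivial $\psi$. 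This buys slightly more than the lemma asks: your identity is exactly \eqref{eqSOmegaPot}, which the paper establishes separately later (Lemma \ref{prpSOmega}) via the functional $I_\omega$, whereas the paper's two-case argument is marginally more elementary in that it only evaluates the action and never differentiates. Both proofs hinge on the same delicate point, which you flag correctly: when $s_\omega=0$ the zero function attains the infimum in \eqref{eqMinAction}, so the strict inequality genuinely requires the convention that a minimizer (ground state) is nontrivial; the paper invokes the very same convention when it writes that $\psi\equiv 0$ is ``excluded from the definition.''
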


\begin{proof}
    Let $\psi$ be a ground state. By contradiction, suppose first that $s_{\omega} > 0$. Observe that for any $\mu >0$,
    \begin{equation*}
        S_{\omega}(\mu \psi) = \frac{\mu^2}{2} Q_{\H}(\psi) + \frac{\omega \mu^2}{2} \| \psi\|_{L^2(\G)}^2  + \frac{\mu^{p+1}}{p+1} \| \psi \|_{L^{p+1}(\G)}^{p+1}  \to 0
    \end{equation*}
    as $\mu \to 0$. This contradicts the definition of $ s_{\omega}$. Therefore $s_\omega < 0$.
    
    Now suppose instead $s_{\omega} = 0$ and $\psi \not \equiv 0$. Then observe that
    \begin{equation*}
        S_{\omega}(\mu \psi) = \frac{\mu^2}{2} S_{\omega}(\psi) - \frac{\mu^2 - \mu^{p+1}}{p+1} \| \psi \|_{L^{p+1}(\G)}^{p+1} = - \frac{\mu^2 - \mu^{p+1}}{p+1} \| \psi \|_{L^{p+1}(\G)}^{p+1}
    \end{equation*}
    Then either $\psi \equiv 0$ which is excluded from the definition of $s_{\omega}$ or there exists $\mu^* >0$ such that for any $0 < \mu < \mu^*$, $S_{\omega}(\mu \psi) < 0$. The second case leads to a contradiction with the definition of $s_{\omega}$.
\end{proof}

Finally, we prove our main result.
\begin{proof}[Proof of Theorem \ref{thmCriterion}]
    The proof is a combination of Lemmas  \ref{lem2}, \ref{lemOmNeg}, \ref{lem4}, \ref{lemExisGSAc}, \ref{lemNew} and \ref{lemOppos}.
\end{proof} 

As the action ground states are unconstrained minimizers of the action, which is preserved along the flow of equation \eqref{eqTimeDepNLS}, we can deduce the stability of the set of action ground states.
\begin{lem}\label{lemStabGSAc}
    The set of action ground states $\mathcal A_\omega$ is stable.
\end{lem}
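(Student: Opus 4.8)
The plan is to argue by contradiction via the classical Cazenave--Lions compactness scheme, the crucial ingredient being that \emph{every} minimizing sequence for $s_\omega$ is precompact in $H^1(\G)$, a fact already contained in the proof of Lemma~\ref{lemExisGSAc}. Suppose $\mathcal{A}_\omega$ were not stable. Then there would exist $\varepsilon_0 > 0$, a sequence of initial data $(u_{0,n})_n \subset D(Q_{\H})$, and times $(t_n)_n \subset [0,\infty)$ such that
\[
    \inf_{\psi \in \mathcal{A}_\omega} \| u_{0,n} - \psi \|_{H^1(\G)} \to 0 \quad \text{as } n \to \infty,
\]
while the corresponding solutions $u_n$ to \eqref{eqTimeDepNLS} satisfy
\[
    \inf_{\psi \in \mathcal{A}_\omega} \| u_n(t_n, \cdot) - \psi \|_{H^1(\G)} \geq \varepsilon_0 \quad \text{for all } n.
\]

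First I would turn $(u_n(t_n,\cdot))_n$ into a minimizing sequence for $s_\omega$. Pick $\phi_n \in \mathcal{A}_\omega$ with $\| u_{0,n} - \phi_n \|_{H^1(\G)} \to 0$; since all minimizers are uniformly bounded in $H^1(\G)$ (apply the coercivity estimate \eqref{eqBound2} to $S_\omega(\phi_n) = s_\omega$), the datum $u_{0,n}$ stays in a fixed bounded set, on which the continuity of $Q_{\H}$ and of the $L^{p+1}$- and $L^2$-norms (through trace and Sobolev embeddings) makes $S_\omega$ uniformly continuous, so that $S_\omega(u_{0,n}) \to S_\omega(\phi_n) = s_\omega$. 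Because the flow preserves $D(Q_{\H})$ and conserves both $M$ and $E$, hence the action $S_\omega = E + \omega M$ (Proposition~\ref{propLocalWP}), we obtain
\[
    S_\omega\!\left( u_n(t_n,\cdot) \right) = S_\omega(u_{0,n}) \to s_\omega \quad \text{as } n \to \infty,
\]
so that $v_n := u_n(t_n,\cdot) \in D(Q_{\H})$ is indeed a minimizing sequence for \eqref{eqMinAction}.

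Finally I would invoke the compactness already established in Lemma~\ref{lemExisGSAc}: such a minimizing sequence is bounded in $H^1(\G)$ by \eqref{eqBound2}, its weak limit is an action ground state, and equality of the action in the limit upgrades weak convergence to strong $H^1(\G)$-convergence. Thus every subsequence of $(v_n)_n$ has a further subsequence converging strongly in $H^1(\G)$ to some element of $\mathcal{A}_\omega$, which forces $\inf_{\psi \in \mathcal{A}_\omega}\| v_n - \psi \|_{H^1(\G)} \to 0$ and contradicts the lower bound $\varepsilon_0$. The step requiring the most care is this precompactness: one must check that the argument of Lemma~\ref{lemExisGSAc} applies to the \emph{arbitrary} minimizing sequence $(v_n)_n$ and delivers convergence to the \emph{set} $\mathcal{A}_\omega$ rather than to a single fixed minimizer (the standard ``every subsequence has a convergent further subsequence'' device), and, in the borderline case $\omega = 0$ with $p \in (1,5)$, that the weak limit genuinely lies in $L^2(\G)$, which relies on the decay analysis of Lemma~\ref{lemNew}.
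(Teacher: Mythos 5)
Your proposal is correct and follows essentially the same route as the paper's proof: the Cazenave--Lions contradiction argument, conservation of the action turning $u_n(t_n,\cdot)$ into a minimizing sequence for $s_\omega$, and then the precompactness of minimizing sequences established in Lemma~\ref{lemExisGSAc} to extract strong $H^1(\G)$-convergence to an element of $\mathcal{A}_\omega$, contradicting the assumed lower bound. Your additional care about the continuity of $S_\omega$ at the initial data and about the borderline case $\omega = 0$ only makes explicit steps the paper leaves implicit.
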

\begin{proof}
    It is a standard result, in the spirit of the orbital stability obtained in \cite{CaLi82}. We report the short proof. Suppose, by contradiction, that the set of grounds states is not stable. Thus there exists $\eps >0$ and $(u_0^{(n)})_{n \in \mathbb N} \subset D(Q_{\operatorname{H}})$ a set of initial condition such that
    \begin{equation*}
        \inf_{\phi_\omega \in \mathcal{A}_\omega} \| u_0^{(n)} - \phi_\omega \|_{H^1(\mathcal G)} \to 0 \text{ as } n \to \infty
    \end{equation*}
    while there exists $(t_n)_{n \in \mathbb N} \subset \R$ such that 
    \begin{equation}\label{eqAbs1}
        \inf_{\phi_\omega \in \mathcal{A}_\omega} \| u_{n}(t_n) - \phi_\omega \|_{H^1(\mathcal G)} \geq \eps \text{ for all } n \in \mathbb N,
    \end{equation}
    where $(u_n)_{n \in \mathbb N}$ are solutions of \eqref{eqTimeDepNLS} stemming from $u_0^{(n)}$. Due to the conservation of the action, we get $S_\omega(u_n(t_n)) = S_\omega(u_0^{(n)}) \to s_\omega$ as $n \to \infty$. Thus $(u_n(t_n))_{n \in \mathbb N}$ is a minimizing sequence for $s_\omega$. Then with the same argument as in the proof of Lemma \ref{lemExisGSAc}, one gets that $u_n(t_n)$ converges strongly to one action ground state up to a subsequence, which violates \eqref{eqAbs1}.
\end{proof}

Next, we study the function $\omega \mapsto s_\omega$.
\begin{lem}\label{prpSOmega}
    Assume $l_{\H} <0$ and $\omega \in [0, -l_{\operatorname{H}})$ for $p \in (1,5)$, or $\omega \in (0,-l_{\H})$ for $p \geq 5$. The function $\omega \mapsto s_\omega$ is continuous and strictly increasing. Moreover, for any $\phi_\omega \in \mathcal A_\omega$, we have
        \begin{equation}\label{eqSOmegaPot}
            s_\omega = - \frac{p-1}{2(p+1)} \| \phi_\omega \|_{L^{p+1}(\G)}^{p+1} = \frac{p-1}{2(p+1)} \left( Q_{\operatorname{H}}(\phi_\omega) + \omega \| \phi_\omega\|_{L^2(\G)}^2 \right).
        \end{equation}
        In particular for $\omega_1 < \omega_2$, $\phi_{\omega_1} \in \mathcal A_{\omega_1}$, $\phi_{\omega_2} \in \mathcal A_{\omega_2}$,  we get $\| \phi_{\omega_1} \|_{L^{p+1}(\G)} > \| \phi_{\omega_2} \|_{L^{p+1}(\G)} $.
\end{lem}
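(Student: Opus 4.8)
The plan is to first derive the algebraic identity \eqref{eqSOmegaPot}, and then read off strict monotonicity, the $L^{p+1}$ comparison, and continuity from it together with a concavity argument. Let $\phi_\omega \in \mathcal A_\omega$, which exists under the hypotheses by Theorem \ref{thmCriterion} (and Lemma \ref{lemNew} when $\omega=0$, $p\in(1,5)$). Since the domain $D(Q_{\H})$ is a linear subspace cut out by the homogeneous conditions $P_{D,v}\Psi(v)=0$, the scaled function $\mu\phi_\omega$ lies in $D(Q_{\H})$ for every $\mu\in\R$, and $\mu\mapsto S_\omega(\mu\phi_\omega)=\tfrac{\mu^2}{2}\big(Q_{\H}(\phi_\omega)+\omega\|\phi_\omega\|_{L^2(\G)}^2\big)+\tfrac{\mu^{p+1}}{p+1}\|\phi_\omega\|_{L^{p+1}(\G)}^{p+1}$ is a smooth function of $\mu$ with a global minimum at $\mu=1$. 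Setting its derivative at $\mu=1$ to zero gives the Nehari-type relation $Q_{\H}(\phi_\omega)+\omega\|\phi_\omega\|_{L^2(\G)}^2=-\|\phi_\omega\|_{L^{p+1}(\G)}^{p+1}$; substituting this into $s_\omega=S_\omega(\phi_\omega)$ and using $\tfrac{1}{p+1}-\tfrac12=-\tfrac{p-1}{2(p+1)}$ yields exactly \eqref{eqSOmegaPot}.

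For strict monotonicity, take $\omega_1<\omega_2$ and use $\phi_{\omega_2}\in\mathcal A_{\omega_2}$ as a competitor in the variational problem for $s_{\omega_1}$. Since $S_{\omega_1}(\psi)=S_{\omega_2}(\psi)+(\omega_1-\omega_2)M(\psi)$, this gives $s_{\omega_1}\le s_{\omega_2}+(\omega_1-\omega_2)M(\phi_{\omega_2})$. As $\phi_{\omega_2}$ is a nontrivial ground state (because $s_{\omega_2}<0$), we have $M(\phi_{\omega_2})>0$, so the last term is strictly negative and $s_{\omega_1}<s_{\omega_2}$. The comparison of $L^{p+1}$ norms is then immediate: the first equality in \eqref{eqSOmegaPot} reads $\|\phi_\omega\|_{L^{p+1}(\G)}^{p+1}=-\tfrac{2(p+1)}{p-1}\,s_\omega$, a strictly decreasing function of $s_\omega$, so $\omega_1<\omega_2$ forces $\|\phi_{\omega_1}\|_{L^{p+1}(\G)}>\|\phi_{\omega_2}\|_{L^{p+1}(\G)}$ after taking $(p+1)$-th roots.

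For continuity, observe that $s_\omega=\inf_{\psi}\big(E(\psi)+\omega M(\psi)\big)$ is a pointwise infimum of functions affine in $\omega$, hence concave in $\omega$; by Lemmas \ref{lem2} and \ref{lem4} it is finite on the given interval, so it is automatically continuous on the open interval $(0,-l_{\H})$. When $p\in(1,5)$ the domain includes the endpoint $\omega=0$, where concavity alone does not yield right-continuity, so I close this gap by a sandwich: monotonicity gives $\liminf_{\omega\to0^+}s_\omega\ge s_0$, while testing $s_\omega$ against a ground state $\phi_0\in\mathcal A_0$ gives $s_\omega\le S_\omega(\phi_0)=s_0+\omega M(\phi_0)$, whence $\limsup_{\omega\to0^+}s_\omega\le s_0$; the two bounds force $\lim_{\omega\to0^+}s_\omega=s_0$.

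The step requiring the most care is the endpoint continuity at $\omega=0$ in the subcritical case, since concavity controls only the interior of the domain and a priori a concave function may jump down at an included endpoint; the auxiliary test function $\phi_0$ resolves it. The only other point to verify carefully is in the first step, namely that the scaling competitor $\mu\phi_\omega$ remains admissible, which holds precisely because the vertex conditions defining $D(Q_{\H})$ are linear and homogeneous.
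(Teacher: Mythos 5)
Your proposal is correct, and it diverges from the paper's proof in ways worth recording. For the identity \eqref{eqSOmegaPot}, both arguments reduce to the Nehari-type relation $Q_{\operatorname{H}}(\phi_\omega)+\omega\|\phi_\omega\|_{L^2(\G)}^2+\|\phi_\omega\|_{L^{p+1}(\G)}^{p+1}=0$: the paper invokes it as a consequence of the Euler--Lagrange equation (see \eqref{eqEquiv1}), while you derive it self-containedly by differentiating $\mu\mapsto S_\omega(\mu\phi_\omega)$ at its global minimum $\mu=1$; same content, equally valid. For strict monotonicity, the paper sets up the four-term chain \eqref{eqSComparison1} and justifies strictness of the outer inequalities by ruling out a profile solving \eqref{eqStationaryNLS} for two distinct frequencies, whereas you only use the middle inequality --- $s_{\omega_1}\le S_{\omega_1}(\phi_{\omega_2})=s_{\omega_2}+(\omega_1-\omega_2)M(\phi_{\omega_2})<s_{\omega_2}$ since $M(\phi_{\omega_2})>0$ --- which is leaner and avoids the uniqueness-flavoured argument entirely. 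The most genuine difference is continuity: the paper proves it via the direct estimate \eqref{eqSOmegaCont}, a local Lipschitz bound that requires a ground state at the base frequency (and, for two-sided continuity, implicitly a uniform control of the masses $M(\phi_{\omega_1})$ as $\omega_1$ varies), while you observe that $s_\omega$ is a pointwise infimum of functions affine in $\omega$, hence concave and finite by Lemmas \ref{lem2} and \ref{lem4}, hence continuous on the open interval --- no minimizers needed --- and you then close the endpoint $\omega=0$ (included in the statement when $p\in(1,5)$) by an explicit sandwich using $\phi_0$. This endpoint is not explicitly addressed in the paper's proof, which takes $\omega_1,\omega_2\in(0,-l_{\operatorname{H}})$, so your treatment is actually the more complete one there; what the paper's estimate buys in exchange is a quantitative (locally Lipschitz) modulus of continuity rather than bare continuity.
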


\begin{proof}
    Let $\omega_1,\omega_2 \in (0,-l_{\operatorname{H}})$ such that $\omega_1 < \omega_2$, and $\phi_{\omega_1} \in \mathcal{A}_{\omega_1}$, $\phi_{\omega_2} \in \mathcal{A}_{\omega_2}$. Then observe that
    \begin{equation}\label{eqSComparison1}
        S_{\omega_1}(\phi_{\omega_1}) <  S_{\omega_1}(\phi_{\omega_2}) < S_{\omega_2}(\phi_{\omega_2}) < S_{\omega_2}(\phi_{\omega_1}).
    \end{equation}
    The equalities are excluded since one of the profiles would otherwise satisfy the stationary equation \eqref{eqStationaryNLS} for both $\omega_1$ and $\omega_2$. This implies that $s_{\omega_2} > s_{\omega_1}$.

    Moreover, we have 
    \begin{equation}\label{eqSOmegaCont}
        |  S_{\omega_2}(\phi_{\omega_2}) -  S_{\omega_1}(\phi_{\omega_1}) | < |S_{\omega_2}(\phi_{\omega_1}) - S_{\omega_1}(\phi_{\omega_1})| = | \omega_2 - \omega_1| \| \phi_{\omega_1}\|_{L^2(\G)}^2 \to 0
    \end{equation}
    as $\omega_2 \to \omega_1$.

    We observe that, for any $\omega\in (0, -l_{\H})$, we have 
    \begin{equation} \label{eqEquiv1}
        \begin{aligned}
            s_{\omega}
            = \inf_{\psi \in D(Q_{\operatorname{H}})} S_\omega (\psi)
            & = \inf_{\psi \in D(Q_{\operatorname{H}}), I_\omega (\psi) = 0} S_\omega (\psi) \\
            & = \inf_{\psi \in D(Q_{\operatorname{H}}), I_\omega (\psi) = 0} - \frac{p-1}{2(p+1)} \| \psi \|_{L^{p+1}(\G)}^{p+1} \\
            & = \inf_{\psi \in D(Q_{\operatorname{H}}), I_\omega (\psi) = 0} \frac{p-1}{2(p+1)} \left( Q_{\operatorname{H}}(\psi) + \omega \| \psi \|_{L^2(\G)}^2 \right),
        \end{aligned}
    \end{equation}
    where 
    \begin{equation*}
        I_\omega(u) = Q_{\operatorname{H}}(\psi) + \| \psi \|_{L^{p+1}(\G)}^{p+1} + \omega \| \psi \|_{L^2(\G)}^2.
    \end{equation*}
    Indeed, the second equality in \eqref{eqEquiv1} follows from the fact that any $\psi \in \mathcal{A}_{\omega}$ satisfies $I_\omega(\psi) = 0$. The other inequalities follow from combining $S_\omega(\psi)$ with $I_\omega(\psi)$. 
\end{proof}

\begin{proof}[Proof of Corollary \ref{corStability}]
    The proof is a combination of Lemmas \ref{lemStabGSAc} and \ref{prpSOmega}.
\end{proof} 

\subsection{Energy ground states}

We turn our attention to the set of energy ground states. We show first that action ground states are also energy ground states. 

\begin{lem}\label{lemAcToEn}
   Let $\phi_\omega \in \mathcal{A}_\omega$, and $m = M(\phi_\omega)$. Then $\phi_\omega \in \mathcal{B}_m$,
\end{lem}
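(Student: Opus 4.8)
The plan is to exploit the fact that, on the mass-constraint surface $\{M(\psi) = m\}$, the action $S_\omega$ and the energy $E$ differ only by the fixed constant $\omega m$. Since $\phi_\omega$ is an \emph{unconstrained} minimizer of $S_\omega$ over all of $D(Q_{\H})$, this observation will immediately force $\phi_\omega$ to be a \emph{constrained} minimizer of $E$ at mass $m$, with essentially no analysis required.

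Concretely, I would start from the defining inequality $S_\omega(\phi_\omega) \leq S_\omega(\psi)$, which holds for every $\psi \in D(Q_{\H})$ precisely because $\phi_\omega \in \mathcal{A}_\omega$. Writing $S_\omega = E + \omega M$ and restricting attention to test functions $\psi$ with $M(\psi) = m = M(\phi_\omega)$, the two $\omega M$ contributions coincide and cancel, leaving $E(\phi_\omega) \leq E(\psi)$ for every such $\psi$. Taking the infimum over the admissible set $\{M(\psi) = m\}$ then yields $E(\phi_\omega) \leq e_m$. For the reverse inequality I would simply note that $\phi_\omega$ is itself an admissible competitor in the variational problem \eqref{eqEnMinProb}, since by hypothesis $M(\phi_\omega) = m$; hence $E(\phi_\omega) \geq e_m$ by definition of the infimum. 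Combining the two bounds gives $E(\phi_\omega) = e_m$ together with $M(\phi_\omega) = m$, which is exactly the statement $\phi_\omega \in \mathcal{B}_m$.

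I do not anticipate a genuine obstacle here: the argument is purely algebraic, relying only on the cancellation of the mass term on the constraint surface together with the definitions of $\mathcal{A}_\omega$, $e_m$, and $\mathcal{B}_m$. The one point worth flagging is that this direction (action ground state $\Rightarrow$ energy ground state) never uses the sign of $\omega$, and so goes through for any admissible frequency; by contrast, the genuinely delicate converse — reconstructing the correct multiplier $\omega$ from a prescribed mass $m$ — is what forces the uniqueness discussion in the second part of Theorem \ref{thmExisEnGs}.
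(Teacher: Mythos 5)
Your proposal is correct and is essentially the paper's own argument: the paper phrases it as a contradiction (a competitor $\psi$ with $M(\psi)=m$ and $E(\psi)<E(\phi_\omega)$ would give $S_\omega(\psi)<S_\omega(\phi_\omega)$), while you run the same cancellation of the $\omega M$ term directly and add the trivial reverse inequality $E(\phi_\omega)\geq e_m$. The two are logically identical, and your closing remark that no sign condition on $\omega$ is used is accurate.
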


\begin{proof}
    Let $\phi_\omega$ be an action ground state for some $\omega \in \R$. Suppose by contradiction that there exist $\psi \in D(Q_{\operatorname{H}})$ such that $\| \psi \|_{L^2}= \| \phi_\omega\|_{L^2}$ and $E(\psi) < E(\phi_\omega)$. As an immediate consequence, we get 
    \begin{equation*}
        S_\omega(\psi) < S_\omega(\phi_\omega)
    \end{equation*}
    which contradicts the definition of $\phi_\omega$ as an action minimizer.
\end{proof}

\begin{lem} \label{prpProperties1}
    Let $\phi_1,\phi_2 \in \mathcal A_\omega$. If $ \| \phi_1 \|_{L^2(\G)}^2 = m_1 < m_2 = \| \phi_2 \|_{L^2(\G)}^2$, then $e_{m_1} > e_{m_2}$. Moreover, suppose that for some $m >0$, there exist $\psi_1 \neq \psi_2$, $\psi_1, \psi_2 \in \mathcal B_m$ and the associated Lagrange multipliers $\omega_1 \neq \omega_2$. Then either $\psi_1 \notin \mathcal A_{\omega_1}$ or $\psi_2 \notin \mathcal A_{\omega_2}$.
\end{lem}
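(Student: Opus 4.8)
The statement has two parts. For the first part, I want to show that if $\phi_1,\phi_2\in\mathcal A_\omega$ have masses $m_1<m_2$, then $e_{m_1}>e_{m_2}$. The plan is to exploit the scaling structure of the action functional together with the identities from Lemma \ref{prpSOmega}. Since both $\phi_1$ and $\phi_2$ minimize the same action $S_\omega$, they share the same minimal action value $s_\omega=S_\omega(\phi_1)=S_\omega(\phi_2)$. Writing $S_\omega=E+\omega M$, this gives $E(\phi_1)+\omega m_1=E(\phi_2)+\omega m_2$, hence $E(\phi_1)-E(\phi_2)=\omega(m_2-m_1)$. The sign of $\omega$ will be the crucial ingredient: since action ground states exist only for $\omega\in[0,-l_{\H})$ and here we are in the regime $\omega\ge 0$, and since the two distinct masses force $\omega>0$ (if $\omega=0$ the scaling argument in Lemma \ref{prpSOmega} would couple mass and $L^{p+1}$ norm rigidly), we get $E(\phi_1)>E(\phi_2)$. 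By Lemma \ref{lemAcToEn}, $\phi_i\in\mathcal B_{m_i}$, so $E(\phi_i)=e_{m_i}$, which yields $e_{m_1}>e_{m_2}$ as claimed.

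For the second part I argue by contradiction. Suppose $\psi_1,\psi_2\in\mathcal B_m$ are distinct energy ground states at the \emph{same} mass $m$, solving \eqref{eqStationaryNLS} with multipliers $\omega_1\ne\omega_2$, and suppose for contradiction that \emph{both} are action ground states, $\psi_1\in\mathcal A_{\omega_1}$ and $\psi_2\in\mathcal A_{\omega_2}$. The plan is to compare action values across the two frequencies. Using the strict monotonicity established in \eqref{eqSComparison1} of Lemma \ref{prpSOmega}, together with the fact that $\psi_1,\psi_2$ have equal mass $M(\psi_1)=M(\psi_2)=m$, I can compute
\begin{equation*}
    S_{\omega_1}(\psi_2)=S_{\omega_2}(\psi_2)+(\omega_1-\omega_2)m,
    \qquad
    S_{\omega_2}(\psi_1)=S_{\omega_1}(\psi_1)+(\omega_2-\omega_1)m.
\end{equation*}
Assuming without loss of generality $\omega_1<\omega_2$, the minimality of $\psi_1$ for $S_{\omega_1}$ gives $S_{\omega_1}(\psi_1)\le S_{\omega_1}(\psi_2)$, and minimality of $\psi_2$ for $S_{\omega_2}$ gives $S_{\omega_2}(\psi_2)\le S_{\omega_2}(\psi_1)$. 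Substituting the two displayed identities into these inequalities produces $s_{\omega_1}\le s_{\omega_2}+(\omega_1-\omega_2)m$ and $s_{\omega_2}\le s_{\omega_1}+(\omega_2-\omega_1)m$, which combine to force $s_{\omega_2}-s_{\omega_1}=(\omega_2-\omega_1)m$ with equality throughout. Equality in the chain is precisely what \eqref{eqSComparison1} rules out, since it would require one profile to minimize the action at both frequencies and hence solve \eqref{eqStationaryNLS} for two distinct values of $\omega$ — an impossibility for a nontrivial solution.

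The main obstacle I anticipate is handling the borderline frequency and degenerate-mass cases cleanly in the first part: I must rule out $\omega=0$ when $m_1\ne m_2$, and justify that $\mathcal A_\omega$ at a fixed $\omega$ cannot contain profiles of two different masses unless $\omega>0$. This requires invoking the rigidity in the identity $s_\omega=-\frac{p-1}{2(p+1)}\|\phi_\omega\|_{L^{p+1}(\G)}^{p+1}$ from Lemma \ref{prpSOmega}, which ties together all ground states at a given frequency. The second part is comparatively mechanical once the strict inequalities \eqref{eqSComparison1} are invoked with the equal-mass bookkeeping; the delicate point there is confirming that the contradiction genuinely stems from the \emph{strict} inequalities rather than merely the weak ones, so that the conclusion "at least one of $\psi_1,\psi_2$ fails to be an action minimizer" follows.
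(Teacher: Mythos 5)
Your proposal is correct and takes essentially the same route as the paper: part 1 rests on the fact that $\phi_1$ and $\phi_2$ attain the same minimal action $s_\omega$ (you phrase it directly as $E(\phi_1)-E(\phi_2)=\omega\,(M(\phi_2)-M(\phi_1))$, while the paper equivalently extracts $Q_{\operatorname{H}}(\phi_1)>Q_{\operatorname{H}}(\phi_2)$ together with equality of the $L^{p+1}$ norms from \eqref{eqSOmegaPot}), and part 2 derives the same contradiction from the strict comparison \eqref{eqSComparison1} once both $\psi_1\in\mathcal A_{\omega_1}$ and $\psi_2\in\mathcal A_{\omega_2}$ are assumed. The one caveat — shared by the paper's own proof, so not a gap relative to it — is that the strict inequality $e_{m_1}>e_{m_2}$ genuinely requires $\omega>0$ (at $\omega=0$ both arguments yield only $e_{m_1}=e_{m_2}$), and your parenthetical claim that distinct masses force $\omega>0$ is asserted rather than proved.
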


\begin{proof}
    Suppose that $\phi_1,\phi_2 \in \mathcal A_\omega$ are such that $ \| \phi_1 \|_{L^2(\G)}^2 = m_1 < m_2 = \| \phi_2 \|_{L^2(\G)}^2$.  Observe that from \eqref{eqSOmegaPot} we directly obtain that $Q_{\operatorname{H}}(\phi_1) > Q_{\operatorname{H}}(\phi_2)$. Since, by Lemma \ref{lemAcToEn}, $\phi_1 \in \mathcal{B}_{m_1},\phi_2 \in \mathcal{B}_{m_2}$, we get 
    \begin{equation*}
        2e_{m_1} = Q_{\operatorname{H}}(\phi_1) + \frac{2}{p+1}\| \phi_1 \|_{L^{p+1}(\mathcal G)}^{p+1} >  Q_{\operatorname{H}}(\phi_2) + \frac{2}{p+1}\| \phi_2 \|_{L^{p+1}(\mathcal G)}^{p+1}  = 2e_{m_2}.
    \end{equation*}
    
    Now, suppose that for some $m >0$, there exists $\psi_1 \neq \psi_2$, $\psi_1, \psi_2 \in \mathcal B_m$ and the associated Lagrange multipliers $\omega_1 \neq \omega_2$. Without loss of generality, suppose $\omega_1 < \omega_2$. We get 
    \begin{equation*}
        2(S_{\omega_2}(\psi_2) - S_{\omega_1}(\psi_1)) = (\omega_2 - \omega_1) m.
    \end{equation*}
    But if both $\psi_1 \in \mathcal A_{\omega_1}$ and $\psi_2 \in \mathcal A_{\omega_2}$ then by \eqref{eqSComparison1}, we obtain 
    \begin{equation*}
        2(s_{\omega_2} - s_{\omega_1}) < 2 \left( S_{\omega_2}(\phi_{\omega_1}) - s_{\omega_1} \right)  = (\omega_2 - \omega_1) \| \phi_{\omega_1}\|_{L^2(\G)}^2 = (\omega_2 - \omega_1) m
    \end{equation*}
    which is a contradiction. 
\end{proof}

\begin{proof}[Proof of Theorem \ref{thmExisEnGs}]
The proof is given by combining Lemmas \ref{lemAcToEn} and \ref{prpProperties1}.
\end{proof}

\begin{remark} \label{rkMassCont}
We highlight the following observations:
\begin{enumerate}
\item If there exists a unique non-negative profile satisfying \eqref{eqStationaryNLS}, then the functions
\[
    \omega \in [0, -l_{\operatorname{H}}) \mapsto M(\phi_\omega) \text{ if } 1 < p < 5, \quad \omega \in (0, -l_{\operatorname{H}}) \mapsto M(\phi_\omega) \text{ if } p \geq 5,
\]
where  $\phi_\omega \in \mathcal{A}_\omega$, are injective and continuous, and thus admits a continuous inverse. Indeed, for any $\omega$ and any sequence $(\omega_n)_{n \in \mathbb N} \subset (0,-l_{\operatorname{H}})$  such that $\omega_n \to \omega$ as $n \to 0$, the corresponding real-valued action minimizers $\phi_{\omega_n}$ form a minimizing sequence for  $s_\omega$ by \eqref{eqSComparison1}. By compactness, up to a subsequence, they converge strongly in  $H^1(\mathcal{G})$  to a real-valued ground state  $\phi_{\omega} \in \mathcal{A}_\omega$. Uniqueness then ensures that  $\|\phi_{\omega_n}\|_{L^2(\G)} \to \|\phi_{\omega}\|_{L^2(\G)}$, proving continuity and invertibility.
\item In this case, the inverse of Theorem \ref{thmExisEnGs} holds, meaning that every energy ground state is also an action ground state.  
\item Conversely, if the function $m \mapsto \omega_m$, which associates a given mass to its corresponding Lagrange multiplier, is not injective, then by Lemma \ref{prpProperties1}, there exist energy minimizers that are not action minimizers. This contrasts, in general, with the focusing case. Moreover, non-injectivity of $m \mapsto \omega_m$ implies also that the existence of multiple non-negative solutions to \eqref{eqStationaryNLS}, even when accounting for symmetries of the graph.  
\end{enumerate}
\end{remark}

\section{Star graph with Delta and Delta Prime-vertex condition} \label{secStarGraphs}

In this section, we consider the example of the \emph{star graph} $\mathcal G$, i.e. a metric graph with $K \in \mathbb N$ infinite edges $(e_k)_{k = 1, \dots, K}$ identified to $(0, \infty)$ and connected at a common vertex $v$ identified with $0$. Let $\gamma \in \R$ be fixed. We define the Hamiltonian operator $\operatorname{H}_\delta$ with \emph{$\delta$-vertex conditions}, i.e. defined on the domain
\[
    D( \operatorname{H}_\delta ) = \left\{ \psi \in H^2( \mathcal G ): \psi_{e_1} (0) = \ldots = \psi_{e_K} (0) \text{ and } \sum_{k = 1}^K \psi_{e_k}' (0) = \gamma \psi (0) \right\},
\]
where $\psi(0)$ denotes the common value of $\psi$ at the vertex $v$. Observe that, for $\psi \in D( \operatorname{H}_\delta )$, the $\delta$-vertex condition can be written as
\begin{equation*} 
    A \Psi(0) + B \Psi'(0) = 0
\end{equation*}
with $A, B$ are the $(K \times K)$-matrices given by
\begin{equation} \label{eqDeltaMatrix}
    A =
    \begin{pmatrix}
        1 & -1 & 0 & \cdots & \cdots & 0 \\
        0 & 1 & -1 & \ddots & & \vdots \\
        \vdots & \ddots & \ddots & \ddots & \ddots & \vdots \\
        \vdots & & \ddots & \ddots & \ddots & 0 \\
        0 & & & \ddots & 1 & -1 \\
        -\gamma & 0 & \cdots & \cdots & 0 & 0
    \end{pmatrix}, \quad
    B =
    \begin{pmatrix}
        0 & 0 & \cdots & 0 \\
        \vdots & \vdots & & \vdots \\
        0 & 0 & \cdots & 0 \\
        1 & 1 & \cdots & 1
    \end{pmatrix}.
\end{equation}
This operator is self-adjoint (see \cite[Chapter $1$]{BeKu13}). Its quadratic form $Q_{\operatorname{H}_\delta}$ is given by
\[
    Q_{\operatorname{H}_\delta} (\psi) = \| \psi' \|_{L^2( \mathcal G )}^2 + \gamma |\psi (0)|^2
\]
and is defined on the energy domain
\[
    D( Q_{\operatorname{H}_\delta} ) = \left\{ \psi \in H^1( \mathcal G ) : \psi_{e_1} (0) = \ldots = \psi_{e_K} (0) \right\}.
\]
We also define the Hamiltonian operator $\operatorname{H}_{\delta'}$ with \emph{$\delta'$-vertex conditions} defined on the domain
\[
    D(\operatorname{H}_{\delta'}) = \left\{\psi \in H^2(\mathcal G): \psi_{e_1}'(0) = \ldots = \psi_{e_K}'(0) \text{ and } \sum_{k = 1}^K \psi_{e_k}(0) = \gamma \psi'(0) \right\},
\]
where $\psi'(0)$ denotes the common value of $\psi'$ at the vertex. The $\delta'$-vertex condition can be written in the form
\[
    B\Psi(0) + A\Psi'(0) = 0
\]
where $A$ and $B$ are given in \eqref{eqDeltaMatrix}. Its quadratic form $Q(\operatorname{H}_{\delta'})$ is given by
\[
    Q_{\operatorname{H}_{\delta'}}(\psi) = \| \psi' \|_{L^2( \mathcal G )}^2 + \frac{1}{\gamma} \sum_{k = 1}^K \left| \psi_{e_k} (0) \right|^2
\]
on the domain
\[
    D(Q_{\operatorname{H}_{\delta'}}) = H^1(\mathcal G).
\]
The operators $\operatorname{H}_\delta$ and $\operatorname{H}_{\delta'}$ are self-adjoint and their essential spectrum is given by $\sigma_{ess}(\operatorname{H}_\delta) = \sigma_{ess}(\operatorname{H}_{\delta'}) = [0, \infty)$ by Lemma \ref{prpEssSpec}. We proceed to compute explicitly the bottom of the spectrum of those operators in the following lemma. 
\begin{lem} \label{lemDiscSpecStarGraph}
    If $\gamma < 0$, the discrete spectrum of $\operatorname{H}_\delta$ and $\operatorname{H}_{\delta'}$ are given by
    \begin{equation}
        \label{eqSpecBott}
        \sigma_{dis} (\operatorname{H}_\delta) = \left\{ -\frac{\gamma^2}{K^2} \right\}, \quad \sigma_{dis} (\operatorname{H}_{\delta'}) = \left\{ -\frac{K^2}{\gamma^2} \right\}.
        \end{equation}
    In particular, the bottom of the spectrum is given by $l_{\operatorname{H}_\delta} = -\gamma^2 / K^2$ and $l_{\operatorname{H}_{\delta'}} = -K^2 / \gamma^2$. 
    
    If $\gamma \geq 0$, then $\sigma_{dis}(\H_\delta) = \sigma_{dis}(\H_{\delta'})=  \emptyset$.
\end{lem}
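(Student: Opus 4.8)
The plan is to compute the discrete spectrum of $\operatorname{H}_\delta$ and $\operatorname{H}_{\delta'}$ directly by solving the eigenvalue equation $\operatorname{H}\psi = -\kappa^2\psi$ for $\kappa > 0$, using the fact that the essential spectrum is $[0,\infty)$, so any eigenvalue must be negative, of the form $-\kappa^2$ with $\kappa > 0$. On each half-line $e_k \cong (0,\infty)$, the only $L^2$ solutions to $-\psi_{e_k}'' = -\kappa^2 \psi_{e_k}$ are $\psi_{e_k}(x) = c_k e^{-\kappa x}$ for constants $c_k \in \mathbb{C}$. The strategy is then to impose the vertex conditions and determine for which $\kappa$ a nontrivial solution exists.

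For the $\delta$ case, the continuity condition $\psi_{e_1}(0) = \cdots = \psi_{e_K}(0)$ forces all $c_k$ equal, say $c_k = c$. The derivative at the vertex is $\psi_{e_k}'(0) = -\kappa c$, so $\sum_{k=1}^K \psi_{e_k}'(0) = -K\kappa c$, while $\gamma\psi(0) = \gamma c$. The condition $\sum_k \psi_{e_k}'(0) = \gamma\psi(0)$ yields $-K\kappa c = \gamma c$, hence for $c \neq 0$ we need $\kappa = -\gamma/K$. This is admissible (i.e. $\kappa > 0$) precisely when $\gamma < 0$, giving the single eigenvalue $-\kappa^2 = -\gamma^2/K^2$. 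For the $\delta'$ case I would argue symmetrically: the condition $\psi_{e_1}'(0) = \cdots = \psi_{e_K}'(0)$ forces $-\kappa c_k$ all equal, hence all $c_k$ equal to some $c$; then $\sum_k \psi_{e_k}(0) = Kc$ and $\gamma\psi'(0) = \gamma(-\kappa c)$, so $Kc = -\gamma\kappa c$ gives $\kappa = -K/\gamma$, admissible for $\gamma < 0$ and producing the eigenvalue $-K^2/\gamma^2$.

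To complete the $\gamma \ge 0$ claim, I would note that the same computations force $\kappa \le 0$, which is incompatible with $\kappa > 0$, so no negative eigenvalue exists and $\sigma_{dis} = \emptyset$. I would also quickly confirm that these are the only candidate eigenfunctions: on an infinite edge the general solution of $-\psi'' = -\kappa^2\psi$ is a combination of $e^{\pm\kappa x}$, and square-integrability on $(0,\infty)$ rules out the growing exponential, so the decaying profile above is forced; since the vertex conditions then pin down $\kappa$ uniquely, the eigenvalue is simple and the discrete spectrum is exactly the singleton displayed in \eqref{eqSpecBott}.

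\emph{The main subtlety} to handle carefully is confirming that the discrete spectrum contains \emph{no other} points — i.e. that an eigenfunction cannot put different decay rates or mixed combinations on different edges. This is resolved by observing that all edges share the same spectral parameter $\kappa$ (the eigenvalue is a single scalar), so the decay rate is identical on every edge, and the vertex linear system is what selects the unique admissible $\kappa$. Once the eigenfunction structure is fixed by $L^2$-decay, the computation is an elementary linear algebra check at the single vertex, so I do not expect a serious obstacle beyond bookkeeping; the self-adjointness and the identification $\sigma_{ess} = [0,\infty)$ from Proposition \ref{prpEssSpec} guarantee that any spectrum below $0$ is discrete, closing the argument.
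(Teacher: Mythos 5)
Your proof is correct and takes essentially the same approach as the paper: both use Proposition \ref{prpEssSpec} and self-adjointness to restrict the search to negative eigenvalues, reduce to $L^2$-decaying exponentials $c_k e^{-\kappa x}$ on each edge, and solve the vertex-condition linear system at the single vertex to pin down the unique admissible $\kappa$ (hence the simple eigenvalue). Your explicit computation for the $\delta'$ case and your remark on $\gamma \ge 0$ merely spell out steps the paper compresses into ``follows the same way.''
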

\begin{proof}
    We start by computing the discrete spectrum of $\operatorname{H}_\delta$. By self-adjointness of $\operatorname{H}_\delta$ and Lemma \ref{prpEssSpec}, we only have to consider the interval $(-\infty, 0)$ to seek eigenvalues. Let $\lambda \in (-\infty, 0)$. A solution $\phi \in H^2( \mathcal G )$ of the equation 
    \begin{equation} \label{eqLinEigODE}
        - \partial_{xx} \phi_{e_k} - \lambda \phi_{e_k} = 0
    \end{equation}
    for all $k = 1, \ldots, K$ is of the form
    \[
        \phi_{e_k} (x) =e^{- \sqrt{| \lambda |}x}, \quad x \in ( 0, \infty )
    \]
    for some $C_k \in \mathbb R$. However, such a $\phi$ satisfies the $\delta$-vertex conditions if and only if $\lambda = -\gamma^2 / K^2$ and $C_1 = \ldots = C_K$ for all $k = 1, \ldots, K$. Thus, the only negative eigenvalue of $\operatorname{H}_\delta$ is given by
    \[
        \lambda = - \frac{ \gamma^2 }{ K^2 }
    \]
    and is associated with the eigenvector
    \[
        \phi_{e_k} (x) = e^{\frac{ \gamma }{ N }x}, \quad x \in (0, \infty).
    \]
    Therefore, $-\gamma^2 / K^2$ is an isolated eigenvalue of multiplicity 1 of $\operatorname{H}_\delta$ and we have
    \[
        \sigma_{dis} ( \operatorname{H}_\delta ) = \left\{ - \frac{ \gamma^2 }{ K^2 } \right\}.
    \]
    The computation of the discrete spectrum of $\operatorname{H}_{\delta'}$ follows the same way, with the exception of the eigenvalue being $\lambda = -K^2 / \gamma^2$. This concludes the proof.
\end{proof}

Notice that by Theorem \ref{thmCriterion}, for $\gamma \geq 0$, a ground state does not exist as $\operatorname{H}_\delta$ and $\operatorname{H}_{\delta'}$ are positive operators. Thus, from now until the end of this section, we will consider only the case $\gamma < 0$. 

We now compute the stationary state of \eqref{eqStationaryNLS} for $\operatorname{H}_{\delta}$ and $\operatorname{H}_{\delta'}$.

\begin{proposition} \label{propStarGraphGS}
    %Let $p > 1$, $\omega \in (0, -l_{\operatorname{H}_i})$, $\gamma < 0$ and
    Let $i \in \{ \delta, \delta'\}$ be fixed. The unique non-trivial solution to the equation
    \begin{equation} \label{eqStationaryStarGraph}
        \operatorname{H}_i \phi + \omega \phi + |\phi|^{p-1} \phi = 0
    \end{equation}
    up to a phase shift is given by $\phi_\omega = \phi_\omega(i) \in D( \operatorname{H}_i )$ such that:
    \begin{enumerate}
        \item for $\omega \neq 0$ and $p > 1$,
        \begin{equation} \label{eqStarGraphGS1}
            {\phi_\omega}_{e_k} (x) = \left( \frac{(p+1)\omega}{2} \right)^\frac{1}{p-1} \sinh{ \left( \frac{(p-1)\sqrt{\omega}}{2} x + b_\omega \right) }^{-\frac{2}{p-1}}
        \end{equation}
        with $x \in (0, \infty)$, $k = 1, \ldots, K$ and
        \[
            b_\omega = b_\omega(i) = \operatorname{arctanh} \left( \sqrt{-\frac{\omega}{l_{\operatorname{H}_i}} } \right);
        \]
        \item for $\omega = 0$ and $1 < p < 5$,
        \begin{equation} \label{eqStarGraphGS2}
            {\phi_0}_{e_k} (x) = \left( \frac{1-p}{\sqrt{2(p+1)}} x + b_0 \right)^{-\frac{2}{p-1}}
        \end{equation}
        with $x \in (0, \infty)$, $k = 1, \ldots, K$ and
        \[
            b_0 = b_0(p, i) = \left( - \frac{2}{l_{\operatorname{H}_i} (1 + p)} \right)^\frac{p-1}{2(p+1)}.
        \]
    \end{enumerate}
    For $\omega = 0$ and $p \geq 5$, there is no non-trivial $H^1(\G)$ solution to \eqref{eqStationaryStarGraph}.
\end{proposition}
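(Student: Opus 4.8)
The plan is to reduce \eqref{eqStationaryStarGraph} to a family of scalar ODEs on the half-line, integrate each one explicitly, and then reconnect them through the vertex conditions. First I would use the gauge invariance $\phi \mapsto e^{\ii\theta}\phi$ of \eqref{eqStationaryStarGraph} to restrict attention to real-valued, one-signed profiles, and observe that on each edge $e_k \cong (0,\infty)$ the equation becomes the autonomous ODE
\[
    -\phi_{e_k}'' + \omega\,\phi_{e_k} + |\phi_{e_k}|^{p-1}\phi_{e_k} = 0,
\]
the edges being coupled only through the conditions defining $D(\operatorname{H}_i)$. Since any $\phi \in D(\operatorname{H}_i) \subset H^2(\G)$ restricts to an $H^2(0,\infty)$ function on each edge, only the solutions decaying at $+\infty$ are admissible; note also that the $\operatorname{arctanh}$ in \eqref{eqStarGraphGS1} forces the relevant regime to be $\omega \in (0,-l_{\operatorname{H}_i})$, consistent with Theorem \ref{thmCriterion}.

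To integrate the ODE I would multiply by $\phi_{e_k}'$ and integrate, obtaining the first integral
\[
    \tfrac12 (\phi_{e_k}')^2 = \tfrac{\omega}{2}\phi_{e_k}^2 + \tfrac{1}{p+1}\phi_{e_k}^{p+1},
\]
where the integration constant is forced to vanish by the decay $\phi_{e_k},\phi_{e_k}'\to 0$. Separating variables in $\phi_{e_k}' = -\sqrt{\omega\,\phi_{e_k}^2 + \tfrac{2}{p+1}\phi_{e_k}^{p+1}}$ then produces, for $\omega>0$, the profile $\sinh(\,\cdot\,)^{-2/(p-1)}$ of \eqref{eqStarGraphGS1}, and, for $\omega=0$, the algebraic profile of \eqref{eqStarGraphGS2}; in each case a single free translation parameter $b$ survives, with $b\mapsto\phi(0;b)$ strictly monotone.

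Next I would impose the coupling through the vertex conditions. For $\operatorname{H}_\delta$, continuity at $v$ forces the boundary values $\phi_{e_k}(0)$ to agree, and injectivity of $b\mapsto\phi(0;b)$ forces the same $b$ — and, restoring phases, the same phase — on every edge; the remaining scalar condition $\sum_k \phi_{e_k}'(0)=\gamma\phi(0)$ reduces, via the first integral identity $\phi'(0)/\phi(0)=-\sqrt{\omega}\coth b$, to $\sqrt{\omega}\coth b = -\gamma/K$. Solving gives $\tanh b = \sqrt{-\omega/l_{\operatorname{H}_\delta}}$, i.e.\ exactly $b_\omega = \operatorname{arctanh}\sqrt{-\omega/l_{\operatorname{H}_\delta}}$, and shows solvability precisely when $\coth b>1$, that is $\omega\in(0,-l_{\operatorname{H}_\delta})$, in agreement with Lemma \ref{lemDiscSpecStarGraph}. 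The $\delta'$ case is identical after using derivative-continuity to trigger the symmetry reduction, the scalar condition $\sum_k \phi_{e_k}(0)=\gamma\phi'(0)$ now giving $\sqrt{\omega}\coth b = -K/\gamma$ and hence $b_\omega = \operatorname{arctanh}\sqrt{-\omega/l_{\operatorname{H}_{\delta'}}}$; the analogous computation at $\omega=0$ (with $\phi'(0)/\phi(0)=-\sqrt{2/(p+1)}\,b_0^{-1}$) yields $b_0$. Uniqueness up to a phase shift follows as in \cite{KaOh09, PaGo18} from the injectivity of the profile-to-data map.

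Finally, for $\omega=0$ and $p\geq 5$, I would note that the algebraic profile \eqref{eqStarGraphGS2} decays only like $x^{-2/(p-1)}$, so that $\int^\infty |\phi_{e_k}|^2 \sim \int^\infty x^{-4/(p-1)}\,dx$ diverges exactly when $4/(p-1)\leq 1$, i.e.\ $p\geq 5$; thus the unique admissible profile is not in $L^2$, and since every edge of the star graph is infinite, no nontrivial $H^1(\G)$ solution can exist. I expect the main obstacle to be the symmetry and uniqueness step: rigorously excluding asymmetric configurations (distinct translation parameters or phases on different edges) and spurious non-decaying branches of the ODE. This requires the phase-plane classification of decaying solutions together with the injectivity of $b\mapsto(\phi(0;b),\phi'(0;b))$, which is precisely the technical content borrowed from \cite{KaOh09, PaGo18}; the explicit integration and the vertex-condition bookkeeping are otherwise routine.
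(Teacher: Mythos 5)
Your proposal is correct and follows essentially the same route as the paper's proof: edge-wise explicit solution of the stationary ODE (which you rederive self-containedly via the first integral and quadrature, and which for the $\delta$ case the paper simply imports from \cite{KaOh09, PaGo18}), symmetry reduction through the vertex conditions leading to the same transcendental equation $\tanh b_\omega = \sqrt{-\omega/l_{\operatorname{H}_i}}$, and the failure of $L^2$-decay of the algebraic profile \eqref{eqStarGraphGS2} precisely when $p \geq 5$. Both you and the paper delegate the same genuinely technical step---the classification of complex decaying solutions as a constant phase times the real profile, which underlies the symmetry reduction---to \cite{KaOh09, PaGo18}, so there is no gap.
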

\begin{proof}
    Let $p > 1$, $\omega \in (0, -l_{\operatorname{H}_i})$ and $\gamma < 0$ be fixed. For $i = \delta$, it follows from \cite[Theorem $1$]{KaOh09}, \cite[Theorem $4.1.$]{PaGo18} that the unique solution to \eqref{eqStationaryStarGraph} is given by \eqref{eqStarGraphGS1} up to a phase shift. We then proceed to compute the stationary state for $i = \delta'$. The equation \eqref{eqStationaryStarGraph} can be written on every edge as
    \[
        -\phi_{e_k}'' + \omega \phi_{e_k} + |\phi_{e_k}|^{p-1} \phi_{e_k} = 0,
    \]
    so that, for $k = 1, \ldots, K$, we have
    \[
        \phi_{e_k}(x) = \sigma_k \left( \frac{(p+1)\omega}{2} \right)^{1/(p-1)} \sinh{ \left( \frac{(p-1)\sqrt{\omega}}{2} x + b_k \right) }^{-2/(p-1)}
    \]
    where $|\sigma_k| = 1$ and $a_k > 0$ is a translation parameter. By continuity of $\phi'$ at the vertex, we immediately have $\sigma_1 = \ldots = \sigma_K$ and
    \[
        \cosh(b_1) \sinh(b_1)^{-\frac{2}{p-1} - 1} = \ldots = \cosh(b_K) \sinh(b_K)^{-\frac{2}{p-1} - 1},
    \]
    so by monotonicity, it follows that $b_1 = \ldots = b_K = b$. Furthermore, the jump of the function imposes that
    \[
        K \sinh(b)^{-\frac{2}{p-1}} = -\gamma \sqrt{\omega} \cosh(b) \sinh(b)^{-\frac{2}{p-1} - 1},
    \]
    so by \eqref{eqSpecBott} we get
    \begin{equation}\label{eqBOmega}
         b = b_\omega(i) = \operatorname{arctanh} \left( \sqrt{-\frac{\omega}{l_{\operatorname{H}_i}}} \right).
    \end{equation}
    We now turn to the case $\omega = 0$. By direct computation, one can verify that a solution of the form \eqref{eqStarGraphGS2} solves \eqref{eqStationaryNLS} on the half-line $\R^+$. Moreover, by classical theory, this solution is the unique positive real-valued one. The shift parameter $b_0$ can then be determined by the same procedure as above. Finally, it is straightforward to check that the resulting profile belongs to $H^1(\R^+)$ for $p \in (1,5)$, whereas for $p > 5$ it fails to be in $L^2(\R^+)$. This completes the proof.
\end{proof}
Proposition \ref{propStarGraphGS} is illustrated in Figure \ref{figStarGraphGS}. We used the Python library Grafidi presented in \cite{BeDuLeC22} to represent the ground state on a star graph. 

%\FloatBarrier
\begin{figure}[H] 
    \centering
    \includegraphics[height=0.21\textheight]{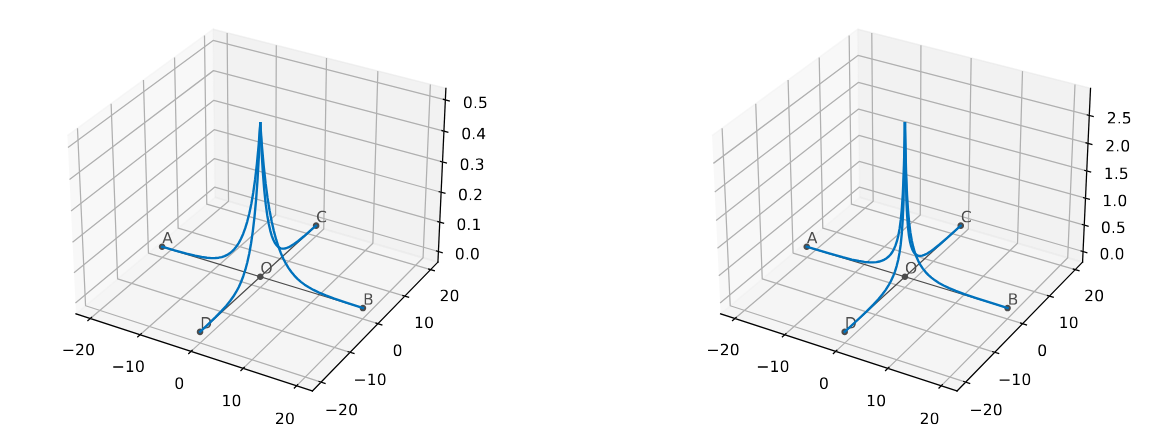}
    % \begin{minipage}{0.4\textwidth}
    %     \centering
    %     \includegraphics[height=0.25\textheight]{sol1d}
    % \end{minipage}
    % \begin{minipage}{0.4\textwidth}
    %     \centering
    %     \includegraphics[height=0.25\textheight]{sol1dp}
    % \end{minipage}
    \caption{Action ground state for $\delta$-type condition (left) and $\delta'$-type condition (right) for $p=3$, $K = 4$, $\gamma = -2$, $\omega = 1/8$.}
   \label{figStarGraphGS}
\end{figure}

We immediately deduce the following corollary from Lemma \ref{lemDiscSpecStarGraph} and Theorem \ref{thmCriterion}.

\begin{corollary}
    The following statements hold:
    \begin{enumerate}
        \item Let $p > 1$, $\omega \in (0, -l_{\operatorname{H}_i})$, $\gamma < 0$ and $i \in \{ \delta, \delta'\}$ be fixed. The set of action ground states on the star graph $\mathcal G$ is given by $\mathcal A_\omega = \{ e^{i \theta} \phi_\omega : \theta \in [0, 2 \pi) \}$, where $\phi_\omega$ is given by \eqref{eqStarGraphGS1}, and is stable.
        \item Let $1 < p < 5$, $\omega = 0$, $\gamma < 0$ and $i \in \{ \delta, \delta'\}$ be fixed. The set of action ground states on the star graph $\mathcal G$ is given by $\mathcal A_0 = \{ e^{i \theta} \phi_0: \theta \in [0, 2 \pi) \}$, where $\phi_0$ is given by \eqref{eqStarGraphGS2}, and is stable.
    \end{enumerate}
    For $p \geq 5$ and $\omega = 0$, there are no action ground states.
\end{corollary}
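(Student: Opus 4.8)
The plan is to assemble the corollary directly from the general machinery already developed, handling the three regimes of the statement separately. The unifying observation specific to star graphs is that \emph{every} edge is infinite, so the compact core is empty; this single fact makes the case $p \geq 5$ immediate and, in the remaining cases, lets the uniqueness statement of Proposition \ref{propStarGraphGS} upgrade ``existence of a ground state'' to a complete description of the ground-state set.

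For part (1), I would first apply Lemma \ref{lemDiscSpecStarGraph}: since $\gamma < 0$, we have $l_{\H_i} < 0$ for both $i \in \{\delta, \delta'\}$. With $\omega \neq 0$ and $\omega \in (0, -l_{\H_i})$, Theorem \ref{thmCriterion} guarantees that an action ground state $\psi^\ast \in \mathcal{A}_\omega$ exists. As an unconstrained minimizer of $S_\omega$, it is a critical point and therefore solves \eqref{eqStationaryStarGraph}. Proposition \ref{propStarGraphGS} then forces $\psi^\ast = e^{i\theta_0}\phi_\omega$ for some $\theta_0$, with $\phi_\omega$ given by \eqref{eqStarGraphGS1}; hence $\mathcal{A}_\omega \subseteq \{ e^{i\theta}\phi_\omega : \theta \in [0,2\pi)\}$. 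For the reverse inclusion I would use that $S_\omega$ is phase-invariant: each of its terms — namely $\|\psi'\|_{L^2(\G)}^2$, the vertex contribution $\gamma|\psi(0)|^2$ for $\delta$ or $\gamma^{-1}\sum_k|\psi_{e_k}(0)|^2$ for $\delta'$, the mass, and the $L^{p+1}$ norm — depends only on moduli, so $S_\omega(e^{i\theta}\phi_\omega) = S_\omega(\psi^\ast) = s_\omega$ for every $\theta$. Thus the whole orbit lies in $\mathcal{A}_\omega$, giving equality. Stability is then exactly Lemma \ref{lemStabGSAc} (equivalently Corollary \ref{corStability}).

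Part (2) is structurally identical, now with $\omega = 0$ and $1 < p < 5$: Theorem \ref{thmCriterion} yields existence from $l_{\H_i} < 0$, Proposition \ref{propStarGraphGS} supplies the unique profile \eqref{eqStarGraphGS2}, and phase invariance together with stability closes the argument. For the final claim ($p \geq 5$, $\omega = 0$), I would argue by contradiction: a ground state would be a \emph{non-trivial} solution of \eqref{eqStationaryStarGraph}, and by Theorem \ref{thmCritCase} any such solution must vanish identically on every infinite edge. Since all edges of the star graph are infinite, this forces the solution to be identically zero, a contradiction (this is also the last assertion of Proposition \ref{propStarGraphGS}). To be complete, one should rule out the trivial function as a minimizer: as $l_{\H_i} < 0$, Lemma \ref{lem4} gives $s_0 < 0 = S_0(0)$, so $0 \notin \mathcal{A}_0$, and hence $\mathcal{A}_0 = \emptyset$.

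The step requiring the most care is conceptual rather than computational: the passage from ``$\mathcal{A}_\omega$ is contained in the orbit of the unique solution'' to ``$\mathcal{A}_\omega$ equals that orbit.'' This hinges precisely on two facts that must be cited in the right order — that existence of a minimizer has already been secured by Theorem \ref{thmCriterion}, and that $S_\omega$ is invariant under multiplication by a phase — since only together do they ensure $\phi_\omega$ itself (and its entire orbit) realizes the infimum $s_\omega$, rather than merely solving the Euler–Lagrange equation. Everything else is a direct application of the earlier results.
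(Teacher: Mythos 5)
Your proposal is correct and follows essentially the same route as the paper, which deduces this corollary directly from Lemma \ref{lemDiscSpecStarGraph}, Theorem \ref{thmCriterion}, Proposition \ref{propStarGraphGS} (for the identification of $\mathcal{A}_\omega$ with the phase orbit and for the $p \geq 5$, $\omega = 0$ non-existence), and the stability statement of Corollary \ref{corStability}. The only detail worth adding is that in parts (1) and (2), before invoking the uniqueness of \emph{non-trivial} solutions in Proposition \ref{propStarGraphGS}, one should observe (as you do in part (3)) that $s_\omega < 0$ by Lemma \ref{lem4}, so the minimizer produced by Theorem \ref{thmCriterion} cannot be the zero function.
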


Since the solution is unique up to phase shifts, we will henceforth refer to $\phi_\omega$, given by \eqref{eqStarGraphGS1}–\eqref{eqStarGraphGS2}, as the action ground state of $S_\omega$. We now investigate the relation between the frequency parameter $\omega$ and the mass of the corresponding action ground state for different values of $p$. The next proposition shows a sharp dichotomy: for $1 < p < 5$, the mass of the action ground state ranges within a finite interval, while for $p \geq 5$, it spans the entire positive half-line $\mathbb{R}^+$.

\begin{proposition}\label{prpSec4Main}
    Let $\gamma < 0$ and $i \in \{ \delta, \delta'\}$ be fixed. The following properties hold:
    \begin{enumerate}
        \item For $1 < p < 5$, there exists $\mu = \mu(p) < \infty$ such that the function $\omega \in [0, -l_{\operatorname{H}_i}) \mapsto M(\phi_\omega)$ takes values in $(0, \mu)$ and is strictly decreasing.
        \item For $p \geq 5$, the function $\omega \in (0, -l_{\operatorname{H}_i}) \mapsto M(\phi_\omega)$ takes values in $(0, \infty)$ and is strictly decreasing.
    \end{enumerate}
\end{proposition}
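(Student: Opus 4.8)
The plan is to work directly from the explicit profiles of Proposition \ref{propStarGraphGS}. Since all $K$ edges carry the same profile, for $\omega \in (0,-l_{\H_i})$ one has $M(\phi_\omega) = \tfrac{K}{2}\int_0^\infty |{\phi_\omega}_{e_1}(x)|^2\,dx$, and substituting $u = \tfrac{(p-1)\sqrt\omega}{2}x + b_\omega$ in \eqref{eqStarGraphGS1} collapses the edge integral into
\begin{equation*}
    M(\phi_\omega) = \frac{K}{p-1}\left(\frac{p+1}{2}\right)^{\frac{2}{p-1}} \omega^{\frac{5-p}{2(p-1)}} \int_{b_\omega}^\infty \sinh(u)^{-\frac{4}{p-1}}\,du .
\end{equation*}
The two features separating the subcritical and supercritical regimes are exactly the sign of the exponent $\tfrac{5-p}{2(p-1)}$ and the convergence of $\int_0^\infty \sinh(u)^{-4/(p-1)}\,du$ at the origin, the latter holding if and only if $\tfrac{4}{p-1}<1$, i.e. $p>5$. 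I would also record that $b_\omega=\operatorname{arctanh}\sqrt{-\omega/l_{\H_i}}$ makes $\omega\mapsto b_\omega$ a strictly increasing bijection of $(0,-l_{\H_i})$ onto $(0,\infty)$, with $b_\omega\sim\sqrt{\omega/|l_{\H_i}|}\to 0^+$ as $\omega\to 0^+$ and $b_\omega\to\infty$ as $\omega\to -l_{\H_i}$; this reduces every boundary limit to a one-sided limit of $I(b):=\int_b^\infty \sinh(u)^{-4/(p-1)}\,du$.

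For strict monotonicity I would avoid differentiating the displayed formula and instead reuse the comparison chain \eqref{eqSComparison1}. For $\omega_1<\omega_2$ in $(0,-l_{\H_i})$, writing $S_{\omega_1}(\psi)=S_{\omega_2}(\psi)+(\omega_1-\omega_2)M(\psi)$ and inserting this into $S_{\omega_1}(\phi_{\omega_1})<S_{\omega_1}(\phi_{\omega_2})$ and into $S_{\omega_2}(\phi_{\omega_2})<S_{\omega_2}(\phi_{\omega_1})$ yields
\begin{equation*}
    M(\phi_{\omega_2}) < \frac{s_{\omega_2}-s_{\omega_1}}{\omega_2-\omega_1} < M(\phi_{\omega_1}),
\end{equation*}
so $M(\phi_{\omega_2})<M(\phi_{\omega_1})$. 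This is simply the discrete form of the fact that $s_\omega$ is concave with $s_\omega'=M(\phi_\omega)$. For $p\in(1,5)$, strictness at the left endpoint $\omega=0$ then follows by combining this with the continuity of $\omega\mapsto M(\phi_\omega)$ from Remark \ref{rkMassCont}, which is available because the profile is unique up to phase.

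It remains to identify the images, and here the explicit asymptotics of $I(b)$ are needed. As $\omega\to -l_{\H_i}$ (a finite positive number) the prefactor $\omega^{(5-p)/(2(p-1))}$ stays bounded while $b_\omega\to\infty$, so $I(b_\omega)\to 0$ by the exponential tail and hence $M(\phi_\omega)\to 0$ in every regime. For $p\in(1,5)$ the endpoint $\omega=0$ lies in the domain and $\phi_0\in H^1(\G)$ by Proposition \ref{propStarGraphGS}, so $\mu:=M(\phi_0)<\infty$ is the supremum (attained at $\omega=0$); together with strict monotonicity and continuity this gives image $(0,\mu]$, contained in the finite interval claimed. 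For $p\ge 5$ the profile leaves $H^1$ as $\omega\to 0^+$, and I expect this to be the main obstacle: one must evaluate the indeterminate product $\omega^{(5-p)/(2(p-1))}\,I(b_\omega)$ as $b_\omega\to 0^+$. For $p>5$ the exponent is negative and $I(0)<\infty$, giving $M\to\infty$ immediately; for $p=5$ one computes $I(b)=-\ln\tanh(b/2)\to\infty$; and for the delicate subcritical check the expansion $\sinh(u)\sim u$ yields $I(b)\sim \tfrac{p-1}{5-p}\,b^{-(5-p)/(p-1)}$ as $b\to0$, so that $\omega^{(5-p)/(2(p-1))}I(b_\omega)$ tends to a finite constant (matching $M(\phi_0)$) precisely when $p<5$ and diverges when $p\ge5$. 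Continuity plus strict monotonicity then upgrade these boundary values to the full images, $(0,\mu)$ for $p\in(1,5)$ and $(0,\infty)$ for $p\ge5$, completing the proof.
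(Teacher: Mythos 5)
Your proof is correct, and its computational core coincides with the paper's: the same change of variables yields
\begin{equation*}
M(\phi_\omega)=\frac{K}{p-1}\left(\frac{p+1}{2}\right)^{\frac{2}{p-1}}\omega^{\frac{5-p}{2(p-1)}}\int_{b_\omega}^\infty \sinh(u)^{-\frac{4}{p-1}}\,du,
\end{equation*}
the limit $M(\phi_\omega)\to 0$ as $\omega\to -l_{\operatorname{H}_i}$ follows from $b_\omega\to\infty$, and the trichotomy in $p$ is read off from the sign of the exponent and from the behaviour of the integral as $b_\omega\to 0^+$ (incidentally, your prefactor $K/(p-1)$ is the correct one; the paper's display drops this harmless constant). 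Where you genuinely depart from the paper is in the monotonicity argument and in the supercritical limit. The paper deduces strict monotonicity from injectivity plus continuity of $\omega\mapsto M(\phi_\omega)$ (Remark~\ref{rkMassCont}), and that injectivity ultimately rests on uniqueness of the profile together with the action/energy machinery of Lemma~\ref{prpProperties1}; your two-sided bound
\begin{equation*}
M(\phi_{\omega_2})<\frac{s_{\omega_2}-s_{\omega_1}}{\omega_2-\omega_1}<M(\phi_{\omega_1}),
\end{equation*}
extracted directly from \eqref{eqSComparison1}, is self-contained and quantitative (it is the discrete form of the concavity of $s_\omega$ with $s_\omega'=M(\phi_\omega)$), and it would apply on any graph where action ground states exist at both frequencies, with no appeal to uniqueness. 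On the other hand, you still need continuity (from Remark~\ref{rkMassCont} or, equivalently, from the explicit formula) to pass from strict monotonicity on the open interval to the identification of the full images and to the strict inequality at the endpoint $\omega=0$ when $p\in(1,5)$, so the paper's Remark~\ref{rkMassCont} is not bypassed entirely. For $p>5$, your observation that $\int_0^\infty\sinh(u)^{-4/(p-1)}\,du<\infty$ precisely when $4/(p-1)<1$, so that $I(b_\omega)\to I(0)\in(0,\infty)$ while the prefactor $\omega^{(5-p)/(2(p-1))}$ diverges, is simpler than the paper's splitting of the integral at $b_\omega+1$ combined with a Taylor expansion of $\sinh$; both yield $M(\phi_\omega)\to\infty$ as $\omega\to 0^+$, and your $p=5$ primitive $I(b)=-\ln\tanh(b/2)$ agrees with the paper's $\ln\bigl((e^{b}+1)/(e^{b}-1)\bigr)$. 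Finally, your point that for $1<p<5$ the image is $(0,M(\phi_0)]$ with the supremum attained at $\omega=0$ is accurate and consistent with the statement, which only asserts containment in some finite interval $(0,\mu)$.
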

\begin{proof}
    Let be $i \in \{ \delta, \delta'\}$ be fixed. We first consider $p \in (1, \infty) \setminus \{ 5 \}$. Applying the change of variable $y = (p-1)\sqrt{\omega} x / 2 + b_\omega$, we have
    \begin{equation*} \label{eqMassGS}
        M(\phi_\omega) = K \left( \frac{p+1}{2} \right)^{\frac{2}{p-1}} \omega^{\frac{5-p}{2(p-1)}} \int_{b_\omega}^\infty \sinh (y)^{-\frac{4}{p-1}} dy,
    \end{equation*}
    with
    \begin{equation*}
        b_\omega \to
        \begin{cases}
            0 \text{ as } \omega \to 0, \\
            \infty \text{ as } \omega \to -l_{\operatorname{H}_i}.
        \end{cases}
    \end{equation*}
    We immediately have
    \begin{equation} \label{eqMassLim}
        M(\phi_\omega) \to 0 \text{ as } \omega \to -l_{\operatorname{H}_i}
    \end{equation}
    by the dominated convergence theorem.
    
    We consider now the case where $\omega$ is asymptotically close to $0$. For $1 < p < 5$, we have that
    \begin{equation} \label{eqMassOmegaNull}
        M(\phi_0) = \frac{K\sqrt{2(1+p)}}{p-1} b_0^\frac{p-5}{p-1}.
    \end{equation}
    As $\omega \in [0,-l_{\operatorname{H}}) \mapsto M(\phi_\omega)$ is injective and continuous by Remark \ref{rkMassCont}, we obtain that it is strictly decreasing and taking values in $(0, M(\phi_0))$ by \eqref{eqMassLim}-\eqref{eqMassOmegaNull}.
    %\textcolor{red}{the order of convergence of $M(\phi_\omega)$ as $\omega \to 0$ is given by
    %\[
    %    \omega^{\frac{5-p}{2(p-1)}} b_\omega^\frac{p-5}{p-4} = \omega^{\frac{5-p}{2(p-1)}} \left( \omega^{\frac{p-5}{2(p-1)}} + o_{\omega \to 0} \left( \omega^{\frac{p-5}{2(p-1)}} \right) \right) = 1 + o_{\omega \to 0}(1)
    %\]
    %by using \eqref{eqMassConvOrder}, removing the term $(b_\omega + 1)^{(p-5)/(p-1)}$ and doing a Taylor expansion of $b_\omega$. Thus, there exists $\mu > 0$ such that
    %\begin{equation} \label{eqMassLimSubCrit}
    %    M(\phi_\omega) \to \mu \text{ as } \omega \to \infty.
    %\end{equation}}
    
    For $p > 5$, we separate the integral in two terms: an integral between $b_\omega$ and $b_\omega + 1$, and an integral between $b_\omega + 1$ and $\infty$. Observe that
    \[
        \omega^\frac{5-p}{2(p-1)} \int_{b_\omega + 1}^\infty \sinh(y)^{-\frac{4}{p-1}} dy \to 0 \text{ as } \omega \to 0.
    \]
    Furthermore, by Taylor expansion of $\sinh$, we have
    \[
        \int_{b_\omega}^{b_\omega + 1} (2 \sinh (y))^{-\frac{4}{p-1}} dy
        = \int_{b_\omega}^{b_\omega + 1} y^{-\frac{4}{p-1}} (1 + \epsilon(y))^{-\frac{4}{p-1}} dy
    \]
    where $\epsilon(y) = o_{y \to 0}(1)$. Furthermore, we have
    \begin{equation} \label{eqMassConvOrder}
        \int_{b_\omega}^{b_\omega + 1} y^{-\frac{4}{p-1}} dy = C \left( (b_\omega + 1)^\frac{p-5}{p-1} - b_\omega^\frac{p-5}{p-4} \right),
    \end{equation}
    where $C > 0$. We get
    \[
        \omega^{\frac{5-p}{2(p-1)}} (b_\omega + 1)^\frac{p-5}{p-1} \to \infty \text{ as } \omega \to 0
    \]
    so
    \begin{equation} \label{eqMassLimSupCrit}
        M(\phi_\omega) \to \infty \text{ as } \omega \to 0.
    \end{equation}
    Using Remark \ref{rkMassCont}, we obtain that it is strictly decreasing and taking values in $(0, \infty)$ by \eqref{eqMassLim}-\eqref{eqMassLimSupCrit}.
    
    For $p = 5$, we have
    \begin{equation} \label{eqPEq5}
        \begin{aligned}
             M(\phi_\omega) & = K \sqrt{3} \int_{b_\omega}^\infty \sinh (y)^{-1} dy  = K \sqrt{3} \left( \ln(e^{b_\omega} + 1) - \ln(e^{b_\omega} - 1) \right)  \\ 
             &= K \sqrt{3} \ln \left( \sqrt{-\frac{l_{\operatorname{H}_i}}{\omega}} \right).
        \end{aligned}
    \end{equation}
    We deduce from this explicit formulation all the properties mentioned in the statement of the proposition, and this concludes the proof.
\end{proof}

\begin{remark}
     By \eqref{eqStarGraphGS1}, for $p = 3$, we get on each edge $e \in \mathcal{E}$
    \begin{align*}
        \| \phi_{e} \|_{L^2{(\R^+)}}^2 = 2\omega \int_0^\infty \sinh\left( \sqrt{\omega} x + b_\omega \right)^{-2} dx = 2\sqrt{\omega} \left(\frac{1}{\tanh(b_\omega)}  - 1 \right)
    \end{align*}
    By \eqref{eqBOmega}, we obtain
    \begin{equation*}
         \| \phi_{e} \|_{L^2{(\R^+)}}^2 = 2 \left( \sqrt{-l_{\H_i}} - \sqrt{\omega} \right). 
    \end{equation*}
    Consequently, for $|\mathcal{E}| = K$, we obtain 
    \begin{equation*}
        \| \phi_\omega \|_{L^2(\G)}^2 = K  \| \phi_{e} \|_{L^2{(\R^+)}}^2 = 2K \left(\frac{|\gamma|}{K} - \sqrt{\omega}\right)
    \end{equation*}
    for the $\delta$-vertex conditions and, in the same way, 
    \begin{equation*}
           \| \phi_\omega \|_{L^2(\G)}^2 = 2K \left(\frac{K}{|\gamma|} - \sqrt{\omega}\right)
    \end{equation*}
    for the $\delta'$-vertex condition. Thus, for the $\delta$-vertex conditions, the maximum mass allowed is $2|\gamma|$, and is independent of the number of edges. In contrast, for the $\delta'$-vertex conditions, the maximum mass is given by $2K^2/|\gamma|$, which increases as the number of edges grows. 
    
    Furthermore, we explicitly computed the masses for $p = 5$ for both $\delta$ and $\delta'$-vertex conditions in \eqref{eqPEq5}. Figure \ref{fig2} illustrates the different behaviors for $p=3, 5, 7$, where $p = 7$ is computed numerically. 
\end{remark}

\begin{figure}[H]
    \centering
    \begin{minipage}{0.45\textwidth}
        \centering
        \includegraphics[height=0.25\textheight]{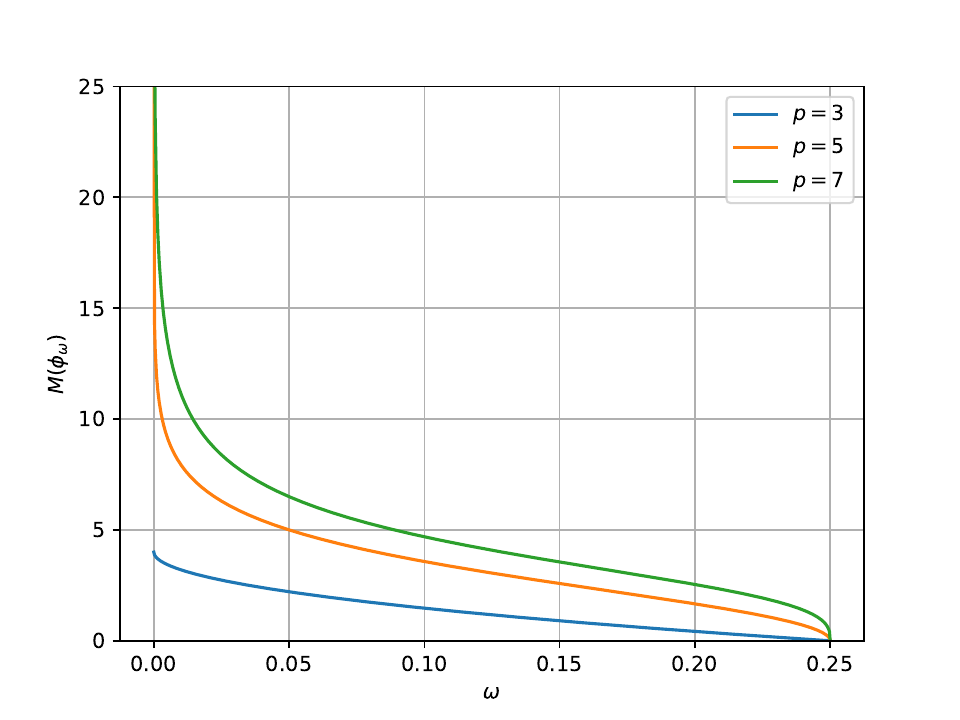}
    \end{minipage}%
    \begin{minipage}{0.45\textwidth}
        \centering
        \includegraphics[height=0.25\textheight]{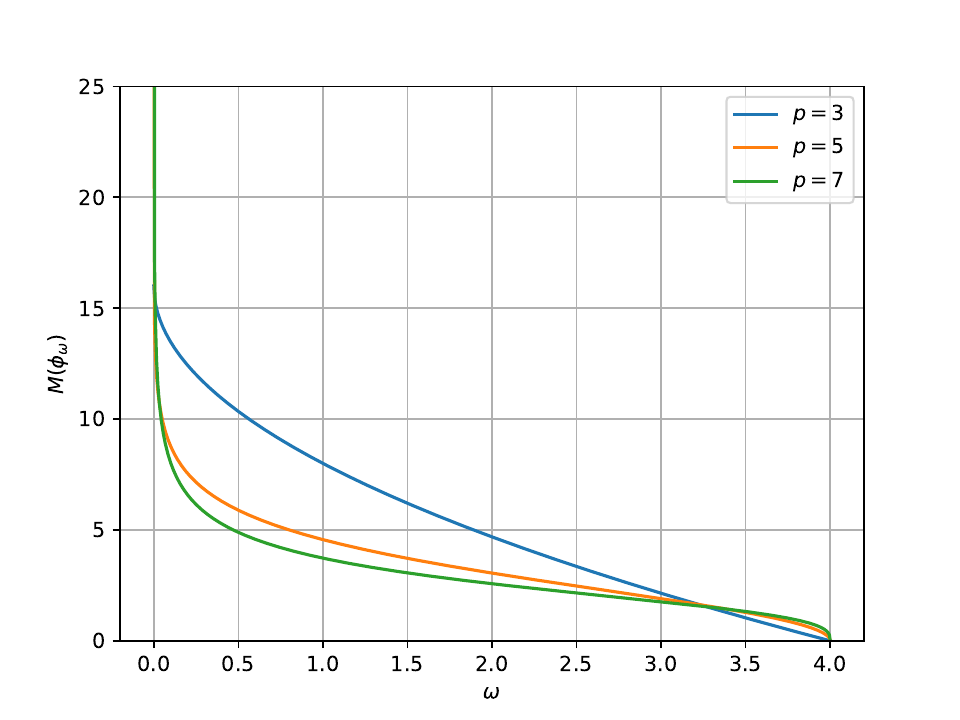}
    \end{minipage}
    \caption{Mass of the ground state for $\delta$-type condition (left) and $\delta'$-type condition (right) for $p=3,5,7$, $K = 4$, $\gamma = -2$.}
    \label{fig2}
\end{figure}

\bibliographystyle{abbrv}
\bibliography{biblio}

\end{document}